\theoremstyle{definition}
\theoremstyle{plain}
\newtheorem{Thm}{Theorem}
\newtheorem{Lem}[Thm]{Lemma}
\newtheorem{Cor}[Thm]{Corollary}
\newtheorem{Prop}[Thm]{Proposition}
\newtheorem{Examp}[Thm]{Example}
\numberwithin{equation}{section}	
\numberwithin{figure}{Prob}
\numberwithin{table}{Prob}
\numberwithin{Thm}{section}
\newcommand{\Z}{\mathbb{Z}}
\newcommand{\C}{\mathbb{C}}
\newcommand{\normal}{\trianglelefteq}
\DeclareMathOperator*{\Aut}{Aut}
\renewcommand{\S}{\mathfrak{S}}
\newcommand{\SL}{\text{SL}}
\title{Almost Commutative Terwilliger Algebras II: Strong Gelfand Pairs}
\author{Nicholas L. Bastian, Stephen P. Humphries}
\date{\today}
\begin{document}

\maketitle

\begin{abstract}
Terwilliger algebras are a subalgebra of a matrix algebra constructed from an association scheme. In 2010, Tanaka defined what it means for a Terwilliger algebra to be almost commutative and gave five equivalent conditions for a Terwilliger algebra to be almost commutative. In this paper we look at Terwilliger algebras coming from strong Gelfand pairs $(G,H)$ for a finite group $G$. From such a pair, one can create a Terwilliger algebra using the Schur ring of $H-$classes of elements of $G$. We determine all strong Gelfand pairs that give an Almost Commutative Terwilliger algebra.      \end{abstract}

\textbf{Keywords}:
Terwilliger algebra, Camina group, wreath product of association scheme, Schur ring, Frobenius group, strong Gelfand pair, centralizer algebra\\

\textbf{MSC 2020 Classification}: 05E30, 05E16

\section{Introduction}

Terwilliger algebras were originally developed in the 1990's by Paul Terwilliger to study commutative association schemes. In particular, he looked at P-Polynomial and Q-Polynomial association schemes. In \cite{Terwilliger1,Terwilliger2,Terwilliger3}, where Terwilliger algebras were introduced, Terwilliger finds a combinatorial characterization of thin P-Polynomial and Q-Polynomial association schemes. 
 
Tanaka studied a situation in which the Terwilliger algebra has a particular Wedderburn structure (see Theorem \ref{thm:tanaka}) called almost commutative. One of Tanaka's results is that when a Terwilliger algebra is almost commutative, the corresponding association scheme is a wreath product. In \cite{Bastian2}, we determined when a group association scheme results in an almost commutative Terwilliger algebra (see Theorem \ref{thm:acclassify}). This corresponds to the strong Gelfand pair $(G,G)$. A natural generalization of this is to consider any strong Gelfand pair $(G,H)$, since then the $H-$classes $g^H$ determine a commutative association scheme. 

 In this paper all groups $G$ are finite. We let $\mathcal{G}(G)=Z(\C[G])=\C[G]^G$ denote the group association scheme and we let $T(G)$ denote the Terwilliger algebra for $\mathcal{G}(G)$ with base point $e$. We let $\mathcal{K}_n$ denote the trivial association scheme for a set of size $n$. Given a strong Gelfand pair $(G,H)$, we let $T(G,\C[G]^H)$ denote the Terwilliger algebra resulting from the $H-$class Schur ring $\C[G]^H$. The main result of this paper, which uses results of \cite{Bastian2}, is 

 \begin{Thm}
    Let $G$ be a finite group and $H\lneq G$. Then $(G,H)$ is a strong Gelfand pair and $T(G,\C[G]^H)$ is almost commutative if and only if
    \begin{enumerate}
        \item  $G$ is an abelian group and $H$ is any proper subgroup of $G$. In this case, $\dim T(G,\C[G]^H)=|G|^2$.
        \item $G$ is a Frobenius group with Frobenius kernel $H$ and cyclic complement $\Z_k$ such that $H$ is either an abelian group or a Camina $p-$group. In this case, the corresponding association scheme is $\mathcal{G}(H)\wr \mathcal{G}(G/H)$. Additionally, if $H$ has $m$ conjugacy classes then 
        \[\dim T(G,\C[G]^H)=\dim T(H)+(k-1)(k-2+3m).\]
    \end{enumerate}
\end{Thm}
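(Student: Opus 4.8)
The whole argument is most naturally phrased through the $H$-class association scheme $\mathfrak{X}$ on vertex set $G$, whose adjacency matrices span the Schur ring $\C[G]^H$. The pair $(G,H)$ is a strong Gelfand pair exactly when every restriction $\chi|_H$ is multiplicity-free, which is equivalent to $\C[G]^H$ being commutative, i.e.\ to $\mathfrak{X}$ being a commutative association scheme (cf.\ \cite{Karlof}). So throughout we may assume $\mathfrak{X}$ is commutative and ask only when $T=T(G,\C[G]^H)$ is almost commutative. By Theorem~\ref{thm:tanaka} this holds precisely when $\mathfrak{X}$ is trivial or decomposes as a wreath product, so the entire argument revolves around producing, or exploiting, such a decomposition and feeding its inner factor into the $(G,G)$ classification of Theorem~\ref{thm:acclassify}.

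For the reverse direction I would verify the two families directly. In case~(1), $G$ abelian forces every $H$-class to be a singleton, so $\mathfrak{X}=\mathcal{G}(G)$ and $T(G,\C[G]^H)=T(G)$, which by the abelian case of Theorem~\ref{thm:acclassify} is the full matrix algebra $M_{|G|}(\C)$, hence almost commutative of dimension $|G|^2$; the strong Gelfand property is immediate since all irreducible characters are linear. In case~(2), the Frobenius hypothesis gives $C_H(g)=1$ for every $g\in G\setminus H$, so (using $H\normal G$) each nontrivial coset $gH$ is a single $H$-class of size $|H|$; consequently the $H$-class of $x^{-1}y$ depends only on the cosets $xH,yH$ when $x,y$ lie in different cosets, which is exactly the statement $\mathfrak{X}=\mathcal{G}(H)\wr\mathcal{G}(G/H)$ with $G/H\cong\Z_k$. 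Commutativity of $\mathfrak{X}$ (hence the strong Gelfand property) is automatic because both wreath factors are commutative group schemes. Almost commutativity follows by feeding ``$H$ abelian or a Camina $p$-group'' into Theorem~\ref{thm:acclassify} to see the inner factor $\mathcal{G}(H)$ is almost commutative, and then noting that wreathing on the thin factor $\mathcal{G}(\Z_k)$ again satisfies the criterion of Theorem~\ref{thm:tanaka}. The dimension formula then comes from the known description of the Terwilliger algebra of a wreath product \cite{WreathProd,WreathProduct1}, the summand $\dim T(H)$ arising from the identity block and the term $(k-1)(k-2+3m)$ from the interaction of the $k-1$ nontrivial cosets with the $m$ conjugacy classes of $H$.

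For the forward direction, assume $(G,H)$ is strong Gelfand and $T$ is almost commutative. Theorem~\ref{thm:tanaka} gives that $\mathfrak{X}$ is a wreath product (if the decomposition is degenerate, a size count $|g^H|=|G|-1\le |H|\le|G|/2$ forces $|G|\le 2$, landing in case~(1)). Since $\mathfrak{X}$ is a translation scheme for $G$, its closed subsets correspond to subgroups that are unions of $H$-classes; $H$ is always such a subgroup, with inner scheme $\mathcal{G}(H)$, and the matching outer factor is the \emph{regular} (thin) scheme of $G/H$. Here a clean point emerges: commutativity of $\mathfrak{X}$ forces that regular outer factor to be commutative, i.e.\ $G/H$ abelian, and an abelian Frobenius complement is cyclic, so $G/H\cong\Z_k$ once the Frobenius shape is known. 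The structural work is to show the wreath is realized by $H$ itself, that $H\normal G$, and that outer-independence forces each nontrivial coset to be a single $H$-class, i.e.\ $C_H(g)=1$ for all $g\notin H$ --- which is exactly $G$ being Frobenius with kernel $H$. (This simultaneously excludes the $p$-group alternative for Camina pairs, since in a $p$-group a nontrivial $H\normal G$ meets $Z(G)$, giving a nonidentity element of $C_H(g)$ for every $g$.) Finally, almost commutativity of the inner factor $\mathcal{G}(H)$ gives, via Theorem~\ref{thm:acclassify}, that $H$ is abelian or a Camina $p$-group.

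The main obstacle is this structural extraction in the forward direction: converting the abstract wreath decomposition supplied by Theorem~\ref{thm:tanaka} into the precise group-theoretic statements that the relevant closed subset is the given $H$, that $H\normal G$, and that the Camina coset condition holds. The payoff of isolating it is that the two remaining constraints become transparent --- the cyclicity of the complement is forced by commutativity (the regular outer factor of a wreath is noncommutative as soon as $G/H$ is nonabelian), and the inner structure is forced by Theorem~\ref{thm:acclassify} --- so that the delicate part is really pinning the inner block to $H$ and deducing normality from the scheme-theoretic wreath condition.
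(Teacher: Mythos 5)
Your reverse direction is essentially sound and in one respect slicker than the paper's: the paper verifies almost commutativity directly from the Schur-ring product criterion (Corollary \ref{lem:sgpac}) and only later, in Section 4, computes adjacency matrices to obtain the wreath decomposition (Corollary \ref{cor:sgpwreath}), whereas you get both at once from $C_H(g)=1$ together with Tanaka's condition (5). Your exclusions of the Camina $p$-group case (a nontrivial normal subgroup of a $p$-group meets the center) and of the $Q_8$ complement (because $G/H$ must be abelian and an abelian Frobenius complement is cyclic) are also cleaner than the case analyses in Proposition \ref{prop:frobca}. Still, two things are missing even here: you never invoke Thompson's theorem that Frobenius kernels are nilpotent, which is what cuts the four cases of Theorem \ref{thm:acclassify} down to ``abelian or Camina $p$-group'' (the Frobenius groups $(\Z_p)^r\rtimes\Z_{p^r-1}$ and $\Z_3^2\rtimes Q_8$ must be ruled out as possible kernels $H$); and the dimension formula is asserted by citation rather than derived, whereas the paper counts the nonzero intersection numbers $p_{ij}^k$ by hand using triple regularity (Proposition \ref{prop:sgpdim}).

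The genuine gap is the forward direction, where you explicitly flag the ``structural extraction'' as the main obstacle and then do not perform it; this is where essentially all of the paper's work lies. Concretely: (i) it is not automatic that the chain of closed subsets of the wreath decomposition supplied by Tanaka passes through $H$ --- a priori the decomposition could cut across $H$, and you give no argument identifying $H$ with one of the partial wreath factors; (ii) normality of $H$ is obtained in the paper (Theorem \ref{thm:sgpequiv1}) by a careful case analysis with the product criterion $P_iP_j=P_h$, not from the wreath structure; (iii) the key identity $x^H=xH$ for $x\notin H$ (Theorem \ref{thm:sgpequiv}) is the hardest step: the paper needs $T(H)$ AC to make $H$ a Con-cos group, uses solvability of $H$ to produce $z\in H\setminus H'$ with $gz\in g^H$, and reduces modulo $H'$ to the abelian case (Theorem \ref{thm:sgpequivab}, Proposition \ref{prop:sgpmod}); nothing in your sketch substitutes for this. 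Note also that your $p$-group exclusion is logically downstream of (iii): you use $C_H(g)=1$ to rule out $p$-groups, but $C_H(g)=1$ is precisely what (iii) establishes. As written, the forward direction is a plan for a proof rather than a proof.
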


The situation where $(G,H)$ is a strong Gelfand pair and $H$ is abelian has been considered in \cite{Humphries2}, where it is referred to as an extra strong Gelfand pair.

This paper is structured as follows. Section $2$ will discuss some preliminary definitions and results. In Section $3$ we will determine a classification of Almost Commutative Terwilliger algebras for strong Gelfand pairs (Theorem \ref{thm:sgpclassify}). Then we determine the dimension of the corresponding Terwilliger algebra (Proposition \ref{prop:sgpdim}). In Section $4$ we determine how to write an association scheme coming from a strong Gelfand pair as a wreath product (Corollary \ref{cor:sgpwreath}). One can then find the Wedderburn components of each of the Terwilliger algebras using results from \cite{WreathProduct1}.

Some notation that will be used throughout includes: given elements $g,h\in G$, we let $g^h=h^{-1}gh$ and $[g,h]=g^{-1}h^{-1}gh$. Furthermore for $H\leq G$ we let $g^H=\{h^{-1}gh\colon h\in H\}$. Also for $H,K\leq G$, $[K,H]=\langle [k,h]\colon k\in K,h\in H\rangle$.\\ 
\textbf{Acknowledgment:} All computations made in the preparation of this paper were performed using Magma \cite{Magma}.

\section{Preliminaries}

For definitions of association schemes, Schur rings, and Terwilliger algebras we refer to \cite{Bastian2}. We remind the reader of the following definition and results also found in \cite{Bastian2}: A Terwilliger algebra $T(x)$ is \emph{almost commutative} (AC) if every non-primary irreducible $T(x)-$module is $1-$ dimensional. There is a classification of such Terwilliger algebras when the underlying association scheme is commutative:

\begin{Thm}[Tanaka, \cite{Tanaka}]\label{thm:tanaka}  Let $\mathcal{A}=(\Omega,\{R_i\}_{0\leq i\leq d})$ be a commutative association scheme. Let $T(x)$ be the Terwilliger algebra of $\mathcal{A}$ for some $x\in \Omega$. The following are equivalent:
    \begin{enumerate}
        \item Every non-primary irreducible $T(x)-$module is $1-$ dimensional for some $x\in \Omega$.
        \item Every non-primary irreducible $T(x)-$module is $1-$ dimensional for all $x\in \Omega$.
        \item The $p_{ij}^h$ satisfy: for all distinct $h,i$ there is exactly one $j$ such that $p_{ij}^h\neq 0$ $(0\leq h,i,j\leq d)$.
        \item The $q_{ij}^h$ satisfy: for all distinct $h,i$ there is exactly one $j$ such that $q_{ij}^h\neq 0$ $(0\leq h,i,j\leq d)$.
        \item $\mathcal{A}$ is a wreath product of association schemes $\mathcal{A}_1,\mathcal{A}_2,\cdots, \mathcal{A}_n$ where each $\mathcal{A}_i$ is either a $1-$class association scheme or the group scheme of a finite abelian group.
    \end{enumerate} 
Moreover, a Terwilliger algebra satisfying these equivalent conditions is triply regular.
\end{Thm}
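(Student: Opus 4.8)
The plan is to prove the cycle $(1)\Rightarrow(3)\Rightarrow(5)\Rightarrow(1)$, to obtain $(4)$ from a duality internal to $T(x)$, to deduce $(1)\Leftrightarrow(2)$ from the base-point independence of $(3)$, and finally to read triple regularity off the wreath structure of $(5)$. Throughout I would use that $T(x)$ is a semisimple $\C$-algebra, being closed under conjugate-transpose, so that $\C^\Omega$ decomposes as an orthogonal direct sum of irreducible $T(x)$-modules, and that $T(x)$ is generated by the Bose--Mesner algebra $M=\Span\{A_i\}$ together with the dual idempotents $E_i^{*}=E_i^{*}(x)$, where $E_i^{*}$ is the diagonal projection onto the $i$-th subconstituent $\{y:(x,y)\in R_i\}$. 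The dictionary I would set up first is the standard one: the block $E_h^{*}A_iE_j^{*}$ is the restriction of $A_i$ to rows in subconstituent $h$ and columns in subconstituent $j$, and this block is nonzero exactly when the triple $(h,i,j)$ is feasible, i.e.\ when the associated intersection number does not vanish. Since $\sum_j p_{ij}^{h}=k_i>0$, condition $(3)$ is not an existence statement but a \emph{uniqueness} statement: for distinct $h,i$ there is always at least one feasible $j$, and $(3)$ asks that it be unique.

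For $(1)\Rightarrow(3)$ I would argue at the level of modules. A $1$-dimensional $T(x)$-module is spanned by a common eigenvector $v$ of all of $T(x)$; since the $E_i^{*}$ are orthogonal idempotents summing to $I$, exactly one $E_i^{*}$ fixes $v$, so $v$ is supported on a single subconstituent $i$ and satisfies $A_kv\in\C v$ for every $k$, which forces $E_j^{*}A_kE_i^{*}$ to annihilate $v$ for all $j\neq i$. Failure of $(3)$ supplies distinct $h,i$ and indices $j_1\neq j_2$ with $p_{ij_1}^{h},p_{ij_2}^{h}\neq0$; I would show that the resulting pair of nonzero off-diagonal blocks produces an irreducible summand $W$ with $\dim E_{j}^{*}W\geq1$ for at least two values of $j$, hence $\dim W\geq2$, and a rank count against the (thin, $(d+1)$-dimensional) primary module shows such a $W$ may be taken non-primary, contradicting $(1)$. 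The return arrow $(5)\Rightarrow(1)$ is the computational anchor: for a wreath product of the type in $(5)$ the Terwilliger algebra decomposes explicitly by the wreath-product results of \cite{WreathProduct1,WreathProd}, and direct inspection shows each non-primary irreducible summand is supported on a single layer and is $1$-dimensional, because every factor is either a $1$-class scheme $\mathcal{K}_n$ with no nontrivial local structure, or an abelian group scheme all of whose intersection numbers lie in $\{0,1\}$.

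The heart of the argument, and the step I expect to be hardest, is $(3)\Rightarrow(5)$: recovering the wreath tower from the sparsity of the $p_{ij}^{h}$. Here I would read $(3)$ as a determinism principle: once $(x,y)\in R_h$ is fixed with $h\neq i$, all $R_i$-neighbours of $x$ bear the \emph{same} relation to $y$. This lets me define a partition of the index set $\{0,\dots,d\}$ into layers and a coarsest nontrivial equivalence on $\Omega$ that every $A_k$ respects; I would peel off the outermost layer as a wreath factor, check that the induced structure on a block is again a commutative scheme satisfying $(3)$, and induct on the number of classes. The delicate points are (i) proving the partition is equitable and linearly ordered, so that the assembly is genuinely a wreath product and not a more general fusion, and (ii) identifying each factor: a layer admitting no further refinement yields $\mathcal{K}_n$, while a layer that does refine is pinned down by $(3)$ together with commutativity to have all intersection numbers in $\{0,1\}$ and to be regular, which forces it to be the group scheme of a finite abelian group.

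It remains to fold in $(4)$, $(2)$, and triple regularity. For $(4)$ I would exploit that $T(x)$ is generated equally by the primitive idempotents $E_i$ of $M$ and the dual adjacency matrices $A_i^{*}(x)$, so that the Krein parameters $q_{ij}^{h}$ control the products $E_hA_i^{*}E_j$ exactly as the $p_{ij}^{h}$ control $E_h^{*}A_iE_j^{*}$; since a $1$-dimensional module is supported on a single idempotent $E_i$ as well as a single $E_i^{*}$, running the module argument of $(1)\Leftrightarrow(3)$ in the dual basis gives $(1)\Leftrightarrow(4)$ directly, with no need for the dual of $\mathcal{A}$ to itself be a scheme. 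For $(1)\Leftrightarrow(2)$ I would note that $(3)$ and $(4)$ involve only the base-point-free parameters $p_{ij}^{h},q_{ij}^{h}$, so $(1)$ at one point yields $(3)$, which yields $(1)$ at every point, i.e.\ $(2)$. Finally, triple regularity follows from $(5)$: in a wreath product of $1$-class and abelian group schemes the number of points at prescribed relations to three fixed points depends only on the relation-type triangle, which I would verify layer by layer using the block structure established in $(3)\Rightarrow(5)$.
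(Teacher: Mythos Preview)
The paper does not prove this theorem: it is stated with the attribution ``Tanaka, \cite{Tanaka}'' and is quoted from the literature as background, with no proof or sketch given in the paper itself. Consequently there is no ``paper's own proof'' to compare your proposal against.

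As a standalone outline of Tanaka's argument your plan is broadly sensible---the cycle $(1)\Rightarrow(3)\Rightarrow(5)\Rightarrow(1)$, the duality for $(4)$, and the base-point-free observation for $(1)\Leftrightarrow(2)$ are the natural moves---but what you have written is a strategy rather than a proof. In particular, the step $(3)\Rightarrow(5)$ is only gestured at: you assert that the uniqueness condition on the $p_{ij}^h$ induces a linearly ordered partition of the index set into layers that peel off as wreath factors, but you do not actually construct that partition, prove it is linearly ordered, or verify that each irreducible factor is forced to be either $\mathcal{K}_n$ or an abelian group scheme. Similarly, in $(1)\Rightarrow(3)$ the claim that two nonzero off-diagonal blocks force a non-primary irreducible of dimension at least $2$ needs a genuine argument separating the offending submodule from the primary module. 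None of this is wrong in spirit, but if you intend this as an independent proof you would need to fill these in; if you intend it as a summary of what the paper does, be aware that the paper simply cites the result.
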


 We now give a sixth equivalent condition for a Terwilliger algebra to be AC in the case when the association scheme comes from a commutative Schur ring.

 \begin{Thm}\label{thm:sequiv}[Theorem 2.2, \cite{Bastian2}]
    Let $G$ be a finite group. Let $\S$ be any commutative Schur ring for $G$ with principal sets $P_i$, $0\leq i\leq d$. Let $T(G,\S)$ be the Terwilliger algebra of $G$ with respect to $\S$ and with base point $e$. Then $T(G,\S)$ is AC if and only if $\S$ has the following property: for all $x,y\in G$ if $x\in P_i$, $y\in P_j$, $xy\in P_h$, and $P_i\neq P_j^*$ then $P_iP_j=P_h$. 
\end{Thm}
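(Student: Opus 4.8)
The plan is to show that the stated Schur-ring property is nothing more than a reformulation, in the language of $\S$, of Tanaka's intersection-number condition (3) in Theorem \ref{thm:tanaka}, and then to quote the equivalence of (3) with almost commutativity to finish. Since $\S$ is a commutative Schur ring, the relations $R_i=\{(x,y):x^{-1}y\in P_i\}$ form a commutative association scheme on $G$, so Theorem \ref{thm:tanaka} is available. The entire content is therefore a dictionary between the structure constants of $\S$ and the intersection numbers $p_{ij}^h$, together with some bookkeeping involving the inversion $P_i\mapsto P_i^*$.

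First I would record the dictionary. With the convention above, for $(x,y)\in R_h$ one has $p_{ij}^h=|\{u\in P_i: u^{-1}(x^{-1}y)\in P_j\}|$, which is the number of factorizations of the fixed element $g=x^{-1}y\in P_h$ as a product $g=uv$ with $u\in P_i$ and $v\in P_j$. A defining feature of a Schur ring is that this count depends only on the class $P_h$ containing $g$; hence $p_{ij}^h\neq 0$ if and only if some (equivalently every) element of $P_h$ factors through $P_iP_j$, i.e.\ $P_h\subseteq P_iP_j$. Rewriting $v=u^{-1}g$ gives the symmetric form $p_{ij}^h\neq 0 \iff P_j\subseteq P_i^*P_h$. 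Because the principal sets partition $G$ and $P_i^*P_h$ is a union of principal sets (it is the support of $\overline{P_i^*}\,\overline{P_h}$), the set $\{j:p_{ij}^h\neq 0\}$ is exactly the set of indices of the principal sets occurring in $P_i^*P_h$. In particular, ``there is exactly one $j$ with $p_{ij}^h\neq 0$'' is equivalent to ``$P_i^*P_h$ is a single principal set.''

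With this in hand, Tanaka's condition (3) translates into: for all $h,i$ with $P_i\neq P_h$, the product $P_i^*P_h$ is a single principal set. Substituting $i\mapsto i^*$ (a bijection on indices, using $P_{i^*}^*=P_i$) and renaming $h$ as $j$, the product $P_i^*P_h$ becomes $P_iP_j$ and the hypothesis $P_{i^*}\neq P_j$ becomes $P_i^*\neq P_j$, i.e.\ $P_i\neq P_j^*$. Thus (3) is equivalent to: $P_iP_j$ is a single principal set whenever $P_i\neq P_j^*$. Finally, since any $xy$ with $x\in P_i$, $y\in P_j$ lies in $P_iP_j$, saying that $P_iP_j$ is a single principal set is the same as saying $P_iP_j=P_h$, where $P_h$ is the class containing $xy$; this is exactly the property in the statement. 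Combining with the equivalence (1)$\Leftrightarrow$(3) of Theorem \ref{thm:tanaka} (almost commutativity $\Leftrightarrow$ (3)) then yields the result.

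I expect the main obstacle to be careful bookkeeping rather than any deep idea. One must fix the relation convention and the resulting formula for $p_{ij}^h$; verify, via the constancy of Schur-ring factorization counts, that $p_{ij}^h\neq0$ really is equivalent to the set-containment $P_j\subseteq P_i^*P_h$ (so that counting the classes appearing in a set product is legitimate); and check that the diagonal case excluded by Tanaka ($h=i$, i.e.\ $P_i=P_h$) corresponds under the substitution $i\mapsto i^*$ to the excluded case $P_i=P_j^*$. Commutativity of $\S$ enters only to guarantee that the underlying association scheme is commutative, so that Theorem \ref{thm:tanaka} applies.
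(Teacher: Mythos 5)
Your argument is correct: the dictionary $p_{ij}^h\neq 0\iff P_h\subseteq P_iP_j\iff P_j\subseteq P_i^*P_h$ is valid precisely because $\S$ is a Schur ring (so factorization counts are constant on principal sets and set-products are unions of principal sets), the reindexing $i\mapsto i^*$, $h\mapsto j$ correctly turns Tanaka's condition (3) into the stated property, and commutativity of $\S$ is exactly what makes Theorem \ref{thm:tanaka} applicable. Note that this paper does not actually prove the statement --- it is quoted from the companion paper \cite{Bastian2} --- but your translation of Tanaka's intersection-number condition into Schur-ring language is the natural route to it and matches how the result is used throughout (via Theorem \ref{thm:tanaka}), so I see no gap.
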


A pair $(G,H)$, $H\leq G$ is called a \emph{strong Gelfand pair} (SGP) if the sets $P_i=g_i^H=\{g_i^h\colon h\in H\}$ form a commutative Schur ring. We let $\C[G]^H$ denote the resulting commutative Schur ring.

We note that strong Gelfand pairs have an equivalent definition using character theory. See \cite{Harmonic,Karlof}, for a proof of the equivalence. More information on SGPs can be found in \cite{Aizenbud2,Aizenbud1,Humphries2, Humphries1,Humphries3}. We shall need:

\begin{Lem}\label{lem:sgpmod}[Lemma 3.2, \cite{Humphries1}]
    Suppose $N\leq H\leq G$ with $N\normal G$. If $(G,H)$ is a SGP, then $(G/N,H/N)$ is a SGP. 
\end{Lem}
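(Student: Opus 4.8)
The plan is to use the character-theoretic description of strong Gelfand pairs. Recall (see \cite{Harmonic, Karlof}) that $(G,H)$ is a SGP if and only if every irreducible character of $G$ restricts to $H$ without multiplicity, that is, $\langle \psi|_H, \chi\rangle_H \leq 1$ for all $\psi \in \mathrm{Irr}(G)$ and all $\chi \in \mathrm{Irr}(H)$. Since $N\normal G$ and $N\leq H$, inflation along the projection $\pi\colon G\to G/N$ gives a bijection between $\mathrm{Irr}(G/N)$ and the set of $\psi\in\mathrm{Irr}(G)$ with $N\leq\ker\psi$, and likewise between $\mathrm{Irr}(H/N)$ and the $\chi\in\mathrm{Irr}(H)$ with $N\leq\ker\chi$. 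So an arbitrary pair $(\bar\psi,\bar\chi)\in\mathrm{Irr}(G/N)\times\mathrm{Irr}(H/N)$ is the inflation of a unique pair $(\psi,\chi)\in\mathrm{Irr}(G)\times\mathrm{Irr}(H)$ on which $N$ acts trivially.

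First I would record the compatibility of inflation with restriction: since $N\leq H$, restricting the inflation of $\bar\psi$ to $H$ yields exactly the inflation to $H$ of $\bar\psi|_{H/N}$, both being the function $h\mapsto\bar\psi(hN)$. Then I would compute the two relevant inner products and show they coincide:
\[
\langle \bar\psi|_{H/N},\bar\chi\rangle_{H/N}=\frac{|N|}{|H|}\sum_{hN\in H/N}\bar\psi(hN)\,\overline{\bar\chi(hN)}=\frac{1}{|H|}\sum_{h\in H}\psi(h)\,\overline{\chi(h)}=\langle \psi|_H,\chi\rangle_H ,
\]
where the middle equality uses that $\psi$ and $\chi$ are constant on cosets of $N$, so the sum over $H$ is $|N|$ times the sum over $H/N$. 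Consequently the multiplicity of $\bar\chi$ in $\bar\psi|_{H/N}$ equals the multiplicity of $\chi$ in $\psi|_H$.

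To finish, I would invoke the hypothesis: since $(G,H)$ is a SGP we have $\langle \psi|_H,\chi\rangle_H\leq 1$ for the inflated pair $(\psi,\chi)$, hence $\langle \bar\psi|_{H/N},\bar\chi\rangle_{H/N}\leq 1$; as $(\bar\psi,\bar\chi)$ was arbitrary, every irreducible of $G/N$ restricts to $H/N$ multiplicity-freely and $(G/N,H/N)$ is a SGP. The only point requiring genuine care will be the inflation-restriction compatibility together with the inner-product identity above; everything else is bookkeeping on the inflation bijections. Alternatively, one could argue combinatorially via the Schur ring definition: because $N\normal G$, each $H$-class of $G$ maps under $\pi$ onto an $H/N$-class of $G/N$, so the principal sets of the candidate Schur ring $\C[G/N]^{H/N}$ are precisely the images $\pi(P_i)$ of the principal sets $P_i$ of $\C[G]^H$; since $N$ is itself a union of principal sets, $\pi$ induces a surjective algebra homomorphism $\C[G]^H\to\C[G/N]^{H/N}$, and the image of a commutative Schur ring under such a map is again a commutative Schur ring.
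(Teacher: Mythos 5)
Your argument is correct. Note that the paper does not prove this lemma at all --- it is imported verbatim as Lemma~3.2 of \cite{Humphries1} --- so there is no internal proof to compare against; what can be said is that your route goes through the character-theoretic characterization of strong Gelfand pairs (multiplicity-free restriction of each $\psi\in\mathrm{Irr}(G)$ to $H$), which the paper acknowledges as equivalent via \cite{Harmonic,Karlof} but never uses directly, whereas the paper's working definition of SGP is that the $H$-classes $g^H$ span a commutative Schur ring. Your main argument is complete: inflation along $G\to G/N$ and $H\to H/N$ is a multiplicity-preserving bijection onto the irreducibles with $N$ in their kernel, inflation commutes with restriction because $N\leq H$, and the displayed inner-product identity correctly reduces $\langle\bar\psi|_{H/N},\bar\chi\rangle_{H/N}$ to $\langle\psi|_H,\chi\rangle_H\leq 1$. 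Your alternative combinatorial sketch is actually the one most in the spirit of this paper, and it also goes through: $\pi(g^H)=(gN)^{H/N}$, the fibres of $\pi$ over a single $H/N$-class all have the same cardinality (as $H$ permutes them transitively), so $\pi(\overline{g^H})$ is an integer multiple of $\overline{(gN)^{H/N}}$ and the linear extension of $\pi$ maps $\C[G]^H$ onto $\C[G/N]^{H/N}$ as a surjective algebra homomorphism; commutativity then passes to the quotient. The only point worth making explicit in that second argument is that $\C[G/N]^{H/N}$ is automatically a Schur ring (it is the transitivity module of the conjugation action of $H/N$), so the sole property needing transfer is commutativity, which the surjection provides. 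Either version is a valid and self-contained proof of the lemma.
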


A group $G$ is a \emph{Frobenius group} if there is a non-trivial proper subgroup $N\normal G$ such that if $e\neq x\in N$ then $C_G(x)\leq N$. We call $N$ the \emph{Frobenius kernel} of $G$. Several equivalent conditions are:

\begin{Prop}[\cite{Isaacs}, p. 121]\label{prop:frobeq}
    Let $N\normal G$, $H\leq G$ with $NH=G$ and $N\cap H=\{e\}$. Then the following are equivalent.
    \begin{enumerate}
        \item $C_G(n)\leq N$ for all $n\in N\setminus \{e\}$.
        \item $C_H(n)=\{e\}$ for all $n\in N\setminus \{e\}$.
        \item $C_G(h)\leq H$ for all $h\in H\setminus \{e\}$.
        \item Every $x\in G\setminus N$ is conjugate to an element of $H$.
        \item If $h\in H\setminus \{e\}$, then $h$ is conjugate to every element of $hN$.
        \item $G$ is a Frobenius group with Frobenius kernel $N$.
    \end{enumerate}
\end{Prop}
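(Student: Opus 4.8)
The plan is to fix the internal semidirect decomposition $G=NH$ (legitimate since $N\normal G$, $NH=G$, and $N\cap H=\{e\}$), so that every $g\in G$ factors uniquely as $g=ak$ with $a\in N$ and $k\in H$, and then to prove the six conditions equivalent through the single cyclic chain $(1)\Rightarrow(2)\Rightarrow(3)\Rightarrow(5)\Rightarrow(4)\Rightarrow(1)$, together with the observation that $(1)\Leftrightarrow(6)$ holds by the very definition of a Frobenius group with kernel $N$. Most of the arrows reduce to short manipulations with this factorization; the genuine content sits in $(4)\Rightarrow(1)$.

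For the easy arrows: $(1)\Rightarrow(2)$ is immediate, since $C_H(n)\subseteq C_G(n)\cap H\subseteq N\cap H=\{e\}$. For $(2)\Rightarrow(3)$, I would take $g=ak\in C_G(h)$ with $h\in H\setminus\{e\}$ and push the two factors apart: writing $h_1=khk^{-1}\in H$, the relation $ah_1a^{-1}=h$ gives $ah_1a^{-1}h_1^{-1}=hh_1^{-1}$, whose left-hand side lies in $N$ (as $N\normal G$) and whose right-hand side lies in $H$, so both equal $e$; hence $h=h_1$ and $a\in C_N(h)$, and if $a\neq e$ then $h\in C_H(a)$ contradicts $(2)$. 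Thus $a=e$ and $g\in H$. The step $(3)\Rightarrow(5)$ uses that $(3)$ forces $C_N(h)=C_G(h)\cap N\subseteq H\cap N=\{e\}$; the map $N\to hN$, $b\mapsto bhb^{-1}$, is then injective (equal images force $b_1^{-1}b_2\in C_N(h)$) and lands in $hN$ because $h^{-1}bhb^{-1}\in N$, so by finiteness it is a bijection and every element of $hN$ is conjugate to $h$. Finally $(5)\Rightarrow(4)$ follows from the coset partition $G\setminus N=\bigsqcup_{e\neq h\in H}hN$: any $x\notin N$ lies in some $hN$ with $h\neq e$, and $(5)$ says $x$ is conjugate to $h\in H$.

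The main obstacle is $(4)\Rightarrow(1)$, which I would handle by a covering/counting argument showing that the conjugates of $H$ form a trivial-intersection family. Let $U=\bigcup_{g\in G}H^g$. Passing to $G/N$ shows that a non-identity element of $H$ is never conjugate into $N$, so $U\cap N=\{e\}$; combined with $(4)$ this yields $U\setminus\{e\}=G\setminus N$ and hence $|U|=|G|-|N|+1$. On the other hand the union bound gives $|U|\le 1+[G:N_G(H)](|H|-1)\le 1+[G:H](|H|-1)=1+|N|(|H|-1)=|G|-|N|+1$, using $H\subseteq N_G(H)$ and $[G:H]=|N|$. Equality must therefore hold throughout, forcing $N_G(H)=H$ and forcing distinct conjugates of $H$ to meet only in $e$. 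To conclude $(1)$, suppose $n\in N\setminus\{e\}$ and $g\in C_G(n)\setminus N$; conjugating by a $t$ with $tgt^{-1}=h\in H\setminus\{e\}$ (available from $(4)$) reduces to the case of some $h\in H\setminus\{e\}$ centralizing some $n'\in N\setminus\{e\}$, whence $h\in H\cap H^{n'}$ and the trivial-intersection property gives $H=H^{n'}$, i.e. $n'\in N_G(H)=H$, contradicting $n'\in N\setminus\{e\}$. Thus $C_G(n)\subseteq N$. I expect the bookkeeping in this counting step—verifying $U\cap N=\{e\}$ and extracting both $N_G(H)=H$ and the trivial-intersection property from the equality case—to be the delicate part; once the family is known to be trivial-intersection the remaining deductions are routine, and $(1)\Leftrightarrow(6)$ closes the equivalence by definition.
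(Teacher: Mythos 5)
Your proof is correct, but note that the paper itself offers no proof of this proposition: it is quoted verbatim from Isaacs (p.~121) as a known result, so there is no in-paper argument to compare against. Your cyclic chain $(1)\Rightarrow(2)\Rightarrow(3)\Rightarrow(5)\Rightarrow(4)\Rightarrow(1)$ is essentially the standard textbook route, and each arrow checks out: the $N$--$H$ separation trick in $(2)\Rightarrow(3)$ (comparing $ah_1a^{-1}h_1^{-1}\in N$ with $hh_1^{-1}\in H$), the injectivity-plus-cardinality argument for $(3)\Rightarrow(5)$, and in particular the counting argument for $(4)\Rightarrow(1)$, where the equality $|U|=|G|-|N|+1$ squeezed against the union bound correctly forces both $N_G(H)=H$ and the trivial-intersection property, after which the centralizer conclusion follows as you say. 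The only caveat worth flagging is the step $(1)\Leftrightarrow(6)$ ``by definition'': with the paper's definition of a Frobenius group this requires $N$ to be nontrivial and proper, i.e.\ both $N$ and $H$ nontrivial, which is implicit in the statement (and in Isaacs' hypotheses) but is not literally guaranteed by the hypotheses as written; in the degenerate cases $(1)$--$(5)$ hold vacuously while $(6)$ fails. This is a standing convention rather than a gap in your argument.
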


Here $H$ is called the \emph{Frobenius complement}. We will need:

\begin{Thm}[Thompson, \cite{Thompson}]\label{thm:frobnil}
    Let $G$ be a Frobenius group and let $N$ be its Frobenius kernel. Then $N$ is nilpotent.
\end{Thm}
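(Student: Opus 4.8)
The plan is to reduce the statement to the theorem that a finite group admitting a fixed-point-free automorphism of prime order is nilpotent, and then to sketch the inductive proof of that fact via Thompson's normal $p$-complement theorem. First I would extract a fixed-point-free automorphism of $N$. Since $N\lneq G$ is proper we have $H\neq\{e\}$, and by Proposition \ref{prop:frobeq}(2), $C_H(n)=\{e\}$ for every $n\in N\setminus\{e\}$; equivalently, no non-identity element of $H$ fixes a non-identity element of $N$ under conjugation. As $N\normal G$, each $h\in H$ acts on $N$ by the automorphism $n\mapsto n^h$, and the previous sentence says that for $h\neq e$ this automorphism has only the trivial fixed point. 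Choosing $h\in H$ of prime order $q$ then furnishes an automorphism $\alpha$ of $N$ with $\alpha^q=\mathrm{id}$ and fixed-point set $C_N(\alpha)=\{e\}$.

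Next I would record that $q$ is automatically coprime to $|N|$. Indeed, $\langle\alpha\rangle$ acts on $N\setminus\{e\}$ and every orbit has size dividing $q$; an orbit of size $1$ would be a non-trivial fixed point, so by primality all orbits have size exactly $q$. Hence $q\mid |N|-1$ and $\gcd(q,|N|)=1$. This coprimality unlocks the standard machinery of coprime action: for every prime $p$ there is an $\alpha$-invariant Sylow $p$-subgroup, and any two such are conjugate by an element of $C_N(\alpha)=\{e\}$, so the $\alpha$-invariant Sylow $p$-subgroup $P$ is unique; moreover $\alpha$ induces a fixed-point-free automorphism on every $\alpha$-invariant subgroup and section of $N$, to which the inductive hypothesis will apply.

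The heart of the argument is then an induction on $|N|$ showing $N$ is nilpotent. Since a finite group is nilpotent exactly when it has a normal $p$-complement for every prime $p$, I would fix an odd prime $p$ and the unique $\alpha$-invariant Sylow $p$-subgroup $P$. The center $Z(P)$ and the Thompson subgroup $J(P)$ are characteristic in $P$, hence $\alpha$-invariant, so $C_N(Z(P))$ and the normalizer $N_N(J(P))$ are $\alpha$-invariant. When both are proper they are nilpotent by the inductive hypothesis and thus possess normal $p$-complements, whence Thompson's normal $p$-complement theorem yields a normal $p$-complement for $N$; the degenerate cases are absorbed into the induction, e.g. if $C_N(Z(P))=N$ then $Z(P)\le Z(N)$, and $N$ is nilpotent because $N/Z(N)$ is nilpotent by induction (a central extension of a nilpotent group being nilpotent).

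Finally I would assemble these complements. Intersecting the normal $p$-complements over all odd primes produces a normal Sylow $2$-subgroup $S$, after which passing to the nilpotent quotient $N/S$ and pulling back its (characteristic) Sylow subgroups recovers normality of the remaining Sylow subgroups of $N$. The main obstacle is precisely the even prime together with the bookkeeping of these degenerate configurations: Thompson's normal $p$-complement theorem applies only to odd $p$, so controlling the prime $2$ and verifying that the various normal complements fit together into full nilpotence is the delicate, and genuinely deep, part of the proof. This is exactly the content of Thompson's thesis, which is why we cite \cite{Thompson} rather than reproduce the argument here.
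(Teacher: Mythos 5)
The paper does not prove this statement at all --- it is imported verbatim from Thompson's paper \cite{Thompson} as a black box --- so there is no internal proof to match your attempt against. Your first two paragraphs, reducing the theorem to the statement that a finite group with a fixed-point-free automorphism of prime order is nilpotent, are correct and are the standard route: Proposition \ref{prop:frobeq}(2) does give that conjugation by any $h\in H$ of prime order $q$ is a fixed-point-free automorphism of $N$, the orbit-counting argument for $q\mid |N|-1$ is right, and the coprime-action facts you invoke (existence and uniqueness of $\alpha$-invariant Sylow subgroups, inheritance of fixed-point-freeness by invariant sections) are legitimate since the acting group $\langle\alpha\rangle$ is cyclic of prime order.

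The gap is in your final assembly paragraph, and it is a real one rather than mere bookkeeping. Knowing that $N$ has a normal $p$-complement for every odd prime $p$ gives you the normal Sylow $2$-subgroup $S$ and the nilpotence of $N/S$, but the preimage in $N$ of a Sylow $p$-subgroup of $N/S$ is $SQ$ with $Q\in\operatorname{Syl}_p(N)$, and $Q$ is merely a \emph{complement} to the normal subgroup $S$ in $SQ$; complements need not be normal, so normality of $Q$ in $N$ does not follow. Concretely, $A_4$ has a normal Sylow $2$-subgroup, a normal $p$-complement for every odd $p$, and a nilpotent quotient by its Sylow $2$-subgroup, yet is not nilpotent --- so your assembly step, as stated, proves nothing; one must re-invoke the fixed-point-free action to show $N$ is also $p$-closed (not just $p$-nilpotent) for each prime. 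Since you ultimately defer to \cite{Thompson} for exactly this part, your treatment ends up equivalent to the paper's (a citation), and as an expository gloss the outline is fine; just do not present the last paragraph as if the remaining work were routine.
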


A group $G$ is a \emph{Camina group} if every conjugacy class of $G$ outside of $G'$ is a coset of $G'$. Camina groups \cite{Camina,Lewis1} are a generalization of Frobenius groups and extra special groups. We note:

\begin{Thm}[Dark and Scoppola, \cite{Dark1}]\label{thm:camclassify} Let $G$ be a Camina group. Then one of the following is true:
        \begin{itemize}[leftmargin=*]
            \item $G$ is a Frobenius group whose Frobenius complement is cyclic.
            \item $G$ is a Frobenius group whose Frobenius complement is $Q_8$, the quaternion group of order $8$.
            \item $G$ is a $p-$group of nilpotency class $2$ or $3$ for some prime $p$.
        \end{itemize}
        In particular, Camina groups are solvable.
\end{Thm}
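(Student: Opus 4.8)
The plan is to prove the classification by first reducing, via Camina's original trichotomy, to two essentially disjoint regimes --- Frobenius groups and $p$-groups --- and then pinning down the structure inside each. First I would record the basic numerology: since every class $g^G$ with $g\notin G'$ is a full coset of $G'$, we have $|g^G|=|G'|$ and hence $|C_G(g)|=|G/G'|$ for all such $g$. This single identity is the engine behind everything that follows. I would then invoke Camina's theorem on Camina pairs \cite{Camina} applied to $(G,G')$: it guarantees that either $G'$ is a $p$-group, or $G/G'$ is a $p$-group, or $G$ is a Frobenius group with Frobenius kernel $G'$. The first task is to collapse the two non-Frobenius alternatives into the statement ``$G$ is a $p$-group,'' using the centralizer identity together with coprimality arguments to rule out genuinely mixed behavior; this is where a Camina group that is not a $p$-group is forced to be Frobenius.

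For the Frobenius branch, write $G=N\rtimes H$ with $N=G'$ the kernel and $H\cong G/N$ a complement. Using Proposition \ref{prop:frobeq} --- in particular that each nontrivial $h\in H$ is conjugate to all of $hN$, and that $C_G(h)\leq H$ --- I would compute $|g^G|=|N|\,|h^H|$ for $g$ conjugate to $h\in H$, while $|G'|=|N|\,|H'|$. Since $G/G'\cong H/H'$ is abelian, the image of $g^G$ in $G/G'$ is automatically a single point, so the coset condition $g^G=gG'$ reduces exactly to the requirement that $|h^H|=|H'|$ for every $h\in H\setminus H'$; that is, $H$ is itself a Camina group (possibly abelian). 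Because $H$ is simultaneously a Frobenius complement, its Sylow subgroups are cyclic for odd primes and cyclic or generalized quaternion at $2$. Intersecting the conditions ``Camina group'' and ``Frobenius complement'' and arguing by induction on $|G|$ should leave only the cyclic groups and $Q_8$, yielding the first two cases. Verifying that $Q_8$ genuinely occurs --- its classes outside $Q_8'=Z(Q_8)$ are the cosets $\{\pm i\},\{\pm j\},\{\pm k\}$ --- confirms this case is not vacuous.

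For the $p$-group branch, the goal is the nilpotency-class bound. Here the coset condition says that the set $\{[g,x]\colon x\in G\}$ equals $G'$ for every $g\notin G'$, a very rigid constraint. Feeding this into the lower central series, I would analyze how $G/G'$, $G'/\gamma_3(G)$, and $\gamma_3(G)$ interact and show that the commutator structure forces $\gamma_4(G)=1$, so the class is at most $3$; this is the content of MacDonald's and Dark--Scoppola's work on Camina $p$-groups \cite{MacDonald,Dark1}. Finally, solvability follows immediately from the classification: $p$-groups are nilpotent, and in the Frobenius cases the kernel is nilpotent by Thompson's theorem (Theorem \ref{thm:frobnil}) while the complement is cyclic or $Q_8$, hence solvable, so $G$ is solvable.

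The main obstacle I anticipate is the class bound in the $p$-group case: proving $\gamma_4(G)=1$ is delicate and is really the technical heart, requiring careful commutator and Lie-ring estimates rather than the soft counting used elsewhere. A secondary difficulty is the induction eliminating all Frobenius complements except cyclic groups and $Q_8$, which leans on the full structure theory of Frobenius complements.
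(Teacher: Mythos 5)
The paper offers no proof of this statement: it is imported verbatim from Dark and Scoppola \cite{Dark1} (building on Camina \cite{Camina} and MacDonald \cite{MacDonald}), so there is no internal argument to compare yours against. Judged on its own terms, your outline has concrete gaps beyond the admitted placeholder for the nilpotency-class bound. The reduction step is wrong: Camina's trichotomy for the pair $(G,G')$ says that either $G$ is Frobenius with kernel $G'$, or $G'$ is a $p$-group, or $G/G'$ is a $p$-group, and the latter two alternatives do \emph{not} collapse to ``$G$ is a $p$-group.'' The group $\Z_3^2\rtimes Q_8$, which realizes the second bullet of the theorem and appears in Theorem \ref{thm:acclassify}, is a Camina group with $G/G'\cong \Z_2\times\Z_2$ a $2$-group, yet $G$ is not a $p$-group; it is Frobenius, but with kernel $\Z_3^2$ strictly contained in $G'$ (which has order $18$).

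Relatedly, your Frobenius branch cannot produce the $Q_8$ case at all. Once you set $N=G'$, the complement $H\cong G/G'$ is abelian, your own identity $|G'|=|N|\,|H'|$ forces $H'=1$, and the structure theory of Frobenius complements then yields $H$ cyclic --- so only the first bullet can emerge from that branch. The quaternion complement arises precisely in the regime your reduction discards, namely when the Frobenius kernel is a proper subgroup of $G'$, and eliminating all non-cyclic complements other than $Q_8$ there is a substantial part of \cite{Dark1}, not a routine induction. Finally, the assertion that $\gamma_4(G)=1$ for a Camina $p$-group is the main theorem of Dark--Scoppola (improving MacDonald's earlier bound); stating that one should ``show the commutator structure forces $\gamma_4(G)=1$'' assumes the hardest ingredient of the result being proved. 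As a citation-level summary of where the difficulties lie your proposal is reasonable, but as a proof strategy it both misroutes the $Q_8$ case and leaves the two deepest steps unargued.
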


\begin{Prop}[Macdonald, Theorem 5.2(i), \cite{MacDonald}]\label{lem:pcamclass}
    Let $G$ be a Camina $p-$group. Let $\gamma_2(G)=[G,G]$ and $\gamma_3(G)=[\gamma_2(G),G]=Z(G)$ be the second and third terms in the lower central series of $G$. Then
    \[\begin{array}{cc}
        x^G=x\gamma_2(G) & \text{ if }x\in G\setminus \gamma_2(G),   \\
        x^G=x\gamma_3(G)& \text{ if }x\in \gamma_2(G)\setminus \gamma_3(G),\\
        x^G=\{x\}\ \ \ \ \ & \text{ if }x\in \gamma_3(G).          
    \end{array}\]

\end{Prop}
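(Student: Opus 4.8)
The plan is to deduce the whole conjugacy-class structure from two ingredients: the elementary commutator bookkeeping forced by the low nilpotency class, and one genuinely group-theoretic input, namely that the center of a Camina $p$-group equals $\gamma_3(G)$. I begin with the reductions. By Dark--Scoppola (Theorem \ref{thm:camclassify}) a Camina $p$-group has nilpotency class at most $3$, so $\gamma_4(G)=[\gamma_3(G),G]=e$ and hence $\gamma_3(G)\le Z(G)$. The third line is then immediate: if $x\in\gamma_3(G)$ then $x$ is central and $x^G=\{x\}$. The first line is just the definition of a Camina group: for $x\in G\setminus\gamma_2(G)$ the class $x^G$ is a coset of $G'=\gamma_2(G)$ containing $x$, so $x^G=x\gamma_2(G)$. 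For the middle line, writing $x^G=\{x[x,g]\colon g\in G\}$ and using $[x,g]\in[\gamma_2(G),G]=\gamma_3(G)$ for $x\in\gamma_2(G)$ gives the inclusion $x^G\subseteq x\gamma_3(G)$ for free; it remains to prove the reverse inclusion for $x\in\gamma_2(G)\setminus\gamma_3(G)$.

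\emph{The key reduction.} Because $\gamma_3(G)\le Z(G)$, for $x\in\gamma_2(G)$ the commutator map $\psi_x\colon G\to\gamma_3(G)$, $g\mapsto[x,g]$, is a homomorphism: from $[x,gh]=[x,h]\,[x,g]^h$ and the centrality of $[x,g]$ one gets $[x,gh]=[x,g][x,h]$. Hence $x^G=x\cdot\operatorname{Im}\psi_x$, and the middle line is equivalent to the assertion $[x,G]=\gamma_3(G)$ for every $x\in\gamma_2(G)\setminus\gamma_3(G)$. I would record the commutator data as the biadditive pairing $B\colon\gamma_2(G)/\gamma_3(G)\times G/\gamma_2(G)\to\gamma_3(G)$, $(\bar x,\bar g)\mapsto[x,g]$ (well defined since $\gamma_3(G)\le Z(G)$ and $[\gamma_2(G),\gamma_2(G)]\le\gamma_4(G)=e$), so the goal is that $B(\bar x,-)$ is surjective for all $\bar x\ne 0$. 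The Camina hypothesis enters through the companion alternating pairing $A\colon G/\gamma_2(G)\times G/\gamma_2(G)\to\gamma_2(G)/\gamma_3(G)$, $(\bar a,\bar b)\mapsto\overline{[a,b]}$: surjectivity of $\phi_y\colon g\mapsto[y,g]$ for $y\notin\gamma_2(G)$ (i.e. the first line) says precisely that $A(-,\bar y)$ is onto for every $\bar y\ne 0$, and $A$ and $B$ are tied together by the Jacobi identity $[[a,b],c]\,[[b,c],a]\,[[c,a],b]=e$, valid because the class is at most $3$.

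\emph{The crux: $Z(G)=\gamma_3(G)$.} The one nonformal step is to show that the center of a Camina $p$-group is exactly $\gamma_3(G)$, equivalently that $B$ is nondegenerate in its first variable. One inclusion is above. For $Z(G)\le\gamma_2(G)$: a central $z\notin\gamma_2(G)$ would have $z^G=\{z\}$ and, by the first line, $z^G=z\gamma_2(G)$, forcing $\gamma_2(G)=e$. The remaining point, that no central element lies in $\gamma_2(G)\setminus\gamma_3(G)$, is the heart of the theorem. I would prove it with the character-theoretic form of the Camina condition---every nonlinear $\chi\in\operatorname{Irr}(G)$ vanishes on $G\setminus\gamma_2(G)$---combined with a column-orthogonality and class-counting argument on the classes contained in $\gamma_2(G)$; alternatively one argues directly with the pairings, using that $A(-,\bar y)$ is onto in every slot to write a putatively degenerate $\bar x$ as $A(\bar a,\bar y)$ and feeding this into the Jacobi relation to reach a contradiction, which is essentially MacDonald's explicit commutator computation.

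\emph{Conclusion and the main obstacle.} Granting $Z(G)=\gamma_3(G)$ for all Camina $p$-groups, the middle line follows with no further induction. Fix $x\in\gamma_2(G)\setminus\gamma_3(G)$ and put $K=[x,G]=\operatorname{Im}\psi_x$, a central, hence normal, subgroup of $\gamma_3(G)$. If $K\lneq\gamma_3(G)$, then $G/K$ is again a Camina $p$-group with $\gamma_3(G/K)=\gamma_3(G)/K$; moreover $\bar x\ne\bar e$ (as $x\notin\gamma_3(G)\supseteq K$) while $[\bar x,G/K]=\overline{K}=\bar e$, so $\bar x\in Z(G/K)=\gamma_3(G/K)=\gamma_3(G)/K$, whence $x\in\gamma_3(G)$, a contradiction. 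Therefore $K=\gamma_3(G)$ and $x^G=x\gamma_3(G)$. The main obstacle is clearly the step $Z(G)=\gamma_3(G)$: the reductions and the final deduction are purely formal, but this nondegeneracy genuinely requires the Camina hypothesis (it fails for general class-$3$ $p$-groups) and does not follow from commutator calculus alone---it is exactly where the vanishing of nonlinear characters off $\gamma_2(G)$, or MacDonald's detailed computation, must be invoked.
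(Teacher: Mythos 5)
First, be aware that the paper contains no proof of this proposition: it is imported verbatim from MacDonald \cite{MacDonald} (Theorem 5.2(i)), so there is no internal argument to compare yours against. Judged on its own terms, your write-up is a correct and well-organized \emph{reduction}, but not a proof. The first and third lines, the inclusion $x^G\subseteq x\gamma_3(G)$ for $x\in\gamma_2(G)$, the observation that $g\mapsto[x,g]$ is a homomorphism onto $[x,G]\leq\gamma_3(G)\leq Z(G)$, and the closing quotient argument (pass to $G/K$ with $K=[x,G]$, invoke Proposition \ref{prop:cammod} to see $G/K$ is again Camina, and apply the crux there) are all sound. But the entire weight of the middle line rests on the crux $Z(G)=\gamma_3(G)$, and at that point you only announce two candidate strategies --- ``column-orthogonality and class-counting'' or ``feeding a degenerate $\bar x$ into the Jacobi relation'' --- without executing either. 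That equality is precisely the nontrivial content of MacDonald's theorem; it is the one place the Camina hypothesis does real work (as you yourself say, it fails for general class-$3$ $p$-groups), so leaving it as a plan leaves the proposition unproved.

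There is also a smaller but genuine error: you assert the crux for \emph{all} Camina $p$-groups, and in that generality it is false. A Camina $p$-group of class $2$ (e.g.\ an extraspecial group) has $Z(G)=\gamma_2(G)\neq e=\gamma_3(G)$, so nontrivial central elements do lie in $\gamma_2(G)\setminus\gamma_3(G)$; the equality $Z(G)=\gamma_3(G)$ holds only in class $3$, which is also the only setting in which the paper ever uses the middle line in the form $h^G=hZ(G)$ (see the proof of Proposition \ref{prop:frobca}). Your final deduction happens to survive this correction, since the quotient $G/K$ you form still has class exactly $3$ and the needed fact would apply to it, but you must restrict the crux to class $3$ and handle class $2$ separately (there $\gamma_2(G)\leq Z(G)$ makes the middle line trivial, since $\gamma_3(G)=e$). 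In summary: restrict and then actually prove $Z(G)=\gamma_3(G)$ for class-$3$ Camina $p$-groups, or cite it, exactly as the paper does for the whole proposition.
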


 \begin{Prop}[Lemma 2.3, \cite{Lewis1}]\label{prop:cammod}
    Let $G$ be a Camina group.
    \begin{enumerate}[leftmargin=*]
        \item If $N$ is a normal subgroup of $G$, then either $N\leq G'$ or $G'\leq N$.
        \item If $N<G'$ is a normal subgroup of $G$, then $G/N$ is also a Camina group.
    \end{enumerate}
\end{Prop}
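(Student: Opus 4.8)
The plan is to derive both parts directly from the defining property of a Camina group --- that for every $x\in G\setminus G'$ the conjugacy class $x^G$ coincides with the coset $xG'$ --- together with elementary facts about the canonical projection $\pi\colon G\to G/N$. Neither part requires any of the structural classification theorems for Camina groups; they are immediate consequences of the definition.

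For part (1) I would argue by the dichotomy ``either $N\leq G'$ or not.'' Suppose $N\not\leq G'$ and fix an element $x\in N\setminus G'$. Since $G$ is Camina and $x\notin G'$, the Camina property gives $x^G=xG'$. On the other hand, $N\normal G$ forces the entire conjugacy class $x^G$ to lie inside $N$. Combining these, $xG'\subseteq N$, and multiplying on the left by $x^{-1}\in N$ yields $G'\subseteq N$. Hence whenever $N$ fails to be contained in $G'$ we must have $G'\leq N$, which is exactly the stated alternative.

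For part (2) assume $N<G'$ with $N\normal G$, and write $\pi$ for the quotient map. I would first record that $(G/N)'=\pi(G')=G'/N$, so that a coset $\ox=xN$ lies outside $(G/N)'$ precisely when $x\notin G'$ (this is where $N\leq G'$ is used). Given such an $\ox$, the conjugacy class of $\ox$ in $G/N$ is the image $\pi(x^G)$ of the conjugacy class of $x$, since $\pi$ is a surjective homomorphism. Applying the Camina property to $x$ gives $x^G=xG'$, and pushing through $\pi$ gives $\pi(x^G)=\pi(xG')=\pi(x)\,\pi(G')=\ox\,(G/N)'$. Thus every conjugacy class of $G/N$ outside $(G/N)'$ is a coset of $(G/N)'$, so $G/N$ is Camina.

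The argument is short, and there is no genuine obstacle; the only points requiring care are the structural checks on the quotient. One must confirm that $\pi$ carries the class $x^G$ exactly onto the class of $\ox$ (immediate from surjectivity, as $\pi(x^g)=\pi(x)^{\pi(g)}$) and that the image $\pi(xG')$ does not collapse to a proper subset of $\ox\,(G/N)'$. The latter holds because the image of a coset of $G'$ under the homomorphism $\pi$ is the coset $\pi(x)\pi(G')$, and the hypothesis $N\leq G'$ ensures $\pi(G')=G'/N=(G/N)'$, so no identification beyond $N$ occurs. Verifying these two compatibilities is the whole content of the step, and I expect this to be the most delicate (though still routine) part of the write-up.
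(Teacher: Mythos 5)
Your proof is correct. Note that the paper itself gives no proof of this proposition --- it is quoted verbatim from Lewis's survey (Lemma 2.3 of \cite{Lewis1}) --- so there is nothing internal to compare against; your direct argument from the definition (for (1), that $x\in N\setminus G'$ forces $xG'=x^G\subseteq N$ and hence $G'\leq N$; for (2), that $\pi$ carries classes to classes and cosets of $G'$ to cosets of $(G/N)'=G'/N$ when $N\leq G'$) is exactly the standard one and is complete. The only point worth a passing remark is that the strict inequality $N<G'$ guarantees $(G/N)'\neq 1$, so the quotient is genuinely nonabelian, which matters under conventions that exclude abelian groups from being Camina.
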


 \begin{Lem}[Lemma 4.3, \cite{Lewis1}]\label{lem:dercam}
            Let $G$ be a Camina group. Then $Z(G)\leq G'$.
        \end{Lem}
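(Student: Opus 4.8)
The plan is to argue by contradiction, playing off the tension between an element being central---so that its conjugacy class is a singleton---and the Camina defining property, which forces every conjugacy class lying outside $G'$ to be a full coset of $G'$.

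First I would suppose, for contradiction, that $Z(G)\not\leq G'$, and fix an element $z\in Z(G)\setminus G'$. Because $z$ lies outside $G'$, the Camina hypothesis applies to it directly: the conjugacy class $z^G$ equals the coset $zG'$. On the other hand, since $z$ is central we have $g^{-1}zg=z$ for every $g\in G$, so $z^G=\{z\}$ is a singleton. Comparing these two descriptions of $z^G$ gives $zG'=\{z\}$, which forces $G'=\{e\}$, i.e.\ $G$ is abelian.

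It then remains to rule this out. Since a Camina group is by convention nonabelian---equivalently $G'\neq\{e\}$, which is implicit in the fact that every case of the classification in Theorem \ref{thm:camclassify} is nonabelian---the conclusion $G'=\{e\}$ is a contradiction. Hence no such central element $z$ outside $G'$ exists, and therefore $Z(G)\leq G'$.

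I expect the only point requiring genuine care to be the explicit use of the standing convention that Camina groups satisfy $G'\neq\{e\}$, since this is exactly what converts the equality $zG'=\{z\}$ into a contradiction; once the two defining properties of $z$ (centrality and lying outside $G'$) are unwound and the resulting descriptions of $z^G$ are set equal, the argument is immediate and no computation is needed.
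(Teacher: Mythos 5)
Your argument is correct: it is the standard proof of this fact, and the paper itself gives no proof, simply citing Lewis's survey, where essentially the same size-count (a central element outside $G'$ would have a singleton class equal to a full coset of $G'$, forcing $G'=\{e\}$) is used. You are also right that the nonabelian convention for Camina groups is genuinely needed here --- the paper's Proposition \ref{prop:concos}, which contrasts ``a Camina group'' with ``an abelian group,'' confirms that this is the convention in force --- so your explicit appeal to it closes the only possible gap.
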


If $H\normal G$, then $(G,H)$ is a \emph{Camina pair} if for $g\in G\setminus H$, $g$ is conjugate to every element of $gH$. We have:

\begin{Lem}[Lemma 4.1, \cite{Lewis1}]\label{lem:pair}
    Let $1<K\triangleleft G$. Then the following are equivalent:
    \begin{enumerate}
        \item $(G,K)$ is a Camina pair.
        \item If $x\in G\setminus K$, then $|C_G(x)|=|C_{G/K}(xK)|$.
        \item If $xK$ and $yK$ are conjugate and nontrivial in $G/K$, then $x$ is conjugate to $y$ in $G$.
        % \item For all $x\in G\setminus K$ and $z\in K$, there exists an element $y\in G$ so that $[x,y]=z$.
        \item If $C_1=\{1\},C_2,\cdots , C_m$ are the conjugacy classes of $G$ contained in $K$ and $C_{m+1},\cdots, C_n$ are the conjugacy classes of $G$ outside $K$, then $C_iC_j=C_j$ for $1\leq i\leq m$ and $m+1\leq j\leq n$.
        % \item For all $\chi\in Irr(G|K)$, $\chi$ vanishes on $G\setminus K$.
        % \item Every nonprincipal character in $Irr(K)$ induces homogeneously to $G$.
    \end{enumerate}
\end{Lem}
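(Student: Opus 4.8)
The plan is to treat the defining condition (1), that $(G,K)$ is a Camina pair, as a hub and to prove $1\iff 2$, $1\iff 3$, and $1\iff 4$ separately; since every other condition is then shown equivalent to (1), this yields the full equivalence. The single observation driving everything is that, because $K\normal G$, the quotient map $G\to G/K$ carries conjugation in $G$ to conjugation in $G/K$. Hence for $x\in G\setminus K$ the class $x^G$ maps onto the class $(xK)^{G/K}$ under $y\mapsto yK$, with each nonempty fiber contained in a single coset of $K$ and so of size at most $|K|$. Consequently one always has $|x^G|\le |K|\cdot|(xK)^{G/K}|$, with equality exactly when $x^G$ is the full preimage of $(xK)^{G/K}$. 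I would first record that the Camina condition for $x$ — that $x$ is conjugate to every element of $xK$ — is precisely the statement that $x^G$ is this full preimage: if $xK\subseteq x^G$ then for any $y\in x^G$ the coset $yK=(xK)^{gK}$ is conjugate to $xK$, and applying the Camina condition at $y$ (note $y\notin K$ since $x^G\cap K=\emptyset$) gives $yK\subseteq y^G=x^G$; conversely, if $x^G$ is the full preimage it contains the fiber $xK$.

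For $1\iff 2$ I would convert the equality $|x^G|=|K|\cdot|(xK)^{G/K}|$ into the centralizer statement by orbit--stabilizer. Writing $|x^G|=|G|/|C_G(x)|$ and $|(xK)^{G/K}|=(|G|/|K|)/|C_{G/K}(xK)|$, the equality becomes $|C_G(x)|=|C_{G/K}(xK)|$. Thus (2) says exactly that equality holds in the counting bound for every $x\notin K$, which by the previous paragraph is equivalent to the Camina condition holding for every $x\notin K$, i.e.\ to (1).

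For $1\Rightarrow 3$, suppose $xK$ and $yK$ are conjugate and nontrivial; choosing $g$ with $g^{-1}xg\in yK$ and noting $g^{-1}xg\notin K$, the Camina condition inside the coset $yK$ gives $g^{-1}xg\sim y$, so $x\sim y$. For $3\Rightarrow 1$, given $x\notin K$ and $k\in K$, the cosets $(xk)K$ and $xK$ coincide, hence are (trivially) conjugate and nontrivial, so (3) forces $xk\sim x$; as $k$ ranges over $K$ this says $x$ is conjugate to every element of $xK$. For $1\iff 4$ I would argue with set products, using $K\normal G$ to track cosets. Assuming (1), for a $G$-class $C_i\subseteq K$, a class $C_j\subseteq G\setminus K$, and any $a\in C_i$, $b\in C_j$, normality gives $ab\in bK$ with $b\notin K$, so the Camina condition yields $ab\in b^G=C_j$; thus $C_iC_j\subseteq C_j$, and the reverse inclusion follows by writing $b=a\,(a^{-1}b)$ for a fixed $a\in C_i$, where $a^{-1}b\in C_j$ by the same argument, giving $C_iC_j=C_j$. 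Conversely, assuming (4), for $x\in G\setminus K$ (say $x\in C_j$) and $k\in K$ (say $k\in C_i$) we get $kx\in C_iC_j=C_j=x^G$; since $K\normal G$ gives $\{kx:k\in K\}=Kx=xK$, this is exactly the Camina condition for $x$.

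The only step requiring genuine care is the equality case of the counting bound in $1\iff 2$, namely recognizing that under the Camina hypothesis $x^G$ is not merely contained in, but equal to, the full preimage of its image in $G/K$; once this is isolated, the remaining implications are routine coset bookkeeping resting only on the normality of $K$ and orbit--stabilizer.
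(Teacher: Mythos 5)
The paper does not prove this lemma; it is quoted verbatim from Lewis's survey (Lemma 4.1 of \cite{Lewis1}), so there is no internal proof to compare against. Your argument is a correct, self-contained proof of all the equivalences. The hub-and-spoke structure around condition (1) is sound, and each spoke checks out: the counting argument for $1\Leftrightarrow 2$ correctly identifies that $|C_G(x)|=|C_{G/K}(xK)|$ is the equality case of $|x^G|\leq |K|\cdot|(xK)^{G/K}|$, and you correctly handle the one genuinely delicate point, namely that the Camina condition at the single element $x$ only guarantees the fiber over $xK$ is full, so that showing $x^G$ equals the \emph{entire} preimage of $(xK)^{G/K}$ requires invoking the hypothesis at every $y\in x^G$ --- which is legitimate since (1) is a global hypothesis. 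The implications $1\Leftrightarrow 3$ and $1\Leftrightarrow 4$ are routine coset bookkeeping as you say, and the small steps that could trip one up (that $a^{-1}\in K$ even though $a^{-1}$ need not lie in $C_i$, that $Kx=xK$ by normality, that conjugates of $x\notin K$ stay outside $K$) are all used correctly. Nothing to fix.
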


 \begin{Lem}[\cite{Lewis1}]\label{lem:camsub}
        Let $(G,K)$ be a Camina pair. Then:
        \begin{enumerate}
            \item If $N<K$ is normal in $G$, then $(G/N,K/N)$ is a Camina pair.
            \item $Z(G)\leq K\leq G'$.
        \end{enumerate}
    \end{Lem}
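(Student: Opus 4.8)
The plan is to derive all three assertions directly from the definition of a Camina pair, supplemented where convenient by the equivalent formulations in Lemma~\ref{lem:pair}; each reduces to a one-line conjugacy argument. Throughout I use that $K$ is a proper nontrivial normal subgroup, which is part of the standing hypothesis that $(G,K)$ is a Camina pair.

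For part~(1), I would first check the two structural conditions: $K/N$ is normal in $G/N$ because $N,K\normal G$ with $N\le K$, and $K/N$ is nontrivial because the inclusion $N<K$ is proper. To verify the defining conjugacy condition, take any coset $gN\in(G/N)\setminus(K/N)$; then $g\notin K$, for otherwise $gN\in K/N$. A typical element of $(gN)(K/N)$ is $gkN$ with $k\in K$, and since $(G,K)$ is a Camina pair there is $h\in G$ with $h^{-1}gh=gk$. Projecting to $G/N$ gives $(hN)^{-1}(gN)(hN)=gkN$, so $gN$ is conjugate to every element of its coset of $K/N$, as required.

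For part~(2), the inclusion $Z(G)\le K$ follows by contradiction: if some $z\in Z(G)$ lay outside $K$, then $z\in G\setminus K$, and the Camina condition would force every element of $zK$ to be conjugate to $z$; but the conjugacy class of the central element $z$ is $\{z\}$, so $zK=\{z\}$ and hence $K=\{e\}$, contradicting $K>1$. For the inclusion $K\le G'$, the key move is to pass to the abelianization $G/G'$. Fix any $g\in G\setminus K$, which exists because $K$ is proper, and any $k\in K$; the Camina condition makes $g$ and $gk$ conjugate, hence equal in $G/G'$, and cancelling the image of $g$ gives $kG'=G'$, i.e.\ $k\in G'$. Since $k$ was arbitrary, $K\le G'$.

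I do not anticipate a serious obstacle: the content is entirely in short conjugacy computations. The only points needing care are ensuring the conjugacy conditions are not vacuous — using properness of $N<K$ to see $K/N\neq 1$ in part~(1), and properness of $K<G$ to guarantee an element $g\in G\setminus K$ in part~(2) — and the abelianization trick for $K\le G'$, which is the single genuinely slick step but is very short.
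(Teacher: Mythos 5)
Your proof is correct. The paper does not prove this lemma at all --- it is imported verbatim from Lewis's survey \cite{Lewis1} --- so there is no in-paper argument to compare against; your three steps (lifting the conjugacy condition modulo $N$, noting that a central element outside $K$ would force $K=\{e\}$, and passing to the abelianization to get $K\leq G'$) are exactly the standard short arguments, and the only hypotheses you rely on, namely $1<K<G$, are indeed part of the definition of a Camina pair as used here (cf.\ Lemma \ref{lem:pair}).
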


\begin{Prop}[\cite{Lewis1}]\label{prop:campairgroup}
    A group $G$ is a Camina group if and only if $(G,G')$ is a Camina pair.
\end{Prop}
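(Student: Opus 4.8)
The plan is to reduce both directions to a single universal observation: for any group $G$ and any $g\in G$, the conjugacy class $g^G$ is always contained in the coset $gG'$. To see this, note that for each $h\in G$ we have $g^h=h^{-1}gh=g(g^{-1}h^{-1}gh)=g[g,h]$, and $[g,h]\in G'$ by definition, so $g^h\in gG'$. Hence $g^G\subseteq gG'$ with no hypotheses whatsoever on $G$. The entire proposition then turns on whether this containment can be upgraded to an equality $g^G=gG'$ for elements $g\notin G'$.

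For the forward direction, I would suppose $G$ is a Camina group and fix $g\in G\setminus G'$. By definition the class $g^G$ is a coset of $G'$; since $g\in g^G$ and $g^G\subseteq gG'$, the only coset of $G'$ that can equal $g^G$ is $gG'$ itself, so $g^G=gG'$. This says precisely that $g$ is conjugate to every element of $gG'$, which (recalling $G'\normal G$, so the pair is well-defined) is exactly the defining condition for $(G,G')$ to be a Camina pair.

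For the reverse direction, I would suppose $(G,G')$ is a Camina pair and fix $g\in G\setminus G'$. The Camina-pair hypothesis gives $gG'\subseteq g^G$, while the universal containment gives $g^G\subseteq gG'$, so once more $g^G=gG'$. Thus every conjugacy class meeting $G\setminus G'$ is exactly a coset of $G'$ (note that, since $G'$ is normal, every conjugacy class is either contained in $G'$ or disjoint from it, so ``outside $G'$'' is unambiguous), which is the definition of a Camina group.

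I expect essentially no serious obstacle here: once the containment $g^G\subseteq gG'$ is established, the argument is purely definitional and symmetric between the two implications. The only point deserving a moment's care is the matching of definitions in the forward direction—the Camina hypothesis merely asserts that $g^G$ is \emph{some} coset of $G'$, and one must observe that this coset is forced to be $gG'$ precisely because a conjugacy class always lies inside a single $G'$-coset.
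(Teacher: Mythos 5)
Your argument is correct. The paper imports this proposition from Lewis without proof, so there is nothing to compare against; your proof is the standard definitional one, and the two points you flag — the universal containment $g^G\subseteq gG'$ via $g^h=g[g,h]$, and the observation that a conjugacy class meeting $G\setminus G'$ is automatically disjoint from $G'$ by normality — are exactly the details that make both implications immediate.
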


A group $G$ is called a \emph{Con-cos} group \cite{Con-cos1,Con-cos2} if there is $K\normal G$ with $xK=x^G$ for $x\in G\setminus K$.

\begin{Prop}[Proposition 2.8, \cite{Few_Conj}]\label{prop:concos}
    Let $G$ be a group. The following conditions are equivalent:
    \begin{enumerate}
        \item $G$ is Con-cos;
        \item $G$ is either a Camina group, or an abelian group.
        \item $xG'=x^G$ for all $x\in G\setminus G'$;
        \item $G'=\{[x,y]\colon y\in G\}$ for any $x\in G\setminus G'$.
    \end{enumerate}
\end{Prop}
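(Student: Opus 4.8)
The plan is to use condition (3) as a hub and prove that each of (1), (2), and (4) is equivalent to it; since (3) is the literal assertion that the conjugacy classes outside $G'$ coincide with the cosets $xG'$, routing everything through (3) keeps the bookkeeping minimal.

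First I would record the elementary commutator identity that drives the whole argument. For $x,y\in G$ we have $x^y=y^{-1}xy=x(x^{-1}y^{-1}xy)=x[x,y]$, so for a fixed $x$,
\[x^G=\{x^y\colon y\in G\}=x\{[x,y]\colon y\in G\}.\]
Since $\{[x,y]\colon y\in G\}\subseteq G'$ always holds, left-multiplication by $x^{-1}$ shows that for each individual $x\in G\setminus G'$ the equality $x^G=xG'$ holds if and only if $\{[x,y]\colon y\in G\}=G'$. Quantifying over all such $x$ yields (3) $\Leftrightarrow$ (4) immediately.

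Next I would dispose of (2) $\Leftrightarrow$ (3). If $G$ is abelian then $G'=\{e\}$ and for $x\neq e$ both $x^G$ and $xG'$ equal $\{x\}$, so (3) holds; if $G$ is a non-abelian Camina group then by definition each class $x^G$ with $x\notin G'$ is a coset of $G'$, and since $x^G\subseteq xG'$ always (as $G'\normal G$ forces $[x,y]\in G'$), this coset must be $xG'$, again giving (3). Conversely, assuming (3), either $G$ is abelian, or $G$ is non-abelian and (3) is verbatim the Camina condition. This is the point at which I would be careful to invoke the convention that Camina groups are non-abelian, so that the two alternatives in (2) are genuinely disjoint rather than overlapping.

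The substantive step is (1) $\Leftrightarrow$ (3). The direction (3) $\Rightarrow$ (1) is immediate upon taking $K=G'$. For (1) $\Rightarrow$ (3) I would show that any proper normal subgroup $K\lneq G$ witnessing the Con-cos property must in fact equal $G'$. On one hand, for any $x\notin K$ and any $k\in K$ we have $xk\in xK=x^G$, whence $xk=x^y$ for some $y$ and thus $k=[x,y]\in G'$, giving $K\subseteq G'$. On the other hand, passing to $G/K$, the image of $x^G=xK$ is the single element $\overline{x}$, so every $\overline{x}\neq\overline{e}$ lies in $Z(G/K)$; hence $G/K$ is abelian and $G'\subseteq K$. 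Combining the two inclusions gives $K=G'$, and the Con-cos relation $x^G=xK$ then reads exactly as (3). I expect this identification of $K$ to be the main obstacle: each inclusion is short, but one must remember that existence of an element $x\notin K$ is what powers both halves, so $K$ must be proper, and one should flag the degenerate perfect case $G'=G$, in which no proper $K$ exists while the quantifier in (3) is vacuous. I would handle this by adopting the standing hypothesis (as in \cite{Few_Conj}) that the groups under consideration satisfy $G'\lneq G$, so that this pathology does not arise.
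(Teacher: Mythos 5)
The paper does not prove this proposition; it is quoted verbatim from \cite{Few_Conj} (their Proposition 2.8), so there is no in-paper argument to compare against. Your proof is correct and self-contained: the identity $x^y=x[x,y]$ gives (3) $\Leftrightarrow$ (4) at once; the definitional unwinding of Camina/abelian gives (2) $\Leftrightarrow$ (3); and for (1) $\Rightarrow$ (3) your identification of any Con-cos witness $K$ with $G'$ (the inclusion $K\subseteq G'$ from $xk=x^y$ with $x\notin K$, and $G'\subseteq K$ from every nontrivial class of $G/K$ being a singleton, so $G/K$ is abelian) is exactly what is needed. Your two caveats --- that Camina groups are taken to be non-abelian so the alternatives in (2) are disjoint, and that the perfect case $G'=G$ must be excluded for (1) $\Leftrightarrow$ (3) to hold (since (3) is then vacuous while no proper witness $K$ need exist) --- are the right ones and match the conventions of the cited source.
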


\begin{Cor}\label{cor:consolv}
    If $G$ is a Con-cos group, then $G$ is solvable.
\end{Cor}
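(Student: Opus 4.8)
The plan is to deduce this immediately from the structural dichotomy for Con-cos groups together with the known solvability of Camina groups. By Proposition \ref{prop:concos}, any Con-cos group $G$ falls into exactly one of two mutually exhaustive cases: either $G$ is abelian, or $G$ is a Camina group. So the strategy is simply to verify solvability in each of these two cases separately and conclude.

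First I would dispose of the abelian case: if $G$ is abelian, then $G$ is trivially solvable, since its derived series terminates after a single step (indeed $G'=\{e\}$). This case requires no further argument.

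Next I would handle the Camina case by invoking Theorem \ref{thm:camclassify} of Dark and Scoppola, whose final assertion is precisely that every Camina group is solvable. (If one prefers not to quote that clause directly, solvability also follows by inspecting the three possibilities in the classification: a Frobenius group with cyclic or $Q_8$ complement is solvable because both its kernel—nilpotent by Theorem \ref{thm:frobnil}—and its complement are solvable, and a $p$-group of nilpotency class $2$ or $3$ is nilpotent, hence solvable.) Combining the two cases yields that every Con-cos group is solvable.

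There is no genuine obstacle here: the corollary is an immediate consequence of the two cited results, and the only thing to check is that the case split in Proposition \ref{prop:concos} is exhaustive, which it is by construction. The content of the statement lies entirely in the deep theorem of Dark and Scoppola that Camina groups are solvable; the corollary merely repackages that fact in the language of Con-cos groups.
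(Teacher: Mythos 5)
Your argument is correct and is exactly what the paper intends: the corollary is stated without a written proof precisely because it follows immediately from Proposition \ref{prop:concos} (Con-cos means abelian or Camina) together with the final clause of Theorem \ref{thm:camclassify} (Camina groups are solvable). The only cosmetic point is that the two cases need only be exhaustive, not mutually exclusive, but that changes nothing.
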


We now mention the classification from \cite{Bastian2}.

\begin{Thm}\label{thm:acclassify}[Theorem 4.1, \cite{Bastian2}]
        $T(G)$ is AC if and only if $G$ is isomorphic to one of:
        \begin{enumerate}
        \item A finite abelian group.
        \item A Frobenius group $(\Z_p)^r\rtimes \Z_{p^{r}-1}$, for some prime $p$ and $r>0$.
        \item A non-abelian Camina $p-$group, for some prime $p$.
        \item The Frobenius group $\Z_3^2\rtimes Q_8$.
    \end{enumerate}
    \end{Thm}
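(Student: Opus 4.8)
\emph{Reformulation.} Since $\mathcal G(G)=\C[G]^G$, the principal sets of this Schur ring are exactly the conjugacy classes of $G$, and $T(G)$ is the $H=G$ instance of the strong Gelfand pair setup (the pair $(G,G)$ is always a strong Gelfand pair). The plan is to run everything through Theorem~\ref{thm:sequiv}. For two classes $P_i,P_j$ one has $P_i=P_j^{*}$ precisely when $e\in P_iP_j$, so Theorem~\ref{thm:sequiv} specializes to the statement that $T(G)$ is AC if and only if, for all conjugacy classes with $e\notin P_iP_j$, the product set $P_iP_j$ is a single conjugacy class. I will call this the \emph{class-product condition} and prove the theorem by verifying it for each of the four families (sufficiency) and showing nothing else can satisfy it (necessity). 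I will repeatedly use that a product of classes is a union of classes and that $x^G\subseteq xG'$ for all $x$.

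\emph{Sufficiency.} For $G$ abelian every class is a singleton, so the condition is vacuous. For each Camina family I would organize the check by the position of the two factors relative to $G'$. If $x,y\notin G'$, then $x^G=xG'$ and $y^G=yG'$ by the Con-cos property (Proposition~\ref{prop:concos}), whence $x^Gy^G=xyG'$ is the single class $(xy)^G$ unless $xy\in G'$; but $xy\in G'$ means $y^G=(x^{-1})^G$, i.e.\ $e\in x^Gy^G$, the excluded case. If exactly one factor lies in $G'$, then $(G,G')$ is a Camina pair (Proposition~\ref{prop:campairgroup}) and Lemma~\ref{lem:pair}(4) gives $x^Gy^G=y^G$, a single class. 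For both factors inside $G'$ I would use the explicit class description: for a Camina $p$-group this is Macdonald's Proposition~\ref{lem:pcamclass}, and a short case split on whether the factors lie in $\gamma_2\setminus\gamma_3$ or in $\gamma_3=Z(G)$ shows each product is a single class unless it contains $e$; for the Frobenius groups the complement acts regularly on $N\setminus\{e\}$ (where $N$ is the kernel), so $N\setminus\{e\}$ is a single, self-paired class and all of its self-products contain $e$ and are excluded, while in the $Q_8$ case the one additional class of $G'$ coming from $Z(Q_8)\le G'$ is checked directly. Assembling the cases gives the class-product condition for all four families.

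\emph{Necessity.} Suppose $T(G)$ is AC with $G$ nonabelian. The key first step is to prove $G$ is Camina, which (via Proposition~\ref{prop:concos}, Proposition~\ref{prop:campairgroup} and Lemma~\ref{lem:pair}(3) with $K=G'$) is equivalent to showing every nontrivial coset $xG'$ is a single class. Granting this, $G$ is nonabelian Con-cos, hence Camina, and Dark--Scoppola (Theorem~\ref{thm:camclassify}) leaves three cases. Camina $p$-groups of class $2$ or $3$ are precisely the non-abelian Camina $p$-groups, all AC by the sufficiency step, so all occur. For a Frobenius group with abelian complement, $G'$ equals the kernel $N$ (as $G/N$ is abelian and the action is fixed-point-free). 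If $N\setminus\{e\}$ were a union of $\geq 2$ classes, one finds two non-inverse-paired classes whose product avoids $e$ yet spans several classes (exactly as in the computation for $\Z_5\rtimes\Z_2$), contradicting the class-product condition; hence $N\setminus\{e\}$ is a single class. A fixed-point-free complement for which $N\setminus\{e\}$ is one class forces $N$ elementary abelian of order $p^r$ and the complement to be the Singer cycle $\Z_{p^r-1}$, giving $G\cong(\Z_p)^r\rtimes\Z_{p^r-1}$. The $Q_8$ case runs in parallel: regularity forces $|N|=9$ and $N=\Z_3^2$, after which checking the extra class $Nz$ ($z$ generating $Z(Q_8)\le G'$) isolates $\Z_3^2\rtimes Q_8$.

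\emph{Main obstacle.} The crux is the first step of necessity: deducing from the class-product condition that every nontrivial $G'$-coset is a single class. This is not forced by any single class-sum identity; for example, writing $D=xG'$, the relation $\widehat{D^{-1}}\,\widehat{D}=|G'|\,\widehat{G'}$ is consistent with $D$ containing several classes. I would attack it either (i) by combining the class-product condition across all cosets with Tanaka's wreath characterization (Theorem~\ref{thm:tanaka}(5)) to extract a normal subgroup that must equal $G'$ and has homogeneous nontrivial cosets, or (ii) inductively, by first checking that the class-product condition descends to quotients $G/N$ and then bounding the minimal nonabelian AC quotients. The second genuine difficulty is the complete elimination in the $Q_8$ case, where $G'$ strictly contains the kernel and the class structure of $G'$ is richer; I expect this step to rely on the explicit (Magma-assisted) verification the paper already mentions.
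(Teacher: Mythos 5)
First, a point of comparison: this paper does not actually prove Theorem \ref{thm:acclassify} --- it is imported as Theorem 4.1 of the companion paper \cite{Bastian2} --- so there is no in-paper proof to measure you against. Judged on its own terms, your proposal is roughly half a proof. The reformulation via Theorem \ref{thm:sequiv} is correct ($P_i=P_j^*$ iff $e\in P_iP_j$ for conjugacy classes, so AC is equivalent to your class-product condition), and the sufficiency direction is sound in outline: the coset description of classes outside $G'$ for Con-cos groups, Lemma \ref{lem:pair}(4) for mixed products, and Proposition \ref{lem:pcamclass} for products inside $G'$ do carry the four families through, with $\Z_3^2\rtimes Q_8$ checked directly.

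The genuine gap is the one you concede yourself: the entire necessity direction rests on the claim that the class-product condition forces $G$ to be Con-cos (equivalently $x^G=xG'$ for all $x\notin G'$), and you do not prove this --- you only sketch two possible attacks, neither carried out. Everything downstream (Dark--Scoppola, the regularity argument pinning down $(\Z_p)^r\rtimes\Z_{p^r-1}$, the $Q_8$ elimination) is conditional on that step, so the ``only if'' direction is not established as written. The present paper's own machinery indicates how this is actually done: the general-$H$ arguments (Theorem \ref{thm:sgpequiv1}, Theorem \ref{thm:sgpequiv}, Proposition \ref{prop:frobca}) manufacture the coset identity by repeatedly applying the product condition to carefully chosen pairs --- e.g.\ expanding $(ghg^{-1})^H$ two ways to force $g^H\cdot(g^{-1})^H$ into a small normal subgroup --- and then invoke Lemma \ref{lem:pair} to obtain a Camina pair; this is a direct element-pushing argument, not a detour through Tanaka's wreath characterization, and it is the piece you would need to reconstruct. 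Two smaller issues: your claim that a kernel with $N\setminus\{e\}$ a single class must be abelian needs the observation that $Z(N)$ is a proper characteristic subgroup of the nilpotent kernel (Theorem \ref{thm:frobnil}) when $N$ is nonabelian; and the assertion that a splitting of $N\setminus\{e\}$ into several classes violates the class-product condition should be proved in general rather than by analogy with $\Z_5\rtimes\Z_2$ (a class there is self-paired, so the relevant product is actually a cross-product of two distinct classes).
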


\section{AC Terwilliger algebras for SGPs}

\begin{Lem}\label{lem:centralizerh}
    Let $(G,H)$ be a SGP. Suppose for all $x,y\in G$ if $x^H\neq (y^{-1})^H$ then $x^H\cdot y^H=(xy)^H$. Then $x^HC_G(H)=x^H$ for all $x\in G\setminus C_G(H)$. Also, $C_G(H)\leq [K,H]$ whenever $K\subseteq G$ is not a subset of $C_G(H)$.
\end{Lem}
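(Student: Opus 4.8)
The plan is to notice first that the stated hypothesis is exactly the AC condition of Theorem \ref{thm:sequiv} for the $H$-class Schur ring, and that the second assertion is a short consequence of the first. Writing $C=C_G(H)$, an element $z$ lies in $C$ precisely when $z^H=\{z\}$ is a singleton. Since $z$ commutes with $H$ we have $x^Hz=\{x^hz:h\in H\}=(xz)^H$, so the equality $x^HC=x^H$ is equivalent to $(xz)^H=x^H$ for every $z\in C$, i.e. to $xz\in x^H$, i.e. to $z=x^{-1}x^h=[x,h]$ for some $h\in H$. Hence, once the first claim is established, for any $K$ not contained in $C$ I would choose $x\in K\setminus C$ and read off, for every $z\in C$, that $z\in\{[x,h]:h\in H\}\subseteq[K,H]$, giving $C_G(H)\leq[K,H]$. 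So the whole lemma reduces to proving that $xz\in x^H$ for every $x\in G\setminus C$ and every $z\in C$.

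For that core statement, the idea is to feed a carefully chosen pair of classes into the AC dichotomy so that a central element is forced to appear in the product. I would fix $x\notin C$ and $z\in C$, set $P=(x^{-1})^H$ and $Q=(xz)^H$, and examine the product $P\cdot Q$. Because $x^{-1}\in P$, $xz\in Q$ and $x^{-1}(xz)=z$, the central element $z$ lies in the support of $P\cdot Q$. The AC hypothesis then yields a dichotomy: either $P\neq Q^{*}$, in which case $P\cdot Q=(x^{-1}\cdot xz)^H=z^H=\{z\}$ is a single class, or else $P=Q^{*}$.

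The key step is to rule out the first alternative. If the support of $P\cdot Q$ were $\{z\}$, then every product $ab$ with $a\in P$ and $b\in Q$ would equal $z$; taking $a=x^{-1}$ and $b=(xz)^h=x^hz$ gives $x^{-1}x^hz=[x,h]z$, so $[x,h]=e$ for all $h\in H$ and therefore $x\in C$, contradicting $x\notin C$. Thus $P=Q^{*}$, that is $(x^{-1})^H=((xz)^{-1})^H$. This says $(xz)^{-1}$ is $H$-conjugate to $x^{-1}$, so $(xz)^{-1}=(x^h)^{-1}$ for some $h\in H$, whence $xz=x^h\in x^H$, exactly the required conclusion.

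I expect the only delicate point to be pinning down what ``$x^H\cdot y^H=(xy)^H$'' means and making the support argument airtight: under either reading (equality of supports, or proportionality of the associated class sums) the support of $P\cdot Q$ must be the single class $z^H=\{z\}$, and the explicit element $[x,h]z$ with $x^h\neq x$ already violates this. Once that contradiction is secured, the remainder is routine bookkeeping with the commutator identity $x^{-1}x^h=[x,h]$ and the fact that $z$ commutes with $H$, so I do not anticipate any further obstacle.
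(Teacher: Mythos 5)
Your proof is correct and follows essentially the same route as the paper: both reduce the first claim to showing $xz\in x^H$, apply the hypothesis to a pair of classes whose product collapses to the singleton class of a central element, and derive a contradiction (the paper via $|x^H|>1$, you via the explicit element $[x,h]z$), after which the commutator identity gives $C_G(H)\leq [K,H]$ exactly as in the paper.
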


\begin{proof}
    Let $x\in G\setminus C_G(H)$ and $z\in C_G(H)$. Then $x^H\neq (z^{-1})^H$, so $z^Hx^H=(zx)^H$. Suppose that 
    \[x^H\neq x^Hz=x^Hz^H=z^Hx^H=(zx)^H=((zx)^{-1})^{-1})^H.\]
    Then the hypothesis gives $x^H((zx)^{-1})^H=(xx^{-1}z^{-1})^H=(z^{-1})^H=\{z^{-1}\}$. 
    But $|x^H|>1$ and so this is a contradiction. Thus, $x^H=x^Hz$ for $z\in C_H(G)$, so $x^H=x^HC_G(H)$. 

    % Say $y\in x^H$ with $x\neq y$. Then we have that $y(x^{-1}z^{-1})\in x^H(x^{-1}z^{-1})^H$. So $z^{-1}=yx^{-1}z^{-1}$, we have $e=yx^{-1}$. So $x=y$, which is a contradiction.

    Now let $K\subseteq G$ such that $K\not\subseteq C_G(H)$. Fix $x\in K\setminus C_G(H)$. Then 
    \[C_G(H)=[x,e]C_G(H)\subseteq [x,H]C_G(H)=x^{-1}x^HC_G(H)=x^{-1}x^H=[x,H]\subseteq [K,H].\qedhere\]
    % We note that $[K,H]$ is the subgroup of $G$ generated by all commutators of the form $[k,h]$ where $k\in K$ and $h\in H$. Then since $[K,H]$ is a subgroup of $G$ and $C_G(H)\subseteq [K,H]$ we have $C_G(H)\leq [K,H]$.
\end{proof}

We now give necessary conditions for when $(G,H)$ is a SGP and $T(G,\C[G]^H)$ is AC.

\begin{Thm}\label{thm:sgpequiv1}
    Suppose that $G$ is nonabelian, and $(G,H)$ is a SGP. If $T(G,\C[G]^H)$ is AC then: 
    \begin{enumerate}[label={\normalfont(\arabic*)}]
        \item $T(H)$ is AC;
         \item $H\normal G$;
         \item $C_G(H)\leq H$;
        % \item for all $x\in G\setminus H$ we have $x^H=xH$
        \item  $G/H$ is abelian.
    \end{enumerate}
\end{Thm}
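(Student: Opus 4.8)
The plan is to prove the normality statement (2) first and then read off (3) and (4) from it, while (1) is independent. Throughout I use that, by Theorem \ref{thm:sequiv}, AC of $T(G,\C[G]^H)$ is equivalent to the \emph{product condition}: for all $x,y\in G$ with $x^H\neq(y^{-1})^H$ one has $x^Hy^H=(xy)^H$ as sets of group elements. This is exactly the hypothesis of Lemma \ref{lem:centralizerh}. For (1) I would restrict the product condition to $x,y\in H$: for $h\in H$ the $H$-class $h^H=\{k^{-1}hk:k\in H\}$ lies in $H$ and coincides with the conjugacy class of $h$ in the group $H$, and for $x,y\in H$ the product $x^Hy^H$ is computed inside $\C[H]$ and agrees with the product in $Z(\C[H])$. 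Hence the product condition for $\C[G]^H$, restricted to $H$, is precisely the product condition characterizing AC of $Z(\C[H])$; since $Z(\C[H])$ is always a commutative Schur ring, Theorem \ref{thm:sequiv} applies and gives that $T(H)$ is AC.

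The crux is (2). I would reformulate it as $[G,H]\le H$, equivalently: every $H$-class $g^H$ lies in a single left coset of $H$ (for $g\in H$ this is automatic, so the content is for $g\in G\setminus H$). The main tool is the product condition applied against the element $\sigma_H=\sum_{x^H\subseteq H}x^H\in\C[G]^H$. For $g\notin H$ the product condition gives $g^H\cdot\sigma_H=\sum_{x^H\subseteq H}(gx)^H$, while as a set $g^H\cdot H=\bigcup_{a\in g^H}aH$; comparing supports shows that $S:=\bigcup_{a\in g^H}aH=\bigcup_{a\in gH}a^H$ is simultaneously a union of left cosets of $H$ and a union of $H$-classes, with $S\subseteq G\setminus H$. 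The goal is then to prove that $S$ is a single coset, which is exactly $g^H\subseteq gH$. A clean way to organize this is by induction on $|G|$, using that both the SGP property and the product condition pass to quotients $(G/N,H/N)$ for $N\normal G$ with $N\le H$: the former by Lemma \ref{lem:sgpmod}, the latter because the quotient map $\C[G]\to\C[G/N]$ is a ring homomorphism carrying non-dual classes to non-dual classes. Taking $N=\mathrm{core}_G(H)$ (the largest normal subgroup of $G$ contained in $H$): if $G/N$ is abelian then $G'\le N\le H$ forces $H\normal G$; if $G/N$ is nonabelian the inductive hypothesis gives $H/N\normal G/N$, hence $H\normal G$; the only remaining possibility is $\mathrm{core}_G(H)=1$.

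Once (2) is known, (3) and (4) are short. For (4): with $H\normal G$, Lemma \ref{lem:sgpmod} applied with $N=H$ shows $(G/H,\{e\})$ is a SGP, i.e.\ $\C[G/H]$ is commutative, i.e.\ $G/H$ is abelian. For (3): $H\normal G$ gives $[G,H]\le H$; moreover $C_G(H)\neq G$, since otherwise $H\le Z(G)$, every $H$-class would be a singleton, and $\C[G]^H=\C[G]$ would be noncommutative, contradicting that $(G,H)$ is a SGP. Hence Lemma \ref{lem:centralizerh} applies with $K=G$ and yields $C_G(H)\le[G,H]\le H$.

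The main obstacle is the remaining core-free case of (2): ruling out $\mathrm{core}_G(H)=1$ (with $H\neq\{e\}$, which holds since $G$ is nonabelian, and $G$ nonabelian), equivalently showing directly that the self-dual set $S$ above is a single coset. Here I would exploit Lemma \ref{lem:centralizerh} --- both $C_G(H)\le[G,H]$ and the two-sided invariance $C_G(H)\,g^H\,C_G(H)=g^H$ for $g\notin C_G(H)$ --- together with the observation that all $H$-classes comprising $S$ share the same coset-spread $r=[H:H\cap gHg^{-1}]$, in order to force $r=1$ and contradict core-freeness. The delicate point throughout is that the product condition only controls products of \emph{non-dual} classes, so any counting argument pinning down the coset-spread of $g^H$ must be routed through classes lying inside $H$ rather than through the excluded dual product $g^H\cdot(g^{-1})^H$; I expect this to be the hardest step.
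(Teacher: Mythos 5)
Your items (1), (3), and (4) are fine: (1) matches the paper's argument, (3) is essentially the paper's (it also rules out $G\subseteq C_G(H)$ and then invokes Lemma \ref{lem:centralizerh} with $K=G$), and your derivation of (4) from Lemma \ref{lem:sgpmod} with $N=H$ is actually slicker than the paper's coset computation. But all three of these sit downstream of (2), and your proof of (2) has a genuine gap: the induction on $|G|$ via $N=\mathrm{core}_G(H)$ only reduces the problem to the case $\mathrm{core}_G(H)=1$, and you explicitly leave that case unresolved ("I expect this to be the hardest step"). Since a non-normal $H$ has proper core, the core-free case is exactly where all the content of (2) lives, so nothing has been proved. (There is also a secondary worry with the inductive step itself: you need the full AC/product condition, not just the SGP property, to descend to $(G/N,H/N)$; the paper verifies this carefully in Proposition \ref{prop:sgpmod}, and the same argument works for general $N\normal G$ with $N\le H$, so that part is repairable --- but it does not rescue the base case.)

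The paper's proof of (2) is direct and avoids induction entirely. Fix $g\in G\setminus H$. If $gH\subseteq g^H$, a cardinality count gives $gH=g^H$ and then $Hg\subseteq gH$ follows from $(hg)^H=(gh)^H$. Otherwise pick $h\in H$ with $(gh)^H\neq g^H$; the product condition applied to the \emph{non-dual} pair $(gh)^H,(g^{-1})^H$ gives $(gh)^H(g^{-1})^H=h^H$, and expanding $(gh)^H=g^Hh^H$ yields $g^H(g^{-1})^Hh^H=h^H$, whence $g^H(g^{-1})^H\subseteq H$ and therefore $gsg^{-1}s^{-1}\in H$ for all $s\in H$, i.e.\ $gHg^{-1}\subseteq H$. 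Note how this sidesteps the obstacle you correctly identified --- that the product condition says nothing about the dual product $g^H(g^{-1})^H$ directly --- by reaching that set through the non-dual product $(gh)^H(g^{-1})^H$ and the auxiliary class $h^H$ inside $H$. If you want to salvage your approach, this is the device you are missing.
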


\begin{proof}
    $(1)$: Assume $T(G,\C[G]^H)$ is AC. If $T(H)$ is not AC, then by Theorem \ref{thm:sequiv}, there are $x,y\in H$ such that $x^H\neq (y^{-1})^H$ and $x^H\cdot y^H\neq (xy)^H$. Then by Theorem \ref{thm:sequiv}, $T(G,\C[G]^H)$ is not AC. Thus, $T(H)$ is AC.

    $(2)$: We show that $gH=Hg$ for all $g\in G\setminus H$. We have two cases.\\
    {\bf Case 1: } $gH\subseteq g^H$. Then $|gH|\leq |g^H|\leq |H|=|gH|$, so $|gH|=|g^H|$ and $gH=g^H$. Now let $h\in H$. As $gh\in gH=g^H$, we have $(hg)^H=(gh)^H=g^H=gH$. So $hg\in gH$. Thus $Hg\subseteq gH$ and $Hg=gH$.\\
    {\bf Case 2: }$gH\not\subseteq g^H$. Then there exists $h\in H$ such that $gh\not\in g^H$, and so $(gh)^H\neq g^H$. By Theorem \ref{thm:sequiv}, 
    \[(ghg^{-1})^H=(gh)^H\cdot (g^{-1})^H=(g^{-1})^H\cdot (gh)^H=h^H.\] 
    
    Since $(gh)^H\neq g^H$ and $h^H\neq (g^{-1})^H$ by Theorem \ref{thm:sequiv}, 
    \[h^H=(ghg^{-1})^H=(gh)^H\cdot (g^{-1})^H=g^H\cdot h^H\cdot (g^{-1})^H=g^H\cdot (g^{-1})^H\cdot h^H.\]
    Then for $y\in g^H\cdot (g^{-1})^H$ we have $yh\in g^H\cdot (g^{-1})^H\cdot h^H=h^H\subseteq H$. Thus, $y\in H$ and $g^H\cdot (g^{-1})^H\subseteq H$. 
    
    For all $s\in H$, $gsg^{-1}s^{-1}\in g^H\cdot (g^{-1})^H\subseteq H$. Then $gsg^{-1}\in H$, and so $gHg^{-1}\subseteq H$. This implies $Hg=gH$, and so $H\normal G$.
    
    $(3)$: First we show $G\not\subseteq C_G(H)$. Suppose $G\subseteq C_G(H)$. Then for $g\in G$, $g^H=\{g\}$. Thus for $x,y\in G$ we have $xy=x^H\cdot y^H=y^H\cdot x^H=yx$, and so $G$ is abelian, a contradiction. Hence, $G\not\subseteq C_G(H)$. By Lemma \ref{lem:centralizerh}, $C_G(H)\leq [G,H]$. As $H\normal G$, we have $[G,H]\leq H$, so $C_G(H)\leq H$. 

   $(4)$: Fix $g\in G$. Let $y=hgh^{-1}\in g^H$ for $h\in H$. Then $y=hgh^{-1}\in HgH=gHH=gH$, so $g^H\subseteq gH$ for $g\in G$. Now let $xH,yH\in G/H$. We show $xH$ and $yH$ commute. There are two cases:\\
   {\bf Case 1: } $x^H=(y^{-1})^H$. Then $y^{-1}\in x^H\subseteq xH$ and $xH=y^{-1}H$. Thus $(xH)(yH)=(y^{-1}H)(yH)=H=(yH)(y^{-1}H)=(yH)(xH)$.\\
   {\bf Case 2: } $x^H\neq (y^{-1})^H$. By Theorem \ref{thm:sequiv}, $(xy)^H=x^Hy^H=y^Hx^H=(yx)^H$. 
    Hence, $yx\in (xy)^H\subseteq xyH$. However, $yx\in yxH$ and so $yxH=xyH$. Thus, $(xH)(yH)=xyH=yxH=(yH)(xH)$, and so $G/H$ is abelian.  
\end{proof}

\begin{Thm}\label{thm:sgpequivab}
     Suppose that $G$ is nonabelian and $(G,H)$ is a SGP. If $T(G,\C[G]^H)$ is AC and $H$ is abelian then  for $x\in G\setminus H$, we have $x^H=xH$.
\end{Thm}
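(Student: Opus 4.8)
The plan is to establish the two inclusions $x^H\subseteq xH$ and $xH\subseteq x^H$ separately, the second being the substantive one, and then combine them. First I would record the consequences of Theorem \ref{thm:sgpequiv1} that apply here: since $G$ is nonabelian, $(G,H)$ is a SGP, and $T(G,\C[G]^H)$ is AC, we obtain $H\normal G$ and $C_G(H)\leq H$. The crucial observation is that, because $H$ is abelian, every element of $H$ centralizes $H$, so $H\leq C_G(H)$; together with $C_G(H)\leq H$ this forces $C_G(H)=H$. Consequently the set $G\setminus C_G(H)$ appearing in Lemma \ref{lem:centralizerh} is exactly $G\setminus H$, which is the set of elements we care about.

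For the inclusion $x^H\subseteq xH$, I would fix $x\in G\setminus H$ and $h\in H$ and use $H\normal G$ to write $h^{-1}xh=x\,(x^{-1}h^{-1}xh)$, noting that $x^{-1}h^{-1}xh\in H$; hence $h^{-1}xh\in xH$ and so $x^H\subseteq xH$. This is the same short computation already used in the proof of Theorem \ref{thm:sgpequiv1}(4), so it requires no new idea.

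For the reverse inclusion I would invoke Lemma \ref{lem:centralizerh}. Its hypothesis is precisely the multiplicative property that $x^H\neq (y^{-1})^H$ implies $x^H\cdot y^H=(xy)^H$, and this is exactly the characterization of AC supplied by Theorem \ref{thm:sequiv} (with principal sets $P_i=g_i^H$ and $P_j^*=(g_j^{-1})^H$, so that the exclusion $P_i\neq P_j^*$ reads $x^H\neq (y^{-1})^H$). Since $T(G,\C[G]^H)$ is AC, this hypothesis holds, and the lemma yields $x^HC_G(H)=x^H$ for every $x\in G\setminus C_G(H)$. Using $C_G(H)=H$, this becomes $x^HH=x^H$ for $x\in G\setminus H$. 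As $x\in x^H$, for every $h\in H$ we get $xh\in x^HH=x^H$, i.e.\ $xH\subseteq x^H$. Combining the two inclusions gives $x^H=xH$, as desired.

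I expect the argument to be short, since the analytic work is already packaged inside Lemma \ref{lem:centralizerh}; the only genuine steps are recognizing that the AC hypothesis of that lemma is furnished verbatim by Theorem \ref{thm:sequiv}, and that abelianness of $H$ upgrades $C_G(H)\leq H$ to $C_G(H)=H$, which is what converts the lemma's conclusion $x^HC_G(H)=x^H$ into $xH\subseteq x^H$. The only points needing care are matching the exclusion $P_i\neq P_j^*$ in the AC condition with the lemma's hypothesis, and confirming that $x\notin C_G(H)$ is equivalent to $x\notin H$ here; both are immediate once $C_G(H)=H$ is in hand.
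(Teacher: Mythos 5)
Your proposal is correct and follows essentially the same route as the paper: both first upgrade $C_G(H)\leq H$ (from Theorem \ref{thm:sgpequiv1}) to $C_G(H)=H$ using abelianness of $H$, then apply Lemma \ref{lem:centralizerh} to obtain the hard inclusion $xH\subseteq x^H$. The only (immaterial) differences are that the paper quotes the lemma's second conclusion $C_G(H)\leq[g,H]$ rather than $x^HC_G(H)=x^H$, and closes with the counting argument $|g^H|\leq|H|=|gH|$ instead of your normality computation for the easy inclusion.
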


\begin{proof}
    By Theorem \ref{thm:sgpequiv1}, $C_G(H)\leq H$. Since $H$ is abelian $H\subseteq C_G(H)$ and we have $C_G(H)=H$.

    Let $g\in G\setminus H$. By Lemma \ref{lem:centralizerh}, $H=C_G(H)\leq [g,H]$. Fix $h\in H$. Then, $h=g^{-1}y^{-1}gy$ for some $y\in H$, and so $gh=y^{-1}gy$. Thus, $gH\subseteq g^H$. Then as $|g^H|\leq |H|=|gH|$ we have $g^H=gH$. 
\end{proof}

\begin{Prop}\label{prop:sgpmod}
    Suppose $G$ is nonabelian and $(G,H)$ is a SGP. Assume $T(G,\C[G]^H)$ is AC. Then $(G/H',H/H')$ is a SGP and $T\left(G/H',\C[G/H']^{H/H'}\right)$ is AC.
\end{Prop}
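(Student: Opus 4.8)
The plan is to deduce both assertions from the corresponding facts about $(G,H)$, transported along the canonical projection $\pi\colon G\to G/H'$. First I would establish that $H'\normal G$: by Theorem \ref{thm:sgpequiv1} we have $H\normal G$, and since $H'=[H,H]$ is characteristic in $H$ it is normal in $G$. As $H'\leq H$, Lemma \ref{lem:sgpmod} then applies directly and yields that $(G/H',H/H')$ is a SGP. This settles the first claim, and, crucially, guarantees that $\C[G/H']^{H/H'}$ is a \emph{commutative} Schur ring, so that the characterization of Theorem \ref{thm:sequiv} is available for the quotient.

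For the second claim I would first record how $\pi$ interacts with principal sets. For $g\in G$ and $h\in H$ one has $\overline{h}^{-1}\overline{g}\,\overline{h}=\pi(g^h)$, so as $\overline{h}$ ranges over $H/H'$ the $(H/H')$-class of $\overline{g}$ is exactly $\pi(g^H)$. Thus every principal set of $\C[G/H']^{H/H'}$ is the image under $\pi$ of a principal set of $\C[G]^H$. Moreover, since $\pi$ is a group homomorphism it commutes with set products, $\pi(P)\pi(P')=\pi(PP')$, and it intertwines the dual operation, $\pi(P^*)=\pi(P)^*$ (because $(x^H)^*=(x^{-1})^H$).

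With these identifications in hand, I would verify the condition of Theorem \ref{thm:sequiv} for $G/H'$ directly. Take $\overline{x},\overline{y}\in G/H'$ lying in principal sets $Q_i,Q_j$ of the quotient with $\overline{x}\,\overline{y}\in Q_h$ and $Q_i\neq Q_j^*$, and choose lifts $x,y\in G$. Writing $P=x^H$ and $P'=y^H$, we have $\pi(P)=Q_i$, $\pi(P')=Q_j$, and $\pi((xy)^H)=Q_h$. The hypothesis $Q_i\neq Q_j^*$ forces $P\neq (P')^*$, since $P=(P')^*$ would give $Q_i=\pi(P)=\pi(P')^*=Q_j^*$. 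Hence the AC hypothesis for $T(G,\C[G]^H)$ together with Theorem \ref{thm:sequiv} gives $PP'=(xy)^H$ in $G$. Applying $\pi$ and using $\pi(PP')=\pi(P)\pi(P')$ yields $Q_iQ_j=\pi((xy)^H)=Q_h$, which is exactly the required condition. Therefore $T(G/H',\C[G/H']^{H/H'})$ is AC.

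The computations here are routine; the only point demanding care is the \emph{direction} in which the hypotheses travel between $G$ and $G/H'$. The equality $PP'=(xy)^H$ that we invoke upstairs is only guaranteed when $P\neq (P')^*$, so one must check that the quotient's separation condition $Q_i\neq Q_j^*$ genuinely lifts to the stronger-looking separation $P\neq (P')^*$ in $G$ — and it does, precisely because $\pi(P^*)=\pi(P)^*$, so that $P=(P')^*$ would collapse to $Q_i=Q_j^*$. Everything else is a formal pushforward along the homomorphism $\pi$, which is why no deeper obstruction arises.
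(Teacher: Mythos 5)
Your proposal is correct and follows essentially the same route as the paper: establish $H'\normal G$ via $H\normal G$ (Theorem \ref{thm:sgpequiv1}) and characteristicity, apply Lemma \ref{lem:sgpmod} for the SGP claim, lift the separation condition $Q_i\neq Q_j^*$ to $x^H\neq (y^{-1})^H$ by the same contrapositive argument, and push the resulting equality $x^Hy^H=(xy)^H$ forward to the quotient. The only cosmetic difference is that you invoke $\pi(PP')=\pi(P)\pi(P')$ abstractly where the paper verifies the two inclusions by explicit element computations.
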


\begin{proof}
    By Theorem \ref{thm:sgpequiv1}, $H\normal G$. As $H'$ is a characteristic subgroup of $H$, $H'\normal G$. Then by Lemma \ref{lem:sgpmod}, $(G/H',H/H')$ is a SGP, so $\C[G/H']^{H/H'}$ is a commutative Schur ring.

    Now let $\overline{G}=G/H'$ and $\overline{H}=H/H'$. For $g\in G$ we let $\overline{g}=gH'\in \overline{G}$. Let $\overline{x},\overline{y}\in \overline{G}$ such that $\overline{x}^{\overline{H}}\neq (\overline{y^{-1}})^{\overline{H}}$. If $x^H=(y^{-1})^H$, there is a $h\in H$ such that $x=hy^{-1}h^{-1}$. Then
    \[\overline{x}=xH'=hy^{-1}h^{-1}H'=hH'h^{-1}H'y^{-1}h^{-1}H'=\overline{h}\cdot\overline{y^{-1}}\overline{h^{-1}}\in (\overline{y^{-1}})^{\overline{H}}.\]
    This implies $\overline{x}^{\overline{H}}= (\overline{y^{-1}})^{\overline{H}}$, which is false, so $x^H\neq (y^{-1})^H$. Then by Theorem \ref{thm:sequiv}, $x^H\cdot y^H=(xy)^H$. 

    We claim $\overline{x}^{\overline{H}}\cdot\overline{y}^{\overline{H}}=(\overline{xy})^{\overline{H}}$. Notice that $\overline{xy}\in \overline{x}^{\overline{H}}\overline{y}^{\overline{H}}$, so $(\overline{xy})^{\overline{H}}\subseteq \overline{x}^{\overline{H}}\cdot\overline{y}^{\overline{H}}$. Now let $\overline{z_1}\in \overline{x}^{\overline{H}}$, $\overline{z_2}\in \overline{y}^{\overline{H}}$. Then $\overline{z_1}=\overline{h_1}\overline{x}\overline{h_1^{-1}}$ and $\overline{z_2}=\overline{h_2}\overline{y}\overline{h_2^{-1}}$ for $\overline{h_1},\overline{h_2}\in \overline{H}$. Notice that $h_1xh_1^{-1}\in x^H$ and $h_2yh_2^{-1}\in y^H$. Then $(h_1xh_1^{-1})(h_2yh_2^{-1})\in x^Hy^H=(xy)^H$. Therefore there is a $h_3\in H$ such that $(h_1xh_1^{-1})(h_2yh_2^{-1})=h_3xyh_3^{-1}$. Then
    \begin{align*}
\overline{z_1}\overline{z_2}&=\left(\overline{h_1}\overline{x}\overline{h_1^{-1}}\right)\left(\overline{h_2}\overline{y}\overline{h_2^{-1}}\right)=\left(h_1xh_1^{-1}H'\right)\left(h_2yh_2^{-1}H'\right)=\left(h_1xh_1^{-1}h_2yh_2^{-1}\right) H'\\
    &=\left(h_3xyh_3^{-1}\right)H'=h_3H'(xy)H'h_3^{-1}H'=\overline{h_3}\overline{xy}\overline{h_3^{-1}}\in (\overline{xy})^H.
        \end{align*}
    So $\overline{x}^{\overline{H}}\cdot \overline{y}^{\overline{H}}\subseteq (\overline{xy})^{\overline{H}}$, and thus $\overline{x}^{\overline{H}}\cdot \overline{y}^{\overline{H}}= (\overline{xy})^{\overline{H}}$. Therefore, for all $\overline{x},\overline{y}\in \overline{G}$ with $\overline{x}^{\overline{H}}\neq (\overline{y^{-1}})^{\overline{H}}$, we have $\overline{x}^{\overline{H}}\cdot \overline{y}^{\overline{H}}= (\overline{xy})^{\overline{H}}$. Then by Theorem \ref{thm:sequiv}, $T\left(G/H',\C[G/H']^{H/H'}\right)$ is AC.  
\end{proof}

\begin{Thm}\label{thm:sgpequiv}
    Let $G$ be nonabelian with $H\leq G$ and $(G,H)$ be a SGP. If $T(G,\C[G]^H)$ is AC then for $x\in G\setminus H$, we have $x^H=xH$.
\end{Thm}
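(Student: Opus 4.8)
The plan is to bootstrap from the abelian case already established in Theorem~\ref{thm:sgpequivab}, using the quotient construction of Proposition~\ref{prop:sgpmod}, and to organize the whole argument as an induction on $|G|$. First note that, since $H\normal G$ by Theorem~\ref{thm:sgpequiv1} we always have $x^H\subseteq xH$, so the desired equality $x^H=xH$ is equivalent to $C_H(x)=\{e\}$, i.e.\ to $x$ acting fixed-point-freely on $H$ by conjugation --- a Frobenius-type condition (compare Proposition~\ref{prop:frobeq}). If $H$ is abelian the statement is exactly Theorem~\ref{thm:sgpequivab}, which is the base case. If $H$ is nonabelian then $H'\neq\{e\}$, and I would pass to $\overline G=G/H'$ with $\overline H=H/H'$: by Proposition~\ref{prop:sgpmod} the pair $(\overline G,\overline H)$ is again a SGP whose Terwilliger algebra is AC, and now $\overline H$ is abelian. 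Provided $\overline G$ is nonabelian, Theorem~\ref{thm:sgpequivab} applies upstairs in the quotient and gives $\overline x^{\overline H}=\overline x\,\overline H$ for every $\overline x\in\overline G\setminus\overline H$.

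Interpreting this downstairs, $\overline x^{\overline H}=\overline x\,\overline H$ says precisely that conjugation by $x$ is fixed-point-free on $H/H'$: if $h\in C_H(x)$ then $\overline h\in C_{\overline H}(\overline x)=\{\overline e\}$, so $C_H(x)\leq H'$, and moreover $x^H$ surjects onto the full coset $xH/H'$, i.e.\ $x^H$ meets every $H'$-coset contained in $xH$. At the same time Theorem~\ref{thm:sgpequiv1} gives $C_G(H)\leq H$, hence $C_G(H)=Z(H)$, and Lemma~\ref{lem:centralizerh} then shows both that $x^HZ(H)=x^H$ (so $x^H$ is a union of $Z(H)$-cosets) and, taking $K=\{x\}$, that $xZ(H)\subseteq x^H$. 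When $Z(H)=H'$ --- for instance when $H$ is a Camina $p$-group of nilpotency class $2$ --- these facts combine at once: $x^H$ is then a union of $H'$-cosets meeting every $H'$-coset of $xH$, which forces $x^H=xH$. So the reduction closes immediately in such cases.

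The main obstacle is the general case, where $Z(H)\lneq H'$ and the reduction yields only $C_H(x)\leq H'$ rather than $C_H(x)=\{e\}$; equivalently, I must still rule out nontrivial fixed points of $x$ inside $H'$. To attack this I would sharpen the induction. Since $H$ is solvable (by Theorem~\ref{thm:acclassify}, as $T(H)$ is AC) and $G/H$ is abelian by Theorem~\ref{thm:sgpequiv1}, the group $G$ is solvable, so I may choose a minimal normal subgroup $N\normal G$ with $N\leq H'$, which is elementary abelian. Quotienting by $N$ in place of $H'$ (the argument of Proposition~\ref{prop:sgpmod} goes through verbatim for any $G$-normal subgroup of $H$) and applying the inductive hypothesis to $G/N$ gives $C_H(x)\leq N$, reducing everything to $C_N(x)=\{e\}$. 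Assuming $e\neq c\in C_N(x)$, I would feed $x$ and $c$ into the sharp product law of Theorem~\ref{thm:sequiv}: since $x\notin H$ while $c\in H$ we have $x^H\neq(c^{-1})^H$, hence $x^Hc^H=(xc)^H$; using $cx=xc$ together with the inductive bound $C_H(xc)\leq N$ one gets $|x^Hc|=|(xc)^H|$ and therefore $x^Hc=(xc)^H=\{x^hc^h\colon h\in H\}$, which produces two distinct $H$-conjugates of $x$ in a common $N$-coset and contradicts the fiber count forced by the surjection $x^H\to xH/N$. I expect this final step --- converting a nontrivial fixed point in the minimal normal subgroup into a violation of the AC condition --- to be the delicate, computational heart of the proof; a safe fallback is to run the last step type-by-type over the possibilities for $H$ supplied by Theorem~\ref{thm:acclassify}, in each of which the gap between $Z(H)$ and $H'$ is controlled. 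Finally, I must separately justify the standing assumption that $\overline G$ (resp.\ $G/N$) is nonabelian, since otherwise $G'=H'$ would make $x$ centralize $H/H'$ and the conclusion would genuinely fail; ruling this out is itself forced by the fixed-point-free behaviour coming from the AC hypothesis.
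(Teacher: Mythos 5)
Your overall strategy --- reduce to the abelian quotient $G/H'$ via Proposition~\ref{prop:sgpmod} and Theorem~\ref{thm:sgpequivab}, which yields $C_H(x)\leq H'$ and the fact that $x^H$ meets every $H'$-coset of $xH$ --- is exactly the first half of the paper's argument, and your observation that Lemma~\ref{lem:centralizerh} closes the case $Z(H)=H'$ is correct. But the general case, which you yourself flag as ``the delicate, computational heart of the proof,'' is a genuine gap, and the specific contradiction you sketch does not work. Passing to a minimal normal $N\normal G$ with $N\leq H'$ and reducing to $C_N(x)=\{e\}$ is fine, and one can indeed show $x^Hc=(xc)^H$ for $e\neq c\in C_N(x)$; but the conclusion you draw from it --- ``two distinct $H$-conjugates of $x$ in a common $N$-coset contradicts the fiber count'' --- contradicts nothing. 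The fibers of $x^H\to xH/N$ have size $|N|/|C_H(x)|$, which is $\geq 1$ and strictly greater than $1$ whenever $1<|C_H(x)|<|N|$, so finding two conjugates in one $N$-coset is entirely consistent with $C_H(x)\neq\{e\}$. The fallback of checking types from Theorem~\ref{thm:acclassify} is also not carried out, and the auxiliary claim that $G/H'$ (resp.\ $G/N$) is nonabelian is asserted to be ``forced by the AC hypothesis'' without proof.

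What the paper does instead, and what your plan is missing, is to exploit the AC product law together with the Con-cos structure of $H$ itself (from Theorems~\ref{thm:sgpequiv1} and~\ref{thm:acclassify}: $a^H=aH'$ for $a\in H\setminus H'$). Assuming some $h\in H$ has $gh\notin g^H$, one first shows $g^H\cdot(g^{-1})^H\subseteq H$ and then, by testing each class $x^H\subseteq g^H(g^{-1})^H$ against the identity $x^H\cdot h^H=h^H$ and using $x^H=xH'$ for $x\notin H'$, that in fact $g^H\cdot(g^{-1})^H\subseteq H'$, i.e.\ $g^H\subseteq gH'$. Combined with the quotient step this forces every such ``bad'' $h$ into $H'$; then solvability of $H$ supplies $z\in H\setminus H'$ with $gz\in g^H$, and the relation $g^H=g^Hz^H=g^HH'z$ propagates membership in $g^H$ across all of $gH'$ as well. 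This replacement of your fixed-point count by explicit manipulation of class products $g^H(g^{-1})^H$ and $g^Hz^H$ is the step you would need to supply to complete the proof.
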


\begin{proof}
    Fix $g\in G\setminus H$. By Theorem \ref{thm:sgpequiv1}, $H\normal G$ which implies $g^H\subseteq HgH= gH$. Now suppose $h\in H$ such that $gh\not\in g^H$. Then $(gh)^H\neq g^H$. By Theorem \ref{thm:sequiv}, 
    \[(ghg^{-1})^H=(gh)^H(g^{-1})^H=(g^{-1})^H(gh)^H=h^H.\] 
    However we note, since $g\in G\setminus H$, that $g^H\neq (h^{-1})^H$, so by Theorem \ref{thm:sequiv}, we also have
    \[h^H=(ghg^{-1})^H=(gh)^H(g^{-1})^H=g^Hh^H(g^{-1})^H=g^H(g^{-1})^Hh^H.\]

    Let $x\in g^H\cdot (g^{-1})^H$. As $\C[G]^H$ is a Schur ring we can write $\overline{g^H}\cdot \overline{(g^{-1})^H}=\sum \lambda_k \overline{y_k^H}$ for some $y_k\in G$ and nonzero integers $\lambda_k$. Then
    \[\overline{g^H}\cdot \overline{(g^{-1})^H}\cdot \overline{h^H}=\sum \lambda_k \overline{y_k^H}\cdot \overline{h^H}.\]
    However, as $h^H=g^H(g^{-1})^Hh^H$, we know that $\overline{g^H}\cdot \overline{(g^{-1})^H}\cdot \overline{h^H}=\mu \overline{h^H}$ for some nonzero integer $\mu$. We thus have, $\mu \overline{h^H}=\sum \lambda_k \overline{y_k^H}\cdot \overline{h^H}$. This implies $h^H=(y_k^H)h^H$ for all $y_k^H$ with a nonzero coefficient $\lambda_k$ in $\sum \lambda_k \overline{y_k^H}$. We know $x\in g^H(g^{-1})^H$, so $x^H=y_i^H$ for some $i$. Therefore, $h^H=x^H\cdot h^H$. Then $xh=tht^{-1}$ for some $t\in H$. This implies $x=tht^{-1}h^{-1}\in H$. So $x\in H$. We claim that $x\in H'$. Suppose not. By Theorem \ref{thm:sgpequiv1}, $T(H)$ is AC, so by Theorem \ref{thm:acclassify}, $H$ is a Con-cos group. Therefore, for all $a\in H\setminus H'$, $a^H=aH'$. In particular, $x^H=xH'$. We consider two cases.\\
    {\bf Case 1: }$h\in H'$. Then $h^H\subseteq H'$, and $x^H\cdot h^H=xH'\cdot h^H=xH'$. However, $xH'\neq h^H$ contradicting $x^H\cdot h^H=h^H$.\\
    {\bf Case 2: }$h\not\in H'$. Then $h^H=hH'$, and $x^H\cdot h^H=xH'\cdot hH'=xhH'$. Notice $xhH'\neq hH'$ since $x\not\in H'$. Therefore, $x^H\cdot h^H\neq h^H$ in this case, contradicting $x^H\cdot h^H=h^H$.\\
    In both cases we get a contradiction, so $x\in H'$. As we chose $x\in g^H\cdot(g^{-1})^H$ arbitrarily, we have $g^H\cdot (g^{-1})^H\subseteq H'$. For all $y\in g^H$, we have $yg^{-1}\in g^H\cdot (g^{-1})^H\subseteq H'$. Then $y\in H'g$. Since $H\normal G$ we have $H'\normal G$. So $H'g=gH'$. Thus $y\in gH'$. As we chose $y\in g^H$ arbitrarily $g^H\subseteq gH'$, so $g^HH'\subseteq gH'$. 

    By Proposition \ref{prop:sgpmod}, $(G/H',H/H')$ is a SGP with $T(G/H',\C[G/H']^{H/H'})$ AC. Set $G/H'=\overline{G}$, $H/H'=\overline{H}$, and $\overline{x}=xH'$ for all $x\in G/H'$. Notice that $\overline{H}$ is abelian so by Theorem \ref{thm:sgpequivab}, for all $\overline{a}\in \overline{G}\setminus \overline{H}$ we have $\overline{a}\overline{H}=\overline{a}^{\overline{H}}$. Then as $g\in G\setminus H$, $\overline{g}\in \overline{G}\setminus \overline{H}$ and $\overline{g}\overline{H}=\overline{g}^{\overline{H}}$. We observe that $\overline{gh}\in \overline{g}\overline{H}=\overline{g}^{\overline{H}}$. Then there exists some $\overline{s}\in \overline{H}$ such that $\overline{gh}=\overline{s}\overline{g}\overline{s^{-1}}$, and $ghH'=sgs^{-1}H'$. So 
    \[gh\in (sgs^{-1})H'\subseteq (sgs^{-1})^HH'=g^HH',\]
    since $s\in H$. Therefore, $gh\in g^HH'\subseteq gH'$ by the above argument. So $h\in H'$. 
    
    Therefore, for all $h\in H$, if $gh\not\in g^H$, then $h\in H'$. Suppose $x\in H\setminus H'$. If $gx\not\in g^H$ then we have $x\in H'$, which is false. Hence, for $x\in H\setminus H'$, we have $gx\in g^H$.

    Recall that $H$ is a Con-cos group. Then $H$ is solvable by Corollary \ref{cor:consolv}, so there exists some $z\in H\setminus H'$ and $gz\in g^H$. Then as $g\in G\setminus H$, and $z\in H$ we have by Theorem \ref{thm:sequiv} that $g^H=(gz)^H=g^Hz^H$. Since $z\in H\setminus H'$ and $H$ is a Con-cos group $z^H=zH'$. Therefore, $g^H=g^HzH'=g^HH'z$. Then for all $h\in H'$, $ghz\in g^H$. So there exists some $y_h\in H$ such that $ghz=y_hgy_h^{-1}$. Then $gh=y_hgy_h^{-1}z^{-1}\in g^H\cdot (z^{-1})^H$. As $z^{-1}\in H$ and $g\in G\setminus H$, we have by Theorem \ref{thm:sequiv} that $g^H\cdot (z^{-1})^H=(gz^{-1})^H$. So $gh\in (gz^{-1})^H$. However, $z^{-1}\in H\setminus H'$ as $z\in H\setminus H'$. Then $gz^{-1}\in g^H$ by the above paragraph. So $(gz^{-1})^H=g^H$. Thus $gh\in g^H$, for all $h\in H'$, we have $gh\in g^H$. 

    Now let $ga\in gH$ for any $a\in H$. If $a\in H'$, then we just found $ga\in g^H$. If $a\in H\setminus H'$ we found above that $ga\in g^H$. Therefore, $ga\in g^H$ for all $a\in H$, so $gH\subseteq g^H$. Hence, $g^H=gH$. 
\end{proof}

Thus if $(G,H)$ is a SGP and $T(G,\C[G]^H)$ is AC, then $H\normal G$, $T(H)$ is AC, $G/H$ is abelian, and $x^H=xH$ for all $x\in G\setminus H$. We shall now prove that if these four conditions are met, then $(G,H)$ is a SGP and $T(G,\C[G]^H)$ is AC.

\begin{Prop}\label{thm:sgp}
    Let $G$ be a group. Suppose $H\normal G$ and $G/H$ is abelian. If for all $x\in G\setminus H$ we have $x^H=xH$, then $(G,H)$ is a SGP. 
\end{Prop}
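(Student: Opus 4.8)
The plan is to verify directly that the $H$-classes form a commutative Schur ring, by first describing them explicitly and then computing all products of class sums. The hypotheses pin down the $H$-classes completely: since $H\normal G$, for $x\in H$ the $H$-class $x^H$ is an ordinary conjugacy class of $H$ (and $\sum_{P_i\subseteq H}\overline{P_i}=\overline{H}$, where $\overline{S}:=\sum_{s\in S}s$ denotes a set sum in $\C[G]$), while for $x\in G\setminus H$ the hypothesis $x^H=xH$ says each nontrivial coset of $H$ is a single $H$-class. The class $\{e\}$ is present, and the partition is closed under inversion (the inverse of a conjugacy class of $H$ is again one, and $(gH)^{-1}=g^{-1}H$), so the only Schur-ring axiom requiring real work is closure of the span under multiplication.

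The computations rest on three identities that I would record first: $\overline{H}^{\,2}=|H|\,\overline{H}$; for every $g\in G$ one has $g\overline{H}=\overline{H}g$ (since conjugation by $g$ permutes $H$, using $H\normal G$), so that $\overline{gH}=g\overline{H}$; and for a conjugacy class $P_i$ of $H$, $\overline{P_i}\,\overline{H}=\overline{H}\,\overline{P_i}=|P_i|\,\overline{H}$ (as left or right multiplication by any element of $H$ permutes $H$). With these in hand I would split the product $\overline{P_i}\cdot\overline{P_j}$ into three cases. (A) Both classes lie in $H$: the product is taken in the commutative class algebra $Z(\C[H])$, so it is a nonnegative integer combination of conjugacy class sums of $H$. (B) $P_i$ a conjugacy class of $H$ and $P_j=gH$ a nontrivial coset: writing $\overline{P_i}\cdot g\overline{H}=g\,\overline{g^{-1}P_ig}\;\overline{H}=|P_i|\,g\overline{H}=|P_i|\,\overline{gH}$, using normality and the third identity, with the reverse order giving the same value. (C) $P_i=g_1H$ and $P_j=g_2H$ two nontrivial cosets: using $g_2^{-1}\overline{H}g_2=\overline{H}$ and $\overline{H}^{\,2}=|H|\,\overline{H}$ I get $\overline{g_1H}\cdot\overline{g_2H}=|H|\,\overline{g_1g_2H}$, which is $|H|$ times the single generator $\overline{g_1g_2H}$ when $g_1g_2\notin H$, and equals $|H|\,\overline{H}=|H|\sum_{P_k\subseteq H}\overline{P_k}$ when $g_1g_2\in H$. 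In every case the product lies in the span, establishing that the $H$-classes form a Schur ring.

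Commutativity then holds automatically in cases (A) and (B), and the only case needing an extra input is (C): there $\overline{g_1H}\cdot\overline{g_2H}=|H|\,\overline{g_1g_2H}$ and $\overline{g_2H}\cdot\overline{g_1H}=|H|\,\overline{g_2g_1H}$ agree precisely when $g_1g_2H=g_2g_1H$, i.e.\ when $g_1g_2g_1^{-1}g_2^{-1}\in H$, which is exactly what $G/H$ abelian (equivalently $G'\leq H$) guarantees. The argument is largely routine once the three identities are in place; the only delicate bookkeeping is the collapse in case (C) when $g_1g_2\in H$, where the coset sum must be re-expanded as a sum of the conjugacy-class generators of $H$. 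The hypothesis $x^H=xH$ enters precisely to ensure that every coset sum $g\overline{H}$ is itself a single class generator, which is what keeps the products in cases (B) and (C) inside the span rather than merely inside $\C[G]$.
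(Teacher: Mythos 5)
Your proposal is correct and follows essentially the same route as the paper: the same three-case split (both classes in $H$, both outside $H$, one of each) and the same coset-sum computations $\overline{xH}\cdot\overline{yH}=|H|\,\overline{xyH}$ and $\overline{P_i}\cdot\overline{gH}=|P_i|\,\overline{gH}$, with $G'\leq H$ supplying commutativity in the two-coset case. The only difference is that you also verify the Schur-ring closure axioms explicitly, whereas the paper simply invokes the standard fact that the $H$-classes of $G$ always determine a Schur ring and checks only commutativity.
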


\begin{proof}
The classes $\{g^H\}$, determine an $S-$ring. We only need to check commutativity. 

Let $x,y\in H$ and let $x^H=P_x$, $y^H=P_y$. Then as $\mathcal{G}(H)$ is commutative we have $\overline{P_x}\cdot \overline{P_y}=\overline{P_y}\cdot \overline{P_x}$.

Next suppose $x,y\in G\setminus H$. Then $x^H=xH$ and $y^H=yH$. As $G/H$ is abelian, we have 
\[\overline{x^H}\cdot \overline{y^H}=\overline{xH}\cdot \overline{yH}=|H|\overline{xyH}=|H|\overline{yxH}=\overline{yH}\cdot \overline{xH}=\overline{y^H}\cdot \overline{x^H}.\]

Suppose now that $x\in H$ and $y\in G\setminus H$. Let $x^H=P_x\subseteq H$ and $y^H=yH=P_y$. Then
\[\overline{P_x}\cdot y\overline{H}=\overline{P_x}\cdot\overline{H}y=|P_x|\overline{H}y=|P_x|y\overline{H}=|P_x|\overline{P_y}.\]
By similar reasoning, $\overline{yH}\cdot \overline{P_x}=|P_x|\overline{P_y}$.
Hence, $(G,H)$ is a SGP.
\end{proof}

\begin{Cor}\label{lem:sgpac}
     Let $G$ be a group. Suppose $H\normal G$, $T(H)$ is AC, and $G/H$ is abelian. If for all $x\in G\setminus H$ we have $x^H=xH$, then $(G,H)$ is a SGP and $T(G,\C[G]^H)$ is AC.
\end{Cor}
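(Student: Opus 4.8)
The plan is to get the SGP conclusion for free from Proposition \ref{thm:sgp} and to reduce the AC conclusion to the product condition of Theorem \ref{thm:sequiv}. The hypotheses assumed here---$H\normal G$, $G/H$ abelian, and $x^H=xH$ for all $x\in G\setminus H$---are exactly those of Proposition \ref{thm:sgp}, so $(G,H)$ is a SGP and $\C[G]^H$ is a commutative Schur ring with principal sets $\{g^H\}$. Thus it remains only to verify that $T(G,\C[G]^H)$ is AC, and by Theorem \ref{thm:sequiv} it suffices to show: for all $x,y\in G$ with $x^H\neq (y^{-1})^H$, the product $\overline{x^H}\cdot\overline{y^H}$ is supported on the single principal set $(xy)^H$, i.e. $x^H\cdot y^H=(xy)^H$ as principal sets.

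First I would split the verification into three cases according to whether $x$ and $y$ lie in $H$. When $x,y\in H$, the $H$-classes $x^H,y^H$ are precisely the conjugacy classes of $x,y$ in $H$ (using $H\normal G$, the $H$-class of an element of $H$ computed in $G$ coincides with its $H$-conjugacy class), so the principal sets involved are exactly those of $\mathcal{G}(H)=\C[H]^H$, and $(xy)^H$ is likewise the $H$-conjugacy class of $xy$. Since $T(H)$ is AC, Theorem \ref{thm:sequiv} applied to $\mathcal{G}(H)$ yields $x^H\cdot y^H=(xy)^H$ whenever $x^H\neq (y^{-1})^H$, settling this case.

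Next I would treat $x,y\in G\setminus H$. Here $x^H=xH$ and $y^H=yH$, and the crucial point is that, for such $x,y$, the hypothesis $x^H\neq (y^{-1})^H$ is equivalent to $xy\notin H$ (indeed $xH=y^{-1}H$ iff $yx\in H$ iff $xy\in H$, using normality of $H$); hence $(xy)^H=xyH$. The coset computation already carried out in Proposition \ref{thm:sgp} gives $\overline{xH}\cdot\overline{yH}=|H|\,\overline{xyH}$, so the product is supported on $(xy)^H$, as required. For the mixed case $x\in H$, $y\in G\setminus H$ (and symmetrically), the condition $x^H\neq (y^{-1})^H$ is automatic, since $x^H\subseteq H$ while $(y^{-1})^H=y^{-1}H\not\subseteq H$; moreover $xy\notin H$ forces $(xy)^H=xyH=yH$ (because $x^y\in H$), and the mixed identity $\overline{x^H}\cdot\overline{yH}=|x^H|\,\overline{yH}$ from Proposition \ref{thm:sgp} again shows the product is supported on $(xy)^H=y^H$. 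With all three cases established, Theorem \ref{thm:sequiv} gives that $T(G,\C[G]^H)$ is AC.

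I expect the main (and essentially only) subtlety to be the observation in the second case that $x^H\neq (y^{-1})^H$ is exactly what prevents the coset $xyH$ from collapsing into $H$: were $xy\in H$, the product $\overline{xH}\cdot\overline{yH}$ would be a scalar multiple of $\overline{H}$ rather than of a class sum genuinely inside $H$, and the condition $P_iP_j=P_h$ could fail. Everything else is bookkeeping: matching the notion of principal set across $\mathcal{G}(H)$ and $\C[G]^H$, and recalling that Theorem \ref{thm:sequiv} constrains only the \emph{support} of the product, so the scalar factors $|H|$ and $|x^H|$ are harmless.
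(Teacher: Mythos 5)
Your proposal is correct and follows essentially the same route as the paper: obtain the SGP from Proposition \ref{thm:sgp}, then verify the support condition of Theorem \ref{thm:sequiv} by the same three-case split ($x,y\in H$ via the AC hypothesis on $T(H)$; $x,y\in G\setminus H$ via the coset computation and the observation that $P_i\neq P_j^*$ forces $xy\notin H$; and the mixed case via $x^H\cdot yH=|x^H|\,yH$). The extra remarks you add (the equivalence $xH=y^{-1}H\iff xy\in H$ using normality, and that only the support matters) are accurate and just make explicit what the paper leaves implicit.
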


\begin{proof}
    By Proposition \ref{thm:sgp}, $(G,H)$ is a SGP, so $\C[G]^H$ is commutative. Let the principal classes be $P_0,P_1,\cdots, P_d$. 
    
    Now let $x,y\in G$ with $x\in P_i$, $y\in P_j$ and $xy\in P_k$ such that $P_i\neq P_j^*$. If $x,y\in H$ then $P_i=x^H$ and $P_j=y^H$. Since $T(H)$ is AC and $x^H=P_i\neq P_j^*=(y^{-1})^H$ we have by Theorem \ref{thm:sequiv} that $P_iP_j=x^Hy^H=(xy)^H=P_k$. 

    If $x,y\in G\setminus H$, then $P_i=xH$, $P_j=yH$, and $P_iP_j=xyH$. As $P_i\neq P_j^*$, we have $yH\neq x^{-1}H$, so $xy\not\in H$. Thus, $xyH=(xy)^H=P_k$ and $P_iP_j=P_k$.

    If $x\in H$ and $y\in G\setminus H$, then $P_i=x^H$ and $P_j=yH$. So we have $P_iP_j=x^H\cdot yH=yH$ since $x^H\subseteq H$. Notice that $xy\in Hy=yH$. Also $xy\not\in H$ as $y\not\in H$. So $P_k=(xy)^H=xyH=yH=P_iP_j$.

    From these three cases we see that $P_iP_j=P_k$. So by Theorem \ref{thm:sequiv}, $T(G,\C[G]^H)$ is AC. 
\end{proof}

At this point we consider which pairs $(G,H)$ satisfy our conditions that $T(H)$ is AC, $G/H$ is abelian, and $x^H=xH$ for all $x\in G\setminus H$. 

\begin{Prop}\label{prop:frobca}
    Let $G$ be nonabelian. Suppose $H\triangleleft G$ is a proper subgroup, that $G/H$ is abelian and for $x\in G\setminus H$ we have $x^H=xH$; then $G$ is a Frobenius group with cyclic complement, $H=G'$, and $H$ is the Frobenius kernel of $G$.
\end{Prop}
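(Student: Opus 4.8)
The plan is to extract two consequences from the hypothesis $x^H = xH$ (for $x \in G \setminus H$) and then recognize the Frobenius structure. First I would translate the hypothesis into statements about commutators and centralizers. Using the paper's conventions $x^k = k^{-1}xk$ and $[x,k]=x^{-1}k^{-1}xk$, one computes $x^k = x[x,k]$, so $x^H = x\{[x,k] : k \in H\}$, and hence $x^H = xH$ is equivalent to $\{[x,k] : k \in H\} = H$. Fixing any $x \in G \setminus H$ (which exists since $H$ is proper), this shows $H = \{[x,k] : k\in H\} \subseteq G'$; combined with $G' \le H$ (from $G/H$ abelian) we get $H = G'$. In particular $H \neq \{e\}$ because $G$ is nonabelian.

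Next I would establish the Frobenius centralizer condition. Since $x^H = xH$ gives $|x^H| = |H|$, and $|x^H| = [H : C_H(x)]$, we obtain $C_H(x) = \{e\}$ for every $x \in G \setminus H$. Read contrapositively: if $h \in H \setminus \{e\}$ and $g \in C_G(h)$, then $g \in H$ (otherwise $h \in C_H(g) = \{e\}$, a contradiction), so $C_G(h) \le H$. As $H \triangleleft G$ is a nontrivial proper subgroup with $C_G(h) \le H$ for all $h \in H \setminus \{e\}$, the definition of a Frobenius group shows that $G$ is Frobenius with Frobenius kernel $H = G'$.

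Finally I would identify the complement and show it is cyclic. By the structure theory of Frobenius groups the kernel $H$ admits a complement $K$ with $G = H \rtimes K$, so $K \cong G/H$ is abelian by hypothesis. To see $K$ is cyclic I would invoke the classical fact that the Sylow subgroups of a Frobenius complement are cyclic or generalized quaternion; since $K$ is abelian the generalized quaternion case is impossible, so every Sylow subgroup of $K$ is cyclic, and an abelian group all of whose Sylow subgroups are cyclic is itself cyclic. (Alternatively, the relation $G' = \{[x,g] : g \in G\}$ for $x \notin G' = H$ shows $G$ is Con-cos by Proposition \ref{prop:concos}, hence a nonabelian Camina group; Theorem \ref{thm:camclassify} then forces the complement of the Frobenius group $G$ to be cyclic or $Q_8$, and abelianness of $G/H$ excludes $Q_8$.)

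I expect the first two steps to be short and essentially computational, resting on the single identity $x^k = x[x,k]$ and the orbit--stabilizer count $|x^H| = [H:C_H(x)]$. The main point requiring care is the cyclicity of the complement: this is the only place where a nontrivial external structural result (on Frobenius complements, or the Dark--Scoppola classification) must be brought in, and I would take care to confirm that the complement supplied by Frobenius theory is genuinely isomorphic to $G/H$, so that the hypothesis that $G/H$ is abelian actually applies to it.
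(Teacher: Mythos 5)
Your proof is correct, but it takes a genuinely different route from the paper's. The paper first shows that $(G,H)$ is a Camina pair via Lemma \ref{lem:pair} (conjugate nontrivial cosets of $H$ coincide because $G/H$ is abelian), deduces $H=G'$ from Lemma \ref{lem:camsub}, concludes that $G$ is a Camina group, and then runs the Dark--Scoppola trichotomy of Theorem \ref{thm:camclassify}, eliminating the Camina $p$-group cases (nilpotency class $2$ and $3$ separately, via Proposition \ref{lem:pcamclass}) and the $Q_8$-complement case by a fairly long coset-size argument. You instead extract everything from the identity $x^k=x[x,k]$: the equivalence $x^H=xH\Leftrightarrow\{[x,k]:k\in H\}=H$ gives $H=G'$ in one line, and the orbit--stabilizer count $|x^H|=[H:C_H(x)]=|H|$ gives $C_H(x)=\{e\}$ for $x\notin H$, whence $C_G(h)\leq H$ for $h\in H\setminus\{e\}$ and $G$ is Frobenius with kernel $H$ directly from the definition --- this completely bypasses the need to exclude the Camina $p$-group cases, since a Frobenius group is never a $p$-group. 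What your approach buys is a much shorter and more transparent identification of the Frobenius structure; what it costs is at the final step, where cyclicity of the complement requires importing the classical fact that Sylow subgroups of Frobenius complements are cyclic or generalized quaternion (a result not in the paper's toolkit), together with the standard but nontrivial existence of a complement isomorphic to $G/H$ (normal Hall kernel plus Schur--Zassenhaus), which you rightly flag. Your parenthetical alternative --- verifying condition (4) of Proposition \ref{prop:concos} to see that $G$ is a nonabelian Con-cos, hence Camina, group and then applying Theorem \ref{thm:camclassify} with the $p$-group branch already excluded and $Q_8$ ruled out because the complement is $\cong G/H$ --- stays entirely within the paper's cited results and is arguably the cleanest hybrid of the two arguments.
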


\begin{proof}
As $G/H$ is abelian, if $xH$ and $yH$ are conjugate in $G/H$, then $xH=yH$. If additionally, $x,y\not\in H$ then $xH=x^H$ and $yH=y^H$, so $x^H=y^H$. Then by Lemma \ref{lem:pair}, $(G,H)$ is a Camina pair and by Lemma \ref{lem:camsub}, $Z(G)\leq H\leq G'$. However, as $G/H$ is abelian $G'\leq H$. Therefore, $H=G'$ and $G$ is a Camina group.

First assume that $G$ is a Camina $p-$group. Then by Theorem \ref{thm:camclassify}, the nilpotency class of $G$ is either $2$ or $3$. This gives us two cases.\\
{\bf Case 1: }Suppose the nilpotency class of $G$ is $2$. Then $Z(G)=G'=H$. Let $g\in G\setminus H$, so $g^H=\{g\}$. However, $\{g\}\neq gH$. This contradicts our assumption that for $x\in G\setminus H$ we have $x^H=xH$.\\
{\bf Case 2: }Suppose the nilpotency class of $G$ is $3$. Then $Z(G)\lneq G'$. Fix $g\in G\setminus H$, $h\in H=G'$. If $h\in Z(G)$, then $hgh^{-1}=g\in gZ(G)$. If $h\in H\setminus Z(G)$ then by Proposition \ref{lem:pcamclass}, $h^G=hZ(G)$. Therefore, $ghg^{-1}=hz$ for some $z\in Z(G)$. So $h^{-1}gh=zg=gz$ and $h^{-1}gh\in gZ(G)$. So for all $h\in H$, $h^{-1}gh\in gZ(G)$. Therefore, $g^H\subseteq gZ(G)$. However, $gZ(G)\subsetneq gH$. Thus, $g^H\neq gH$, a contradiction

Hence, $G$ cannot be a Camina $p-$group. By Theorem \ref{thm:camclassify}, $G$ is either $(i)$ a Frobenius group with cyclic complement or $(ii)$ a Frobenius group with complement $Q_8$. Suppose we have $(ii)$ with complement $K\cong Q_8$. As $Q_8$ is solvable, so is $G$, and $G\neq G'$. By Proposition \ref{prop:cammod}, either $N\leq G'$ or $G'\leq N$. If $G'\leq N$, then $G/N$ is abelian. However, $G/N\cong Q_8$ is nonabelian, so $N<G'$. 

Fix $x\in G\setminus G'$. By assumption $x^{G'}=xG'$. As $x\not\in N$, by Proposition \ref{prop:frobeq}, there exists some $h\in K$ with $x=ghg^{-1}$. Now as $G=NK$, $g=nk$ for some $n\in N$, $k\in K$. So $x=nkhk^{-1}n^{-1}$. Note that $h_1=khk^{-1}\in K$. Then $x=nh_1n^{-1}$, and $x^{G'}=h_1^{G'}$ as $n\in N\subseteq G'$.

Now let $y\in h_1^{G'}$. Then there is $s\in G'$ such that $y=sh_1s^{-1}$. We know $N< G'\normal G$ and $N\normal G$. Then $G'/N\normal G/N\cong Q_8$ and $|G\colon G'|=|G/N\colon G'/N|$. Since $G'$ is a proper subgroup of $G$, we have $|G\colon G'|>1$. Then $|G/N\colon G'/N|>1$. Hence, $G'/N$ is a proper subgroup of $G/N$. As every proper subgroup of $Q_8$ is abelian, we know $G'/N$ is abelian. Then $(sN)(h_1N)(sN)^{-1}=sh_1s^{-1}N=h_1N$. Thus, $y=sh_1s^{-1}\in h_1N$. So $h_1^{G'}\subseteq h_1N$. Thus, $x^{G'}\subseteq h_1N$ as $x^{G'}=h_1^{G'}$. However, by assumption, $x^{G'}=xG'$, so $xG'\subseteq h_1N$ and $|xG'|\leq |h_1N|$. However, $|xG'|=|G'|$, $|h_1N|=|N|$, and $|N|<|G'|$ since $N$ is a proper subgroup of $G'$. So $|xG'|>|h_1N|$ which contradicts $|xG'|\leq |h_1N|$.

Hence, $G$ is a Frobenius group with cyclic complement $K$ and Frobenius kernel $N$. By Proposition \ref{prop:frobeq}, for all $n\in N$ there is a $g\in G$ such that $ghg^{-1}=nh$ for some $h\in K$. This implies $n=[g^{-1},h^{-1}]\in G'$, so $N\subseteq G'$. However, $G/N\cong K$ and $K$ is abelian. Therefore, $G'\subseteq N$. Hence, $G'=N=H$ is the Frobenius kernel of $G$.
\end{proof}

\begin{Prop}\label{prop:frobcon}
    Let $G$ be a Frobenius group with abelian complement $H$ and Frobenius kernel $N$. Then $N=G'$ and for all $x\in G\setminus N$ we have $x^N=xN$. 
\end{Prop}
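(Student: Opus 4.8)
The plan is to prove the two assertions separately, first establishing $N=G'$ and then the class equality $x^N=xN$. For $N=G'$, one inclusion is immediate: since $G/N\cong H$ is abelian, the commutator subgroup satisfies $G'\leq N$. For the reverse inclusion I would invoke the Frobenius fusion property Proposition~\ref{prop:frobeq}(5). Fix any $h\in H\setminus\{e\}$; then $h$ is conjugate to every element of $hN$, so for each $n\in N$ there is $g\in G$ with $ghg^{-1}=hn$. Rearranging gives $n=h^{-1}ghg^{-1}=[h,g^{-1}]\in G'$ (in the paper's convention $[g,h]=g^{-1}h^{-1}gh$). As $n\in N$ was arbitrary, $N\subseteq G'$, and hence $N=G'$. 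This is essentially the fusion computation already used at the end of the proof of Proposition~\ref{prop:frobca}.

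For the second claim I would first dispatch the easy inclusion $x^N\subseteq xN$: since $N\normal G$, for any $n\in N$ we have $n^{-1}xn\equiv x\pmod N$, so $n^{-1}xn\in xN$. The substance is the reverse inclusion, which I would obtain by a counting argument rather than by constructing conjugators directly. The key step is to show that $C_N(x)=\{e\}$ for every $x\in G\setminus N$. By Proposition~\ref{prop:frobeq}(4), $x$ is conjugate in $G$ to some $h\in H$, and $h\neq e$ because $x\neq e$; write $x=ghg^{-1}$. Then $C_G(x)=gC_G(h)g^{-1}$, and Proposition~\ref{prop:frobeq}(3) gives $C_G(h)\leq H$, so $C_G(x)\leq gHg^{-1}$. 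Intersecting with $N$ and using $N\normal G$ (so that $gNg^{-1}=N$) yields
\[
C_N(x)=N\cap C_G(x)\leq N\cap gHg^{-1}=g(N\cap H)g^{-1}=\{e\}.
\]

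With $C_N(x)$ trivial in hand, the orbit--stabilizer theorem applied to the conjugation action of $N$ on $x$ gives $|x^N|=[N:C_N(x)]=|N|=|xN|$. Combined with the inclusion $x^N\subseteq xN$, this forces $x^N=xN$, completing the proof. The main obstacle is the centralizer computation $C_N(x)=\{e\}$: everything hinges on first locating $x$ inside a conjugate of the complement via Proposition~\ref{prop:frobeq}(4), and then controlling $N\cap gHg^{-1}$ using normality of $N$; once this is established the remainder is bookkeeping. It is worth noting that the hypothesis that $H$ is abelian is used only for $N=G'$, whereas the class equality $x^N=xN$ in fact holds for any Frobenius group.
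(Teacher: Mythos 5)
Your proof is correct. The first half ($N=G'$) is essentially identical to the paper's: both directions use $G/N\cong H$ abelian for $G'\leq N$ and the fusion property of Proposition~\ref{prop:frobeq}(5) to write each $n\in N$ as a commutator. For the second half, however, you take a genuinely different route. The paper proves $x^N=xN$ constructively: it conjugates $x$ into the complement, uses the abelianness of $H$ to replace a $G$-conjugator $g=nk$ by its $N$-part $n$ (since $khk^{-1}=h$), and then invokes Proposition~\ref{prop:frobeq}(5) again to show $hN\subseteq h^N$. You instead prove the trivial inclusion $x^N\subseteq xN$ and then force equality by counting: $C_N(x)=N\cap C_G(x)\leq N\cap gHg^{-1}=g(N\cap H)g^{-1}=\{e\}$ via Proposition~\ref{prop:frobeq}(3) and (4) together with normality of $N$, whence $|x^N|=[N:C_N(x)]=|N|=|xN|$. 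This is shorter, avoids any explicit construction of conjugators, and, as you correctly observe, shows that $x^N=xN$ holds for an arbitrary Frobenius group --- the abelian hypothesis on $H$ is only needed for $N=G'$. The paper's argument, by contrast, leans on the abelianness of $H$ in both steps of the class computation, but has the mild virtue of exhibiting the conjugating elements explicitly (a flavor that recurs in Lemmas~\ref{lem:sgpouter}--\ref{lem:sgpclass'}). Either proof is complete; yours isolates the true hypotheses more cleanly.
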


\begin{proof}
    As $G/N\cong H$ is abelian, we have $G'\leq N$. Let $n\in N$. By Proposition \ref{prop:frobeq}, there is $g\in G$ such that $ghg^{-1}=nh$, $h\in H$. Thus $n=ghg^{-1}h^{-1}=[g^{-1},h^{-1}]\in G'$, and so $N=G'$. 
    
    Now let $x\in G\setminus N$. Then by Proposition \ref{prop:frobeq}, there is $h\in H$ such that $x$ is conjugate to $h$. Suppose $g\in G$ such that $x=ghg^{-1}$. As $G=NH$, $g=nk$ for some $n\in N$ and $k\in H$. So $x=ghg^{-1}=nkhk^{-1}n^{-1}$. However, as $H$ is abelian, $khk^{-1}=h$. Thus, $x=nhn^{-1}$. So $x\in h^N$. As the classes of conjugation by elements of $N$ partition $G$, this implies $x^N=h^N$. As $h\in H\setminus \{e\}$ we have by Proposition \ref{prop:frobeq}, that $h$ is conjugate to every element of $hN$. Let $y\in hN$. Then there exists $r\in G$ such that $y=rhr^{-1}$. As $G=NH$, $r=ms$ for some $m\in N$ and $s\in H$. So $y=rhr^{-1}=mshs^{-1}m^{-1}$. However, as $H$ is abelian, $shs^{-1}=h$. Thus, $y=mhm^{-1}\in h^N$ and $hN\subseteq h^N$. Given $z\in h^N$, we have $z=tht^{-1}$ for some $t\in N$. Then $z\in NhN=hN$. Thus, $z\in hN$ and $h^N\subseteq hN$. Hence, $h^N=hN$. Recall that $x^N=h^N$, so $x^N=hN$. As $x\in hN$, $hN=xN$. Therefore, $x^N=xN$.   
\end{proof}

\begin{Cor}\label{cor:frobsgp}
    Let $G$ be a Frobenius group with abelian complement and Frobenius kernel $N$. Then $(G,N)$ is a SGP.
\end{Cor}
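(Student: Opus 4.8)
The plan is to obtain this as an immediate consequence of Proposition \ref{thm:sgp}, applied with the subgroup taken to be the Frobenius kernel $N$. To invoke that proposition I must check its three hypotheses: that $N\normal G$, that $G/N$ is abelian, and that $x^N=xN$ for every $x\in G\setminus N$. The strategy is simply to verify each of these in turn and then cite Proposition \ref{thm:sgp}.

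First I would record that the Frobenius kernel $N$ is by definition a normal subgroup of $G$, so $N\normal G$ holds with no work. For the second hypothesis, since $G=NH$ with $N\cap H=\{e\}$ (this is the defining decomposition of a Frobenius group with complement $H$), the natural projection induces an isomorphism $G/N\cong H$; as $H$ is abelian by assumption, $G/N$ is abelian. For the third and only substantive hypothesis, the equality $x^N=xN$ for all $x\in G\setminus N$ is exactly the conclusion of Proposition \ref{prop:frobcon}, which has just been proved. Thus all three hypotheses of Proposition \ref{thm:sgp} are met.

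Applying Proposition \ref{thm:sgp} then yields that $(G,N)$ is a SGP, which is the desired statement. I do not expect any genuine obstacle here: the real content — that conjugation by elements of $N$ moves each non-kernel element $x$ across its entire coset $xN$, so that the $N$-class $x^N$ coincides with $xN$ — has already been extracted in Proposition \ref{prop:frobcon}, using the Frobenius characterizations of Proposition \ref{prop:frobeq} together with commutativity of the complement. The present corollary is therefore a short assembly step, packaging normality of $N$, the abelian quotient $G/N\cong H$, and the coset–class identity into the general sufficient condition for an $H$-class construction to be a strong Gelfand pair.
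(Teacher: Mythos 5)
Your proposal is correct and matches the paper's proof: both deduce the hypotheses of Proposition \ref{thm:sgp} from Proposition \ref{prop:frobcon} and then apply it. The only cosmetic difference is that the paper infers that $G/N$ is abelian from the identity $N=G'$ supplied by Proposition \ref{prop:frobcon}, whereas you use the isomorphism $G/N\cong H$ with $H$ abelian; both are immediate.
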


\begin{proof}
    By Proposition \ref{prop:frobcon}, $N=G'$ and for $x\in G\setminus N$, $x^N=xN$. Then by Proposition \ref{thm:sgp}, $(G,N)$ is a SGP.
\end{proof}

We can now classify all SGPs that result in an AC Terwilliger algebra:

\begin{Thm}\label{thm:sgpclassify}
    Let $H\lneq G$. Then $(G,H)$ is a SGP and $T(G,\C[G]^H)$ is AC if and only if 
    \begin{enumerate}
        \item $G$ is abelian with $H$ being any subgroup of $G$, or
        \item $G$ is a Frobenius group with Frobenius kernel $H=G'$ and cyclic complement such that $H$ is either abelian or a Camina $p-$group.
    \end{enumerate}
     
\end{Thm}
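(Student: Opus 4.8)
The plan is to prove the two directions separately, in each case splitting on whether $G$ is abelian so that the nonabelian machinery of Theorems \ref{thm:sgpequiv1} and \ref{thm:sgpequiv} and its converse in Corollary \ref{lem:sgpac} can be brought to bear. The overall strategy is simply to assemble the structural results already established: the forward implication extracts the four conditions ``$H\normal G$, $T(H)$ AC, $G/H$ abelian, and $x^H=xH$ for $x\in G\setminus H$'' and then identifies the group, while the converse verifies that these same four conditions hold for the groups listed in (1) and (2) and feeds them into Corollary \ref{lem:sgpac}.

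For the forward direction, assume $(G,H)$ is a SGP with $T(G,\C[G]^H)$ AC. If $G$ is abelian we are immediately in case (1), so suppose $G$ is nonabelian. Theorem \ref{thm:sgpequiv1} gives $H\normal G$, $T(H)$ AC, and $G/H$ abelian, and Theorem \ref{thm:sgpequiv} gives $x^H=xH$ for all $x\in G\setminus H$. Since $H$ is a proper normal subgroup with $G/H$ abelian and $x^H=xH$, Proposition \ref{prop:frobca} forces $G$ to be a Frobenius group with cyclic complement whose Frobenius kernel is $H=G'$. By Thompson's theorem (Theorem \ref{thm:frobnil}) the kernel $H$ is nilpotent; combined with the fact that $T(H)$ is AC, the classification in Theorem \ref{thm:acclassify} leaves only the nilpotent entries on that list, namely abelian groups and nonabelian Camina $p$-groups (the remaining two families there are nontrivial Frobenius groups, hence non-nilpotent). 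This places us in case (2).

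For the converse, in case (1) (with $G$ abelian and $H$ any proper subgroup) conjugation is trivial, so every $H$-class is a singleton and $\C[G]^H=\C[G]$ is commutative; thus $(G,H)$ is a SGP, and since all principal sets are singletons the product condition of Theorem \ref{thm:sequiv} holds trivially, so $T(G,\C[G]^H)$ is AC. In case (2), the complement is cyclic, hence abelian, so Corollary \ref{cor:frobsgp} makes $(G,H)$ a SGP and Proposition \ref{prop:frobcon} yields $H=G'$ together with $x^H=xH$ for $x\in G\setminus H$. Since $H$ is either abelian or a Camina $p$-group, $T(H)$ is AC by Theorem \ref{thm:acclassify}, and $G/H$ is abelian (being cyclic); Corollary \ref{lem:sgpac} then delivers that $T(G,\C[G]^H)$ is AC.

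I expect the only genuine friction to be bookkeeping rather than mathematics, since the substantive work lives in the earlier results. Two points need care. First, the roles of \emph{kernel} and \emph{complement} must be tracked: Proposition \ref{prop:frobcon} and Corollary \ref{cor:frobsgp} are phrased with the letter $H$ denoting the (abelian) complement, whereas here $H$ is the kernel; one must observe that their hypothesis of an abelian complement is met by our cyclic complement, and that their conclusion---that conjugation by the kernel sends each outside element to its full coset---is precisely the input $x^H=xH$ needed for Corollary \ref{lem:sgpac}. Second, the abelian case of the converse cannot be routed through Corollary \ref{lem:sgpac} (which would demand $x^H=xH$, false once $|H|>1$ in an abelian group), so it must be handled directly via singleton classes and Theorem \ref{thm:sequiv}. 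The most delicate conceptual step is confirming that nilpotency of the Frobenius kernel, forced by Thompson's theorem, prunes the list in Theorem \ref{thm:acclassify} down to exactly the abelian and Camina $p$-group cases.
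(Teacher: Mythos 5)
Your proposal is correct and follows essentially the same route as the paper's proof: the forward direction assembles Theorems \ref{thm:sgpequiv1} and \ref{thm:sgpequiv}, Proposition \ref{prop:frobca}, Thompson's theorem, and the classification in Theorem \ref{thm:acclassify}, and the converse feeds Proposition \ref{prop:frobcon} and Theorem \ref{thm:acclassify} into Corollary \ref{lem:sgpac}. The only cosmetic difference is that you verify the abelian case of the converse directly via singleton classes and Theorem \ref{thm:sequiv}, whereas the paper cites Theorem \ref{thm:acclassify}; both are valid.
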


\begin{proof}
    First suppose $(G,H)$ is a SGP, $T(G,\C[G]^H)$ is AC. If $G$ is abelian, then we are done. Suppose $G$ is nonabelian. Then by Theorems \ref{thm:sgpequiv1} and \ref{thm:sgpequiv}, $T(H)$ is AC, for all $x\in G\setminus H$ we have $x^H=xH$, and $G/H$ is abelian. As $G$ is nonabelian, Proposition \ref{prop:frobca} gives that $G$ is a Frobenius group with cyclic complement and $H=G'$ is the Frobenius kernel. Since $T(H)$ is AC, by Theorem \ref{thm:acclassify} $H$ is either abelian, a Camina $p-$group, a Frobenius group of the form $\Z_p^r\rtimes \Z_{p^r-1}$, or $\Z_3^2\rtimes Q_8$. However, by Theorem \ref{thm:frobnil}, $H$ must be a nilpotent group since it is the Frobenius kernel. Since Frobenius groups are not nilpotent either $H$ is abelian or a Camina $p-$group.
    
    Therefore, $G$ is a Frobenius group with Frobenius kernel $H=G'$ and cyclic complement such that $H$ is either abelian or a Camina $p-$group. This proves the forward direction.

    Now we prove the reverse direction. If $G$ is abelian and $H\leq G$, then $\C[G]^H=\C[G]$, which is commutative. Thus, $(G,H)$ is a SGP and by Theorem \ref{thm:acclassify}, $T(G,\C[G]^H)$ is AC. 

    If $G$ is a Frobenius group with Frobenius kernel $H$ and cyclic complement such that $H$ is abelian or a Camina $p-$group, then by Theorem \ref{thm:acclassify}, $T(H)$ is AC. By Proposition \ref{prop:frobcon}, $H=G'$ is the Frobenius kernel of $G$ and for all $x\in G\setminus H$ we have $x^H=xH$. Then by Corollary \ref{lem:sgpac}, $(G,H)$ is a SGP and $T(G,\C[G]^H)$ is AC.
\end{proof}

We now give two examples of infinite families of Frobenius groups satisfying Theorem \ref{thm:sgpclassify}.

\begin{Examp}
    \normalfont{Let $H$ be the extraspecial group of exponent $p>2$ and order $p^3$:
    \[H=\langle a,b,c\colon a^p=b^p=c^p=1,c=a^{-1}b^{-1}ab,ac=ca,bc=cb\rangle.\]
    Let $k\in \Z$, $k>2$, where $k\mid (p-1)$ is prime and $k^2\not\equiv 1 \mod p$. Then $\varphi\colon H\rightarrow H$ given by $\varphi(a)=a^k,\ \varphi(b)=b^k,\ \varphi(c)=c^{k^2}$,
    determines an automorphism that only fixes $1$. Therefore, if $d$ is the order of $k$ in $\Z_p^\times$, then $G=H\rtimes_\varphi \Z_d$ is a Frobenius group with extraspecial kernel and cyclic complement, so that $T(G,\C[G]^H)$ is AC.} \hfill$\square$
\end{Examp}

\begin{Examp}
\normalfont{
    Let $p>2$ be prime. Let $H=\langle h_1,\cdots, h_6\rangle$ where
    \[h_2^{h_1}=h_2h_4,\ h_3^{h_1}=h_3h_5,\ h_3^{h_2}=h_3h_6\]
    with $h_i^p=1$, $1\leq i\leq 6$, and $Z(H)=\langle h_4,h_5,h_6\rangle$. Then $H$ is a Camina group of order $p^6$ with nilpotency class $2$. Let $X=(x_{ij})\in \SL(3,p)$ have prime order not equal to $p$ or $2$. We want $\varphi\in \Aut(H)$ such that
    \[g_4: =\varphi(h_4)=h_4^{x_{11}}h_5^{x_{21}}h_6^{x_{31}},\ g_5: =\varphi(h_5)=h_4^{x_{12}}h_5^{x_{22}}h_6^{x_{32}},\ g_6:=\varphi(h_6)=h_4^{x_{13}}h_5^{x_{23}}h_6^{x_{33}},\]
    and $a_i,b_i,c_i\in \Z_p$ with
    \begin{equation}\label{eq:x'}
      g_1=h_1^{a_1}h_2^{a_2}h_3^{a_3},\ g_2=h_1^{b_1}h_2^{b_2}h_3^{b_3},\ g_3=h_1^{c_1}h_2^{c_2}h_3^{c_3},  
    \end{equation}
    of order $p$ such that $g_2^{g_1}=g_2g_4$, $g_3^{g_1}=g_3g_5$, $g_3^{g_2}=g_3g_6$. Note that if $h_j^{h_i}=h_jh_k$, then $h_i^{h_j}=h_ih_k^{-1}$. Now,
\begin{align*}
    g_2^{g_1}&=(h^{b_1}h_2^{b_2}h_3^{b_3})^{h_1^{a_1}h_2^{a_2}h_3^{a_3}}=(h_1^{b_1})^{h_2^{a_2}h_3^{a_3}}(h_2^{b_2})^{h_1^{a_1}h_2^{a_2}h_3^{a_3}}(h_3^{b_3})^{h_1^{a_1}h_2^{a_2}h_3^{a_3}}\\
    &=h_1^{b_1}h_4^{a_1b_2-a_2b_1}h_5^{a_1b_3-a_3b_1}h_6^{a_2b_3-a_3b_2}\cdots=g_2g_4\cdots =g_2h_4^{x_{11}}h_5^{x_{21}}h_6^{x_{31}}\cdots.
\end{align*}
    Similar arguments yield
    \[g_3^{g_1}=h_1^{c_1}h_4^{a_1c_2-c_2b_1}h_5^{a_1c_3-a_3c_1}h_6^{a_2c_3-a_3c_2}\cdots=g_3g_5\cdots =g_3h_4^{x_{12}}h_5^{x_{22}}h_6^{x_{32}}\cdots,\]
     \[g_3^{g_2}=h_1^{c_1}h_4^{b_1c_2-b_2c_1}h_5^{b_1c_3-b_3c_1}h_6^{b_2c_3-b_3c_2}\cdots=g_3g_6\cdots =g_3h_4^{x_{13}}h_5^{x_{23}}h_6^{x_{33}}\cdots.\]
     So we need to solve (over $\Z_p$)
     \begin{equation}\label{eq:x}
        \begin{bmatrix}
            a_1b_2-a_2b_1 & a_1b_3-a_3b_1 & a_2b_3-a_3b_2 \\
            a_1c_2-a_2c_1 & a_1c_3-a_3c_1 & a_2c_3-a_3c_2 \\
            b_1c_2-b_2c_1 & b_1c_3-c_1b_3 & b_2c_3-b_3c_2
        \end{bmatrix}=\begin{bmatrix}
            x_{11} & x_{21} & x_{31}
\\
x_{12} & x_{22} & x_{32}\\
x_{13} & x_{23} & x_{33}
\end{bmatrix}.
     \end{equation}
     Let $A=(a_1,a_2,a_3)^T, B=(b_1,b_2,b_3)^T$, $C=(c_1,c_2,c_3)^T$. Let $\times$ denote the standard cross product on $\mathbb{F}_p^3$. We have
     \[\begin{array}{c}
         A\times B=(a_2b_3-a_3b_2,a_3b_1-a_1b_3,a_1b_2-a_2b_1)^T=(x_{31},-x_{21},x_{11})^T,\ \ \\
        B\times C=(b_2c_3-b_3c_2,b_3c_1-b_1c_3,b_2c_2-b_2c_1)^T=(x_{33},-x_{23},x_{13})^T,\ \ \ \ \\
        C\times A=(c_2a_3-c_3a_2,c_3a_1-c_1a_3,c_1a_2-c_2a_1)^T=(-x_{32},x_{22},-x_{12})^T.
     \end{array}\]
     
     Standard identities are 
     \[A\cdot (B\times C)=B\cdot (C\times A)=C\cdot (A\times B),\]
         \[A\cdot (A\times B)=B\cdot (A\times B)=A\cdot (A\times C)=C\cdot (A\times C)=B\cdot (B\times C)=C\cdot (B\times C)=0.\]
     These gives
     \begin{equation}\label{eq:x3}
     \begin{split}
     & x_{33}a_1-x_{23}a_2+x_{13}a_3=-x_{32}b_1+x_{22}b_2-x_{12}b_3=c_{31}c_1-x_{21}c_2+x_{11}c_3,\\
         & x_{31}a_1-x_{21}a_2+x_{11}a_3=0,\ x_{31}b_1-x_{21}b_2+x_{11}b_3=0,\ 
         x_{32}a_1-x_{22}a_2+x_{12}a_3=0,\\
         & x_{32}c_1-x_{22}c_2+x_{12}c_3=0,\ x_{33}b_1-x_{23}b_2+x_{13}b_3=0,\ x_{33}c_1-x_{23}c_2+x_{13}c_3=0.\ \
              \end{split}
     \end{equation}
     Solving the system of eight linear equations in (\ref{eq:x3}) gives 
     \begin{equation}\label{eq:x4}
         \begin{split}
             a_1=(x_{11}x_{22}-x_{12}x_{21})/(x_{22}x_{33}-x_{23}x_{32})c_3,\\
             a_2=(x_{11}x_{32}-x_{12}x_{31})/(x_{22}x_{33}-x_{23}x_{32})c_3,\\
             a_3=(x_{21}x_{32}-x_{22}x_{31})/(x_{22}x_{33}-x_{23}x_{32})c_3,\\
             b_1=(x_{11}x_{23}-x_{13}x_{21})/(x_{22}x_{33}-x_{23}x_{32})c_3,\\
             b_2=(x_{11}x_{33}-x_{13}x_{31})/(x_{22}x_{33}-x_{23}x_{32})c_3,\\
             b_3=(x_{21}x_{33}-x_{23}x_{31})/(x_{22}x_{33}-x_{23}x_{32})c_3,\\
             c_1=(x_{12}x_{23}-x_{13}x_{22})/(x_{22}x_{33}-x_{23}x_{32})c_3,\\
             c_2=(x_{12}x_{33}-x_{13}x_{32})/(x_{22}x_{33}-x_{23}x_{32})c_3.
         \end{split}
     \end{equation}
Here we are assuming that $x_{22}x_{33}-x_{23}x_{32}\neq 0$. Substituting these values into $a_1b_2-a_2b_1=x_{11}$ gives
\[c_3^2+\frac{(-x_{22}^2x_{33}^2+2x_{22}x_{23}x_{32}x_{33}-x_{23}^2x_{32}^2)}{(x_{11}x_{22}x_{33}-x_{11}x_{23}x_{32}-x_{12}x_{21}x_{33}+x_{12}x_{23}x_{31}+x_{13}x_{21}x_{32}-x_{13}x_{22}x_{31}}=0,\]
which gives $c_3^2=\frac{(x_{22}x_{33}-x_{23}x_{32})^2}{\det X}=(x_{22}x_{33}-x_{23}x_{32})^2$
     as $X\in \SL(3,p)$. Taking $c_3=x_{22}x_{33}-x_{23}x_{32}$ we then get
     \[Y=\begin{bmatrix}
         a_1 & a_2 & a_3 \\
         b_1 & b_2 & b_3 \\
         c_1 & c_2 & c_3
     \end{bmatrix}=\begin{bmatrix}
         x_{11}x_{22}-x_{12}x_{21} & x_{11}x_{32}-x_{12}x_{31} & x_{21}x_{32}-x_{22}x_{31} \\
         x_{11}x_{23}-x_{13}x_{21} & x_{11}x_{33}-x_{13}x_{31} & x_{21}x_{33}-x_{23}x_{31} \\
         x_{12}x_{23}-x_{13}x_{22} & x_{12}x_{33}-x_{13}x_{32} & x_{22}x_{33}-x_{23}x_{32}
     \end{bmatrix}.\]
     Hence, picking $a_i,b_i,c_i$ as given in $Y$ we get $\varphi\in \Aut(H)$ determined by the $g_i$ in (\ref{eq:x'}) that gives a Frobenius group $H\rtimes_{\varphi}\Z_{|\varphi|}$ with a Camina group kernel of order $p^6$ and nilpotency class $2$. Thus, $T(G,\C[G]^H)$ is AC.}
\end{Examp}

\begin{Examp}
    An example with a Frobenius kernel of nilpotency class $3$ can be constructed as follows. Let $H=G_{(11^7,750208)}$ (using the Magma index). This group is generated by $a,b,c,d,e,f,g$ of order $p$ with relations 
    \[b^a=be,\ c^a=cf,\ c^b=cg,\ d^b=dfg^{10},\ d^c=de^2g^{10},\ e^a=eg,\ f^d=fg^9.\] 
    Also $g$ is central and all other commutators of generators are trivial. This is a Camina $11-$group with nilpotency class $3$. Define the automorphism $\varphi\colon H\rightarrow H$ by 
    \[\varphi(a)=a^4e^7f^7g^7,\ \varphi(b)=b^5e^6fg^8,\ \varphi(c)=c^5e^2f^6g,\]
    \[\varphi(d)=d^4e^8g^4,\ \varphi(e)=e^9g^{10},\ \varphi(f)=f^9g^8,\ \varphi(g)=g^3.\]
    This automorphism can be shown to have no nontrivial fixed points and so gives a Frobenius action. It has order $5$ giving a Frobenius group $H\rtimes_{\varphi} \Z_5$ with a Frobenius kernel that is a Camina $p-$group with nilpotency class $3$ and cyclic complement. Thus, $T(G,\C[G]^H)$ is AC.

    Using Magma one finds that $G_{(11^7,750208)}$ is the only group of order $11^7$ with an automorphism that is fixed point free. So, the group $G$ constructed above is the only Frobenius group with Frobenius kernel of order $11^7$.
\end{Examp}

We conclude this section by finding $\dim T(G,\C[G]^H)$ in the nonabelian case of Theorem \ref{thm:sgpclassify}.

\begin{Prop}\label{prop:sgpdim}
    Let $G$ be a Frobenius group with Frobenius kernel $H$ and cyclic complement of order $k$. Assume that $T(H)$ is AC and $H$ has $m$ conjugacy classes. Then 
    \[\dim T(G,\C[G]^H)=\dim T(H)+(k-1)(k-2+3m).\]
\end{Prop}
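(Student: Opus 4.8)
The plan is to exploit the explicit coset structure of the $H$-classes and to reduce $\dim T(G,\C[G]^H)$ to a sum over ``coset blocks''. First I would record the principal sets of $\C[G]^H$. Since $G$ is Frobenius with kernel $H$ and cyclic complement of order $k$, Proposition \ref{prop:frobcon} gives $H=G'\normal G$, $G/H\cong \Z_k$, and $x^H=xH$ for every $x\in G\setminus H$. Hence the principal sets are the $m$ conjugacy classes of $H$ (all contained in $H$) together with the $k-1$ nontrivial cosets $g_1H,\dots,g_{k-1}H$, for a total of $m+k-1$ classes. Indexing rows and columns of our matrices by the cosets $H=g_0H,g_1H,\dots,g_{k-1}H$, each adjacency matrix and each dual idempotent $E_i^*$ becomes a block matrix with very rigid $|H|\times|H|$ blocks: an inner adjacency matrix $A_C$ (for a conjugacy class $C$ of $H$) is block-diagonal carrying the scheme matrix $A_C^{(H)}$ of $\mathcal G(H)$ on each diagonal block; an outer adjacency matrix $A_{g_\ell H}$ has an all-ones block $J$ in coset-position $(s,s+\ell)$ for each $s$ and zeros elsewhere; the inner dual idempotents live only in coset block $(0,0)$; and the outer dual idempotent of $g_sH$ is the projection onto block $(s,s)$. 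This recovers the wreath structure $\mathcal G(H)\wr\mathcal G(\Z_k)$ of Corollary \ref{cor:sgpwreath}.

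Because every coset projection lies in $T=T(G,\C[G]^H)$ --- for $s\neq 0$ it is an outer dual idempotent, while the projection onto block $(0,0)$ is the sum of the inner dual idempotents --- the algebra decomposes as a direct sum of its coset blocks, and I would compute $\dim T=\sum_{s,t}\dim T_{s,t}$, where $T_{s,t}\subseteq M_{|H|}(\C)$ is the space of $H\times H$ matrices occurring in coset block $(s,t)$ of elements of $T$. I will also use throughout that $T(H)$ is invariant under the simultaneous conjugation action of $H$ on rows and columns (since $A_C^{(H)}$ and $E_C^{*}$ are), so that the row-sum vector $r(X)$ of any $X\in T(H)$ is constant on conjugacy classes of $H$; the space $R$ of such class functions has dimension $m$.

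The four block types are then handled as follows. For the diagonal block $(0,0)$, only inner generators and inner idempotents act within coset $0$, and any excursion to another coset and back contributes, via $J X J=(\text{sum of }X)\,J$ and $A_C^{(H)}J=J A_C^{(H)}=|C|J$, only multiples of matrices already in $T(H)$ (note $J=\sum_C A_C^{(H)}$ lies in the Bose--Mesner algebra of $H$); hence $T_{0,0}=T(H)$. For a diagonal block $(s,s)$ with $s\neq 0$ there are no inner idempotents to refine the structure, so only the Bose--Mesner algebra $M_H=\langle A_C^{(H)}\rangle$ survives (excursions again collapse to multiples of $J\in M_H$), giving $\dim T_{s,s}=m$. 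For an off-diagonal block $(s,t)$ with $s,t\neq 0$ and $s\neq t$, every contributing word must cross cosets and so contains a factor $J$; with no inner idempotents available off coset $0$, everything collapses to a scalar multiple of $J$, whence $\dim T_{s,t}=1$. The delicate case is the mixed block $(0,t)$: here I would track the leftmost and rightmost outer generators of a word. Everything left of the leftmost outer generator is a genuine $T(H)$-element $X$ on coset $0$, and $XJ=r(X)\mathbf 1^\top$ with $r(X)\in R$; everything right of the rightmost outer generator lies in $M_H$ (inner idempotents are unavailable on coset $t\neq 0$), forcing the right factor to be a scalar multiple of $\mathbf 1^\top$. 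Combining, each word contributes $r(X)\mathbf 1^\top$ with $r(X)$ a class function, so $T_{0,t}\subseteq R\cdot\mathbf 1^\top$; conversely $E_C^* J=\mathbf 1_C\mathbf 1^\top$ realizes all of $R\mathbf 1^\top$, giving $\dim T_{0,t}=m$. By $*$-closure of $T$ one gets $\dim T_{t,0}=\dim T_{0,t}=m$ as well.

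Finally I would sum: the single block $(0,0)$ contributes $\dim T(H)$; the $k-1$ blocks $(s,s)$ and the $2(k-1)$ mixed blocks $(0,t),(t,0)$ each contribute $m$; and the remaining $(k-1)(k-2)$ off-diagonal blocks each contribute $1$. This totals $\dim T(H)+(k-1)m+2(k-1)m+(k-1)(k-2)=\dim T(H)+(k-1)(k-2+3m)$, as claimed. I expect the main obstacle to be the rigorous ``absorption'' bookkeeping --- showing that crossing-induced $J$-factors collapse all excursions, and in particular that the mixed blocks contribute exactly $m$ and not more --- which I would control via the leftmost/rightmost-generator reduction above together with the identities $JXJ=(\text{sum of }X)\,J$ and $A_C^{(H)}J=JA_C^{(H)}=|C|J$.
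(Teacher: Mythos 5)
Your proposal is correct, but it takes a genuinely different route from the paper's. The paper first invokes Theorem \ref{thm:tanaka}: since $T(G,\C[G]^H)$ is AC it is triply regular, so $T=T_0$ and $\dim T$ equals the number of triples $(i,j,k)$ with $p_{ij}^k\neq 0$; it then counts these triples in four cases according to whether $P_i,P_j$ lie inside or outside $H$ (the AC hypothesis on $T(H)$ enters a second time, to identify the inner count with $\dim T(H)=\dim T_0(H)$). You instead cut $T$ into the $k^2$ coset blocks determined by the coset projections (correctly noting these lie in $T$, the block $(0,0)$ projection being the sum of the inner dual idempotents) and compute each block directly from the generators via the absorption identities for $J$; the leftmost/rightmost-crossing reduction and the observation that row-sum vectors of elements of $T(H)$ are class functions (by $H$-conjugation equivariance of the generators) are exactly the right tools, and the block dimensions $\dim T(H)$, $m$, $m$, $1$ sum to the stated formula. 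What your approach buys: it bypasses Tanaka's theorem and in fact never uses the AC hypothesis on either $T(G,\C[G]^H)$ or $T(H)$, so it establishes the dimension formula for any Frobenius group with abelian complement, and it identifies the actual structure of each coset block rather than just a count. What it costs: the absorption bookkeeping you flag is the bulk of the work, whereas the paper's reduction to counting nonzero intersection numbers is immediate once triple regularity is known. The one point to nail down is the one you identify, namely that every path contributing to a given block lands in the claimed subspace ($T(H)$, the Bose--Mesner algebra $M_H$, $R\,\mathbf 1^{\top}$, or $\C J$); this does go through, since a path returning to a coset must cross an even number of ``nonzero net displacement'' steps in $\Z_k$, each excursion collapses to a scalar multiple of $J$, and each of the four target subspaces contains (or absorbs) $J$.
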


\begin{proof}
    By Theorem \ref{thm:sgpclassify}, $T(G,\C[G]^H)$ is AC and $H=G'$. By Theorem \ref{thm:tanaka}, $T(G,\C[G]^H)$ is triply regular, so $ T(G,\C[G]^H)=T_0(G,\C[G]^H)$ and $\dim T_0(G,\C[G]^H)=|\{(i,j,k)\colon p_{ij}^k\neq 0\}|$. We count the number of nonzero $p_{ij}^k$. Let $P_0=g_0^H,\cdots, P_d=g_d^H$ be the principal classes of $\C[G]^H$. The principal sets of $\C[H]^H$ are the principal sets of $\C[G]^H$ contained in $H$. We consider cases.
    
    {\bf Case 1: }$P_i,P_j\subseteq H$. Then $P_i=h_i^H$ and $P_j=h_j^H$ for some $h_i,h_j\in H$. Since $P_i$ and $P_j$ are both principal sets of $\C[H]^H$,  $p_{ij}^k\neq 0$ in $\C[H]^H$ if and only if $p_{ij}^k\neq 0$ in $\C[H]$. Therefore the number of triples $(i,j,k)$ such that $p_{ij}^k\neq 0$ is equal to $\dim T_0(H)$. However, $T(H)$ is AC, so by Theorem \ref{thm:tanaka}, $T(H)=T_0(H)$. Therefore, we have $\dim T(H)$ nonzero triples $(i,j,k)$ in this case.
    
    {\bf Case 2: }$P_i\not\subseteq H$ and $P_j\subseteq H$. Since $P_i\not\subseteq H$, $g_i\not\in H$, so by Proposition \ref{prop:frobcon}, $g_i^H=g_iH$. Therefore, $P_iP_j=g_iHP_j=g_iH$ as $P_j\subseteq H$. Therefore, $p_{ij}^k\neq 0$ if and only if $k=i$ in this case. Thus, for each choice of $P_j\subseteq H$ we have one choice of $k$. There are $m$ principal classes $P_j\subseteq H$. Therefore, for each choice of $i$ there are $m$ different $p_{ij}^k\neq 0$. As $P_i=g_i^H=g_iH$ for all $P_i\not\subseteq H$, the number of distinct choices for $P_i$ is $k-1$, since $|G/H|=k$. Therefore we have $(k-1)m$ different $p_{ij}^k$ in this case.
    
    {\bf Case 3: } $P_i\subseteq H$ and $P_j\not\subseteq H$. As in Case $2$, we get $(k-1)m$ nonzero triples.
    
    {\bf Case 4: } $P_i,P_j\not\subseteq H$.  Since $P_i,P_j\not\subseteq H$, we have $g_i,g_j\not\in H$. Then by Proposition \ref{prop:frobcon}, $g_i^H=g_iH$ and $g_j^H=g_jH$. Fix $P_i$ and consider two subcases.\\
    {\bf Subcase 4.1: }$P_j=P_i^*$. Then $P_j^*=(g_i^{-1})H$ and so $P_iP_j=g_iH(g_i^{-1}H)=H$. Then $p_{ij}^k\neq 0$ for every $P_k\subseteq H$. This is the number of conjugacy classes of $H$. Hence, we have $m$ triples $(i,j,k)$ with $p_{ij}^k\neq 0$ in this case.\\
    {\bf Subcase 4.2: }$P_j\neq P_i^*$. So $g_jH\neq g_i^{-1}H$. Then $P_iP_j=(g_ig_j)H$. As $g_jH\neq g_i^{-1}H$, $g_ig_j\not\in H$. By Proposition \ref{prop:frobcon}, $g_ig_jH=(g_ig_j)^H=P_k$ for some $P_k\not\subseteq H$. There is a unique choice of $k$ such that $p_{ij}^k\neq 0$ in this case. Then for each choice of $P_j\neq P_i^*$, we have one triple $(i,j,k)$ such that $p_{ij}^k\neq 0$. There are a total of $k-1$ choices for $P_j$, one of which has $P_j=P_i^*$. Thus, we have $k-2$ nonzero $(i,j,k)$ in this case.\\
    From these two subcases we see that for each choice of $P_i$ there are $k-2+m$ nonzero triples $(i,j,k)$. There are a total of $k-1$ choices for $P_i$. We thus have $(k-1)(k-2+m)$ different nonzero triples $(i,j,k)$ in this case. Summing gives:
    \[\mathsmaller \dim T(G,\C[G]^H)=\dim T(H)+(k-1)m+(k-1)m+(k-1)(k-2+m)=\dim T(H)+(k-1)(k-2+3m). \qedhere\]
\end{proof}

We note that $\dim T(H)$ in the previous theorem is computed in \cite{Bastian2} for each possible $H$.

\section{Wreath Products}
Suppose $(G,H)$ is a SGP. We let $\mathcal{S}(G,H)$ denote the association scheme corresponding to the Schur ring $\C[G]^H$. For any Frobenius group $G$ with abelian complement, we have by Proposition \ref{prop:frobcon}, that for all $x\in G\setminus G'$, $x^{G'}=xG'$ and $G'$ is the Frobenius kernel of $G$. By Corollary \ref{cor:frobsgp}, $(G,G')$ is a SGP. Let $\Lambda$ be any ordering of $G'$ and let $g_1=e,g_2,\cdots, g_m$ be representatives for $G/G'$. We order $G$ as: $\Lambda, g_2\Lambda,\cdots, g_m\Lambda$. We start by discussing the form of the adjacency matrices in $\mathcal{S}(G,H)$.

\begin{Lem}\label{lem:sgpouter}
    Let $G$ be a Frobenius group with abelian complement. Let $P_k$ be a principal class of $\C[G]^{G'}$ such that $P_k\not\subseteq G'$. Then for any principal classes $P_i,P_j$ of $\C[G]^{G'}$ the $P_i,P_j$ block of $A_k$ is all $0$'s if $P_j\not\subseteq P_iP_k$ and all $1$'s if $P_j\subseteq P_iP_k$.
\end{Lem}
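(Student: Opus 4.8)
The plan is to show that each entry of $A_k$ depends on its row- and column-indices only through their cosets of $G'$, which reduces the claim to a one-line computation in the abelian quotient $G/G'$. First I would fix the convention that, for the relation attached to a principal class $P_\ell$, the adjacency matrix satisfies $(A_\ell)_{x,y}=1$ exactly when $x^{-1}y\in P_\ell$ (because $G/G'$ will turn out to be abelian, the alternative convention $yx^{-1}\in P_\ell$ gives the same conclusion, so the choice is immaterial). Since $G$ is a Frobenius group with abelian complement, Proposition \ref{prop:frobcon} identifies $G'$ with the Frobenius kernel $N$ and gives $x^{G'}=xG'$ for every $x\in G\setminus G'$. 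Choosing any $g_c\in P_k\setminus G'$, the hypothesis $P_k\not\subseteq G'$ then forces $P_k=g_c^{G'}=g_cG'$, a full coset of $G'$.

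Next I would record that every principal class lies inside a single coset of $G'$: a class not contained in $G'$ is itself a coset, while a class contained in $G'$ sits inside the identity coset $G'=g_1G'$. Hence for any $x\in P_i$ and $y\in P_j$ the cosets $xG'$ and $yG'$ depend only on $P_i$ and $P_j$. Writing $\overline{g}=gG'$ and using that $G'\normal G$ while $G/G'$ is abelian, the condition $x^{-1}y\in P_k=g_cG'$ is equivalent to $\overline{y}=\overline{x}\,\overline{g_c}$ in $G/G'$, which depends only on $\overline{x}$ and $\overline{y}$. Therefore $(A_k)_{x,y}$ is constant as $x$ ranges over $P_i$ and $y$ ranges over $P_j$, so the $P_i,P_j$ block of $A_k$ is either all $0$'s or all $1$'s.

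Finally I would match the all-$1$'s condition with $P_j\subseteq P_iP_k$. Because $P_k=g_cG'$ is a full coset and $P_i$ is nonempty, for any fixed $x\in P_i$ one has $xP_k=x g_cG'=\overline{x}\,\overline{g_c}$ (a full coset), and every product $pq$ with $p\in P_i$, $q\in P_k$ lies in this same coset; thus $P_iP_k=\overline{x}\,\overline{g_c}$ as a subset of $G$ (note this treats uniformly the cases $P_i\subseteq G'$ and $P_i\not\subseteq G'$). Since $P_j$ is contained in the single coset $\overline{y}$ and distinct cosets are disjoint, $P_j\subseteq P_iP_k$ holds precisely when $\overline{y}=\overline{x}\,\overline{g_c}$, which is exactly the condition found above for the block to be all $1$'s; otherwise it is all $0$'s. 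The computation is routine, and the only point needing care is the constancy step, namely that $(A_k)_{x,y}$ genuinely factors through the $G'$-cosets of $x$ and $y$; this is where the normality of $G'$ and the commutativity of $G/G'$ are essential, and I expect no substantive obstacle beyond this bookkeeping.
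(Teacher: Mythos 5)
Your proof is correct and follows essentially the same route as the paper's: both use Proposition \ref{prop:frobcon} to write $P_k=gG'$, reduce the adjacency condition to a coset condition, and identify $P_iP_k$ with the coset $xgG'$ so that the block is constant and equals $1$ exactly when $P_j\subseteq P_iP_k$. The only cosmetic differences are your uniform treatment of the cases $P_i\subseteq G'$ and $P_i\not\subseteq G'$ (the paper splits them) and your convention $x^{-1}y\in P_k$ in place of $yx^{-1}\in P_k$, which you correctly note is immaterial here since $G/G'$ is abelian and $P_k$ is a full coset.
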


\begin{proof}
    As $P_k\not\subseteq G'$ and $G$ is a Frobenius group with cyclic complement, $P_k=gG'$ for some $g\in G\setminus G'$ by Proposition \ref{prop:frobcon}. Now suppose $x\in P_i$ and $y\in P_j$. We have $(A_k)_{xy}=1$ if and only if $yx^{-1}\in gG'$, if and only if $y\in gG'x=gxG'$, if and only if $P_j\subseteq gxG'$. Now either $P_i\subseteq G'$ or $P_i=hG'$ for some $h\in G\setminus G'$ by Proposition \ref{prop:frobcon}. If $P_i\subseteq G'$, then $x\in G'$ and $gxG'=gG'=P_iP_k$. If $P_i=hG'$ for some $h\in G\setminus G'$, then $P_i=xG'$ as well. Then $gxG'=P_kP_i=P_iP_k$. Either way $P_iP_k=gxG'$, and so for $x\in P_i$ and $y\in P_j$, $(A_k)_{xy}=1$ if and only if $P_j\subseteq P_iP_k$.
 
    Now suppose the $P_i,P_j$ block of $A_k$ has a nonzero entry, say $(A_k)_{ab}=1$. This implies $P_j\subseteq P_iP_k$, and so for all $x\in P_i$ and $y\in P_j$ that $(A_k)_{xy}=1$. Thus, the $P_i,P_j$ block of $A_k$ is all $1$'s.
\end{proof}

\begin{Lem}\label{lem:sgpcenteral}
    Let $G$ be a Frobenius group with abelian complement. Let $P_k=g^{G'}$ be a principal class of $\C[G]^{G'}$ such that $P_k\subseteq G'$. Let $x^{G'}$ and $y^{G'}$ be two principal classes of $\C[G]^{G'}$. We then have
    \begin{enumerate}[leftmargin=*]
        \item If $x,y\in G'$, then the $x^{G'},y^{G'}$ block of $A_k$ is equal to the $x^{G'},y^{G'}$ block of the adjacency matrix of $g^{G'}$ for the association scheme $\mathcal{G}(G')$.
        \item If $x^H\neq y^H$ with either $x\not\in G'$ or $y\not\in G'$, then the $P_i,P_j$ block of $A_k$ is all $0$.
        \item If $x^H\not\subseteq Z(G)$, then the $x^H,x^H$ block of $A_k$ is equal to the adjacency matrix of $g^{G'}$ for $\mathcal{G}(G')$.
    \end{enumerate}
\end{Lem}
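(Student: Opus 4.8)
The plan is to work directly from the entry convention established in the proof of Lemma~\ref{lem:sgpouter}, namely that $(A_k)_{ab}=1$ exactly when $ba^{-1}\in P_k=g^{G'}$. Since $g\in G'$, the class $g^{G'}$ lies entirely inside $G'$, so every part reduces to deciding, for $a\in x^{G'}$ and $b\in y^{G'}$, whether $ba^{-1}$ lands in the conjugacy class $g^{G'}$ of $G'$. Throughout I would keep in mind the two shapes a principal class can take, by Proposition~\ref{prop:frobcon}: an inner class ($x\in G'$) is a conjugacy class of $G'$ sitting in the identity coset, while an outer class ($x\notin G'$) is a full coset $xG'$. The ordering of $G$ as $\Lambda,g_2\Lambda,\dots,g_m\Lambda$ means that within each coset the rows and columns are labelled by the fixed ordering $\Lambda$ of $G'$.

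For part (1), with $x,y\in G'$ both $a$ and $b$ lie in $G'$, hence $ba^{-1}\in G'$, and the condition $(A_k)_{ab}=1\iff ba^{-1}\in g^{G'}$ is literally the defining condition for the adjacency matrix of the conjugacy class $g^{G'}$ in $\mathcal{G}(G')$; since the ordering $\Lambda$ is shared, the $x^{G'},y^{G'}$ block agrees with the corresponding block of that matrix. For part (2), I would show the hypotheses force $x$ and $y$ into different cosets of $G'$: if $x\notin G'$ and $xG'=yG'$, then by Proposition~\ref{prop:frobcon} both classes equal that common coset, contradicting $x^{G'}\neq y^{G'}$; and if exactly one of $x,y$ lies in $G'$, the cosets are automatically distinct. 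Distinct cosets give $ba^{-1}\notin G'\supseteq g^{G'}$ for all $a\in x^{G'}$, $b\in y^{G'}$, so the block is identically $0$.

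Part (3) is where the real computation lies. Here $x^{G'}$ is an outer class, a full coset $g_iG'$, so the diagonal block is $|G'|\times|G'|$. Writing $a=g_i\lambda$, $b=g_i\mu$ with $\lambda,\mu\in G'$, I would compute $ba^{-1}=g_i(\mu\lambda^{-1})g_i^{-1}$ and use $G'\normal G$ to rewrite the membership test as $\mu\lambda^{-1}\in g_i^{-1}g^{G'}g_i=(g^{g_i})^{G'}$. Because $G'$ is normal, $g_i^{-1}g^{G'}g_i$ is again a single conjugacy class of $G'$, so in the ordering $\Lambda$ the diagonal block is exactly the adjacency matrix of a principal class of $\mathcal{G}(G')$.

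The hard part will be matching this to the adjacency matrix of $g^{G'}$ itself rather than of the conjugate class $(g^{g_i})^{G'}$: the coset representative twists the class by the inner automorphism $\mathrm{conj}_{g_i}$, and in general $(g^{g_i})^{G'}\neq g^{G'}$ (already visible for $G=S_3$, $G'=A_3$, where the outer diagonal block of the class $\{r\}$ is the matrix of $\{r^{2}\}$). I would resolve this by noting that the relabeling $s\mapsto g_i s g_i^{-1}$ is an automorphism of $G'$ carrying $(g^{g_i})^{G'}$ back to $g^{G'}$, so the block is the adjacency matrix of $g^{G'}$ up to this fixed reordering of the fiber --- precisely the freedom used when identifying the scheme with a wreath product in Corollary~\ref{cor:sgpwreath}. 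The role of the hypothesis $x^{G'}\not\subseteq Z(G)$ is mild: since $G$ is Frobenius, $Z(G)=\{e\}$, so it merely rules out the identity class $x=e$; I would confirm as a final bookkeeping step that under the intended (outer) reading the block is a genuine full $|G'|\times|G'|$ conjugation pattern carrying no degeneracy.
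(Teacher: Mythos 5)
Your handling of parts (1) and (2) coincides with the paper's: both arguments run directly off the convention $(A_k)_{ab}=1$ iff $ba^{-1}\in g^{G'}$, and your observation that the hypotheses of (2) force $a$ and $b$ into distinct cosets of $G'$ is exactly the paper's case analysis. The substantive point is part (3). The paper computes, just as you do, that for $a=xh_1$, $b=xh_2$ in an outer coset the condition becomes $h_2h_1^{-1}\in x^{-1}g^{G'}x$, but then asserts $x^{-1}g^{G'}x=g^{G'}$ and concludes literal equality with the adjacency matrix of $g^{G'}$. You are right to distrust that step, and your diagnosis is correct: for $e\neq g\in G'$ in a Frobenius group with kernel $G'$ one has $C_G(g)\leq G'$, so $g^G$ splits into $[G:G']$ distinct $G'$-classes permuted regularly by $G/G'$, whence $x^{-1}g^{G'}x=(g^x)^{G'}\neq g^{G'}$ for every $x\notin G'$. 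Your $S_3$ example is the minimal counterexample: with $P_k=\{r\}$ the outer diagonal block is the adjacency matrix of $\{r^2\}$, the transpose of the inner block. So part (3) as stated is false with the ordering $\Lambda, g_2\Lambda,\dots,g_m\Lambda$ fixed in the paper; what is true, and what you prove, is that the block is the adjacency matrix of the conjugate class $(g^{x})^{G'}$, which becomes that of $g^{G'}$ after relabelling the fiber by $xh\mapsto hx$ (equivalently, applying the automorphism $h\mapsto xhx^{-1}$ of $G'$).

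This is more than a cosmetic quibble, since the error propagates: with the paper's ordering, $A_k$ in Lemma \ref{lem:sgpclass} is block diagonal with the \emph{distinct} blocks $(g^{g_i})^{G'}$ down the diagonal rather than $I_{n/m}\otimes B$, and Corollary \ref{cor:sgpwreath} then gives an isomorphism of association schemes rather than an equality. The clean repair, implicit in your relabelling, is to order each coset as $\Lambda g_i$ instead of $g_i\Lambda$; then $ba^{-1}=\mu\lambda^{-1}$ on the nose and parts (3), Lemma \ref{lem:sgpclass} and Corollary \ref{cor:sgpwreath} hold literally. None of this affects the Terwilliger-algebra conclusions, which only depend on the scheme up to isomorphism. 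Your reading of the hypothesis is also correct: $Z(G)=\{e\}$ for a Frobenius group, so ``$x^H\not\subseteq Z(G)$'' excludes only $x=e$, whereas the case actually being treated (and the only one in which the block has size $|G'|\times|G'|$) is $x\notin G'$; the statement should say so.
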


\begin{proof}
    As $P_k\subseteq G'$, $g\in G'$. First let $x,y\subseteq G'$. Let $(a,b)$ be any entry of the $x^{G'},y^{G'}$ block of $A_k$. We have $a\in x^{G'}\subseteq G'$ and $b\in y^{G'}\subseteq G'$. Notice $(A_k)_{ab}=1$ if and only if $ba^{-1}\in P_k=gG'$ if and only if the $(a,b)$ entry of the adjacency matrix of $g^{G'}$ for $\mathcal{G}(G')$ is nonzero. Furthermore we note as $x^{G'},y^{G'}\subseteq G'$ we have a $x^{G'},y^{G'}$ block of the adjacency matrix for $g^{G'}$ in $\mathcal{G}(G')$. So the $x^{G'},y^{G'}$ block of $A_k$ is equal to the $x^{G'},y^{G'}$ block of the adjacency matrix of $g^{G'}$ for $\mathcal{G}(G')$.  

    Next consider when $x^{G'}\neq y^{G'}$ with either $x\not\in G'$ or $y\not\in G'$. Without loss of generality suppose $x\not\in G'$. By Proposition \ref{prop:frobcon}, $x^{G'}=xG'$. Let $a\in x^{G'}$ and $b\in y^{G'}$. If $(A_k)_{ab}=1$, then $ba^{-1}\in P_k\subseteq G'$. We note as $a\in xG'$, that $a^{-1}\in x^{-1}G'$. If $y\in G'$, then $y^{G'}\subseteq G'$ and $ba^{-1}\in x^{-1}G'$. However, $x^{-1}G'\cap G'=\emptyset$, which contradicts $ba^{-1}\in x^{-1}G'\cap G'$. If $y\not\in G'$, then by Proposition \ref{prop:frobcon}, $y^{G'}=yG'$. Since $x^{G'}\neq y^{G'}$, we have $xG'\neq yG'$, so $yx^{-1}G'\neq G'$. As $b\in yG'$ and $a^{-1}\in x^{-1}G'$, $ba^{-1}\in yx^{-1}G'$. However, $yx^{-1}G'\cap G'=\emptyset$ which contradicts $ba^{-1}\in yx^{-1}G'\cap G'$. In either case we have a contradiction, therefore $(A_k)_{ab}\neq 1$. Thus for $a\in x^{G'}$, $b\in y^{G'}$ we have $(A_k)_{ab}=0$, and the $x^{G'},y^{G'}$ block of $A_k$ is $0$.

    Lastly consider when $x^{G'}=y^{G'}$ with $x\not\in G'$. Since $x\not\in G'$ by Proposition \ref{prop:frobcon}, $x^{G'}=xG'$. Let $a,b\in x^{G'}=xG'$. Say $a=xh_1$ and $b=xh_2$ for $h_1,h_2\in G'$. Then $(A_k)_{ab}=1$ if and only if $ba^{-1}\in g^{G'}$ if and only if $xh_2(xh_1)^{-1}=xh_2h_1^{-1}x^{-1}\in gG'$ if $h_2h_1^{-1}\in x^{-1}gG'x=gG'$. Hence, the $(xh_1,xh_2)$ entry of $A_k$ is nonzero if and only if the $(h_1,h_2)$ entry of the adjacency matrix for $gG'$ in $\mathcal{G}(G')$ is nonzero. Under our ordering of the elements of $G$, notice that the $(xh_1,xh_2)$ entry of the $x^{G'},x^{G'}$ block of $A_k$ is in the same row and column of the $x^{G'},x^{G'}$ block of $A_k$ as the $(h_1,h_2)$ entry of the adjacency matrix for $gG'$ in $\mathcal{G}(G')$. Therefore, the $x^{G'},x^{G'}$ block of $A_k$ is equal to the adjacency matrix of $g^{G'}$ for the association scheme $\mathcal{G}(G')$.   
\end{proof}

\begin{Lem}\label{lem:sgpclass}
 Let $G$ be a Frobenius group with abelian complement. Suppose $|G|=n$ and $|G'|=m$. If $P_k=g^{G'}$ is a principal class of $\C[G]^{G'}$ such that $g\in G'$, then $A_k=I_{n/m}\otimes B$ where $B$ is the adjacency matrix of $g^{G'}$ for the association scheme $\mathcal{G}(G')$.  
\end{Lem}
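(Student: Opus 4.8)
The plan is to read off the block form of $A_k$ directly from the chosen ordering of $G$ and to recognize that block form as the Kronecker product $I_{n/m}\otimes B$. Since $|G/G'|=n/m$, the ordering $\Lambda,g_2\Lambda,\dots,g_{n/m}\Lambda$ partitions the rows and columns of $A_k$ into $n/m$ consecutive blocks of size $m=|G'|$, one per coset $g_iG'$. Thus $A_k$ is an $(n/m)\times(n/m)$ array of $m\times m$ blocks, and the whole lemma reduces to showing that every diagonal block equals $B$ while every off-diagonal block is $0$.

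First I would dispose of the off-diagonal blocks. Because $g\in G'$ and $G'\normal G$, the class $P_k=g^{G'}$ is contained in $G'$, so two elements lying in distinct cosets $g_iG'\neq g_jG'$ can never be $k$-related: the product defining the relation lands in a coset of $G'$ other than $G'$ itself, which is disjoint from $P_k\subseteq G'$. This is precisely Lemma \ref{lem:sgpcenteral}(2), applied to each pair of principal classes making up the cosets $g_iG'$ and $g_jG'$ (for $i\neq j$ at least one of the two cosets differs from $G'$, so at least one representative lies outside $G'$); it gives that the $(i,j)$ block vanishes whenever $i\neq j$.

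Next I would identify each diagonal block with $B$. The diagonal block indexed by $G'$ itself is internally subdivided by the principal classes of $\C[G]^{G'}$ lying inside $G'$, which are exactly the conjugacy classes of $\mathcal{G}(G')$; Lemma \ref{lem:sgpcenteral}(1) matches each such sub-block (whether on- or off-class) with the corresponding sub-block of $B$, so assembling them recovers $B$ on the nose. For an outer coset $g_iG'$, which by Proposition \ref{prop:frobcon} is a single principal class $g_i^{G'}=g_iG'$, Lemma \ref{lem:sgpcenteral}(3) identifies its diagonal block with $B$ directly. The one bookkeeping fact that makes these literal matrix equalities rather than mere isomorphisms is that every coset is listed in the common order $g_i\Lambda$, so the row and column labels of each diagonal block correspond to the entries of $\Lambda$ by the same order-preserving bijection used for $B$.

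Assembling the two steps, $A_k$ is block-diagonal with all $n/m$ diagonal blocks equal to $B$ and all off-diagonal blocks zero, which is exactly $I_{n/m}\otimes B$. The step I expect to be the main obstacle is confirming that the outer-coset diagonal blocks equal $B$ and not merely the adjacency matrix of some other class of $\mathcal{G}(G')$: conjugating the defining relation by the representative $g_i$ could a priori replace $g^{G'}$ by $(g^{g_i})^{G'}$, and it is precisely the Frobenius-with-abelian-complement structure encoded in Lemma \ref{lem:sgpcenteral}(3) that pins the block down to $B$. Everything else is routine block bookkeeping once the coset ordering is fixed.
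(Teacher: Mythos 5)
Your proposal is correct and follows essentially the same route as the paper: decompose $A_k$ into coset blocks under the fixed ordering, kill the off-diagonal blocks via Lemma \ref{lem:sgpcenteral}(2), assemble the $G',G'$ block from Lemma \ref{lem:sgpcenteral}(1), and identify each outer-coset diagonal block with $B$ via Lemma \ref{lem:sgpcenteral}(3). Your closing remark about why the outer diagonal blocks give $B$ itself rather than the adjacency matrix of a conjugated class is exactly the point settled inside the proof of Lemma \ref{lem:sgpcenteral}(3), so nothing further is needed.
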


\begin{proof}
     We consider the blocks of $A_k$ as cosets of $G'$ using the ordering described above. By Proposition \ref{prop:frobcon}, $P_k=g^{G'}=gG'$. The $xG',yG'$ block of $A_k$ is the same as the $x^{G'},y^{G'}$ block for any $x,y\not\in G'$ by Proposition \ref{prop:frobcon}. So in this situation the blocks are the same as the principal class blocks. The only difference is that classes in $G'$ are now being considered as a single block. If $xG'\neq yG'$ for any $x,y\in G$ either $x\not\in G'$ or $y\not\in G'$ in this case. Then by Lemma \ref{lem:sgpcenteral}, the $xG',yG'$ block of $A_k$ is all $0$. Hence, only the diagonal blocks are nonzero. Now consider the $xG',xG'$ block of $A_k$. If $xG'\neq G'$, then this is the same as the $x^{G'},x^{G'}$ block of $A_k$. By Lemma \ref{lem:sgpcenteral}, this block equals $B$. Next we consider the $G',G'$ block of $A_k$. By Lemma \ref{lem:sgpcenteral}, the $x^{G'},y^{G'}$ block of $A_k$ equals the $x^{G'},y^{G'}$ block of $B$ for all $x,y\in G'$. Therefore, the $G',G'$ block of $A_k$ is just $B$.

    Thus $A_k$ is a block diagonal matrix in which each diagonal block is $B$. So, $A_k=I_{n/m}\otimes B$. 
\end{proof}

\begin{Lem}\label{lem:sgpclass'}
 Let $G$ be a Frobenius group with abelian complement. Suppose $|G|=n$ and $|G'|=m$. If $P_k$ is a principal class of $\C[G]^{G'}$ such that $P_k\not\subseteq G'$, then $A_k=D\otimes J_{m}$ where $D$ is the adjacency matrix of the conjugacy class $\{gG'\}$ in $\mathcal{G}(G/G')$.
\end{Lem}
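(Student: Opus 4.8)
The plan is to compute the entries of $A_k$ directly and read off the claimed block structure, mirroring the proof of Lemma \ref{lem:sgpclass} but now producing an all-ones block $J_m$ in place of the fixed matrix $B$ (intuitively, a class inside $G'$ links elements only within a common coset, whereas a class outside $G'$ links entire cosets). Since $P_k \not\subseteq G'$, Proposition \ref{prop:frobcon} gives $P_k = gG'$ for some $g \in G \setminus G'$, and by the definition of the adjacency matrix $(A_k)_{xy} = 1$ exactly when $yx^{-1} \in P_k = gG'$.

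The key step is to show that the condition $yx^{-1} \in gG'$ depends only on the cosets $xG'$ and $yG'$, not on the chosen representatives. If $x' = xh_1$ and $y' = yh_2$ with $h_1, h_2 \in G'$, then using $G' \normal G$ one rewrites $y'x'^{-1} = yx^{-1}h'$ with $h' = x(h_2h_1^{-1})x^{-1} \in G'$, so $yx^{-1} \in gG'$ forces $y'x'^{-1} \in gG'G' = gG'$ (and the converse is identical). Hence, under the ordering of $G$ by cosets of $G'$, every $m \times m$ block of $A_k$ indexed by a pair of cosets $(xG', yG')$ is constant: it is $J_m$ when $yx^{-1} \in gG'$ and the zero matrix otherwise.

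It then remains to identify this pattern of blocks with the adjacency matrix $D$ of $\{gG'\}$ in $\mathcal{G}(G/G')$. In the group scheme of the abelian quotient $G/G'$, the entry $D_{xG', yG'}$ equals $1$ precisely when $(yG')(xG')^{-1} = gG'$, i.e. when $yx^{-1} \in gG'$, which is exactly the condition selecting the $J_m$ blocks of $A_k$. Taking the rows and columns of $D$ in the same coset order $g_1G', g_2G', \dots$ used to order $G$, I would conclude $A_k = D \otimes J_m$.

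I expect the only real subtlety to be bookkeeping rather than mathematics: one must work with coset blocks rather than the finer principal-class blocks of Lemma \ref{lem:sgpouter}, since the identity coset $G'$ is itself a union of several principal classes of $\C[G]^{G'}$. Computing $(A_k)_{xy}$ directly from the definition sidesteps this difficulty, because the representative-independence argument above shows that each full coset block of $A_k$ is uniformly $J_m$ or uniformly zero, including those blocks that meet $G'$.
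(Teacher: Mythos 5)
Your proof is correct, but it is organized differently from the paper's. The paper's proof leans on Lemma \ref{lem:sgpouter}, which gives block-constancy only at the level of \emph{principal-class} blocks, and must then run a three-way case analysis ($P_i,P_j$ inside or outside $G'$) and ``take the union'' over the several principal classes contained in $G'$ to assemble the full coset blocks; the identification of the resulting $0/1$ pattern with $D$ is done at the end, much as you do it. You instead prove block-constancy directly at the \emph{coset} level: writing $x'=xh_1$, $y'=yh_2$ with $h_1,h_2\in G'$ and using normality of $G'$ to get $y'x'^{-1}=yx^{-1}\cdot x(h_2h_1^{-1})x^{-1}\in yx^{-1}G'$, so that the condition $(A_k)_{xy}=1 \Leftrightarrow yx^{-1}\in gG'$ depends only on the pair of cosets. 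This single computation makes every $m\times m$ coset block uniformly $J_m$ or $0$ and renders the case analysis (and Lemma \ref{lem:sgpouter} itself) unnecessary for this lemma; you correctly flag that the only subtlety in the paper's route --- the identity coset $G'$ being a union of several principal classes --- simply does not arise in yours. The paper's approach has the mild advantage of reusing Lemma \ref{lem:sgpouter}, which it has already established and which is phrased in the principal-class language used elsewhere; your argument is shorter, more self-contained, and in fact never uses that $G$ is Frobenius beyond the single application of Proposition \ref{prop:frobcon} giving $P_k=gG'$.
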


\begin{proof}
     Let $P_k=g^{G'}$ and notice as $g\not\in G'$ that $g^{G'}=gG'$ by Proposition \ref{prop:frobcon}. Let us consider the blocks of $A_k$ as cosets of $G'$ using the ordering described above. In doing so, the $xG',yG'$ block of $A_k$ is the same as the $x^{G'},y^{G'}$ block for $x,y\not\in G'$. So in this regard the blocks are the same as the principal class blocks. The only difference is that classes in $G'$ are now being considered as a single block. From Lemma \ref{lem:sgpouter}, the $P_i,P_j$ block of $A_k$ is all $1$'s if $P_j\subset P_iP_k$ and is all $0$'s otherwise. 
     
     For any $P_i\subset G'$, the $P_i,P_j$ block of $A_k$ is nonzero if and only if $P_j\subset P_iP_k=gG'=P_k$ if and only if $P_j=P_k$. Hence, the $P_i,P_j$ block is nonzero if and only if $P_j=gG'$. This is true for all $P_i\subset G'$. Then taking the union over all of the $P_i\subset G'$, the $G',gG'$ block of $A_k$ is all $1$'s and the $G',yG'$ block for $yG'\neq gG'$ is all $0$'s.

     Next consider when $P_j\subset G'$, $P_j=x^{G'}$, $x\in G'$. The $P_i,P_j$ block of $A_k$ is nonzero if and only if $P_j\subset P_iP_k$ if and only if $x\in P_iP_k$. Now if $P_i\subset G'$ we have $P_iP_k=gG'=P_k$. Since $x\in G'$, we have $x\not\in gG'$. So for all $P_j\subset G'$, the $P_i,P_j$ block of $A_k$ is zero. When $P_i=yG'$ we have $x\in P_iP_k=(yG')(gG')=ygG'$ if and only if $ygG'=G'$, if and only if $yG'=g^{-1}G'$. Hence, in this case the $P_i,P_j$ block of $A_k$ is all $1$'s if and only if $P_i=g^{-1}G'$. This is true for all $P_j\subset G'$. Then taking the union over all of the $P_j\subset G'$, the $g^{-1}G',G'$ block of $A_k$ is all $1$'s and the $yG',G'$ block for $yG'\neq g^{-1}G'$ is all $0$'s. 

     We now consider the case where both $P_i,P_j\not\subset G'$. Say $P_i=x^{G'}$ and $P_j=y^{G'}$. By Proposition \ref{prop:frobcon}, $P_i=xG'$ and $P_j=yG'$. From Lemma \ref{lem:sgpouter}, the $xG',yG'$ block of $A_k$ is all $1$'s if $P_j\subset P_iP_k$ and is all $0$'s otherwise. Then the block is all $1$'s if $yG'\subset (xG')(gG')=xgG'$ if and only if $yG'=xgG'=gxG'$, since $G/G'$ is abelian. Thus the $xG',gxG'$ block of $A_k$ is all $1$'s for all $x\not\in G'$ and the $xG',yG'$ block of $A_k$ is all $0$'s if $yG'\neq gxG'$. 

     From these three cases we see that the $xG',gxG'$ block of $A_k$ is all $1$'s for all $x\in G$ and is all $0$'s otherwise. Then $A_k=D\otimes J_m$ where $D$ is the $(n/m)\times (n/m)$ matrix that is nonzero in the $(x,gx)$ entry for all $x\in G$ and is $0$ otherwise. Notice in $\mathcal{G}(G/G')$ the adjacency matrix $A$ for the class $\{gG'\}$ is nonzero in the $(xG',yG')$ entry if and only if $yx^{-1}G'=gG'$ if and only if $y=gxG'$. Hence, $A_{xG',yG'}=1$ if and only if $y=gxG'$ if and only if $D$ is nonzero in the (x,gx) entry. Hence, $D$ is the adjacency matrix for $\{gG'\}$ when considered as a conjugacy class in $G/G'$.
\end{proof}

\begin{Cor}\label{cor:sgpwreath}
     Let $G$ be a Frobenius group with abelian complement. Suppose $|G|=n$ and $|G'|=m$. Then $\mathcal{S}(G,G')=\mathcal{G}(G')\wr \mathcal{G}(G/G')$.
\end{Cor}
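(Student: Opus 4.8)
The plan is to read off the adjacency matrices of $\mathcal{S}(G,G')$ directly from Lemmas \ref{lem:sgpclass} and \ref{lem:sgpclass'} and to check that they are precisely the adjacency matrices of the wreath product $\mathcal{G}(G')\wr \mathcal{G}(G/G')$ under the ordering of $G$ fixed before the lemmas. First I would recall the matrix description of a wreath product: writing $\{B_i\}$ for the adjacency matrices of $\mathcal{G}(G')$ (on $m=|G'|$ points) and $\{D_j\}$ for those of $\mathcal{G}(G/G')$ (on $n/m$ points), the relations of $\mathcal{G}(G')\wr \mathcal{G}(G/G')$ on the $n$-point set $(G/G')\times G'$ are exactly the matrices $I_{n/m}\otimes B_i$ (the fiber scheme $\mathcal{G}(G')$, repeated on each coset) together with $D_j\otimes J_m$ for each non-identity $D_j$ (the base scheme $\mathcal{G}(G/G')$, connecting whole cosets). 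The ordering $\Lambda, g_2\Lambda,\ldots, g_m\Lambda$ of $G$ is precisely the identification of $G$ with $(G/G')\times G'$, with the coset as the block index and the within-coset element as the fiber index, that makes these Kronecker products meaningful.

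Next I would set up a bijection between the principal classes of $\C[G]^{G'}$ and the relations of the wreath product, splitting according to whether a class lies in $G'$. Every principal class $x^{G'}$ is either entirely inside $G'$ (when $x\in G'$) or entirely outside it, so this dichotomy is exhaustive. For $x\in G'$ the class $x^{G'}$ is by definition the $G'$-conjugacy class of $x$, so the principal classes contained in $G'$ are exactly the conjugacy classes of $G'$, i.e. the relations of $\mathcal{G}(G')$. For $x\notin G'$, Proposition \ref{prop:frobcon} gives $x^{G'}=xG'$, so the principal classes not contained in $G'$ are exactly the nontrivial cosets of $G'$; since $G/G'$ is abelian, each such coset is its own conjugacy class in $G/G'$, and these classes biject with the non-identity relations of $\mathcal{G}(G/G')$. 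The identity class $\{e\}$ falls under the first case and matches $I_{n/m}\otimes I_m=I_n$, so no relation is double-counted and the counts $(d_1+1)+d_2$ agree on both sides.

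With the bijection in hand, the matching of matrices is immediate. For a principal class $P_k\subseteq G'$, Lemma \ref{lem:sgpclass} gives $A_k=I_{n/m}\otimes B$ with $B$ the $\mathcal{G}(G')$-adjacency matrix of that conjugacy class, which is exactly a fiber relation of the wreath product. For $P_k\not\subseteq G'$, Lemma \ref{lem:sgpclass'} gives $A_k=D\otimes J_m$ with $D$ the $\mathcal{G}(G/G')$-adjacency matrix of the corresponding coset, which is exactly a base relation. Since the two families of adjacency matrices coincide and both index sets are exhausted, the schemes $\mathcal{S}(G,G')$ and $\mathcal{G}(G')\wr \mathcal{G}(G/G')$ are equal.

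I expect the only real friction to be bookkeeping rather than mathematics: pinning down the wreath-product convention (which factor is the fiber and which is the base) so that the Kronecker orientations $I_{n/m}\otimes B$ and $D\otimes J_m$ match the standard definition, and confirming that the identity coset $G'$ is absorbed into the fiber scheme rather than contributing a separate base relation. All of the genuine structural work, namely computing the block form of each $A_k$, has already been carried out in the three preceding lemmas, so the corollary is essentially an assembly step.
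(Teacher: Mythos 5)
Your proposal is correct and follows essentially the same route as the paper: both proofs identify the principal classes inside $G'$ with the relations of $\mathcal{G}(G')$ and those outside $G'$ with the nontrivial relations of $\mathcal{G}(G/G')$, then invoke Lemmas \ref{lem:sgpclass} and \ref{lem:sgpclass'} to match the adjacency matrices $I_{n/m}\otimes B$ and $D\otimes J_m$ with those of the wreath product. Your additional care with the bijection of index sets and the placement of the identity class is a welcome tightening of the same argument, not a different one.
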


\begin{proof}
    Let $A_0,A_1,\cdots, A_{r}$ be the adjacency matrices of $\mathcal{G}(G')$ and $B_0,B_1,\cdots, B_{n/m}$ be the adjacency matrices of $\mathcal{G}(G/G')$. The adjacency matrices of $\mathcal{G}(G')\wr \mathcal{G}(G/G')$ are just $D_0=I_{p^{n/m}}\otimes I_{m}=I_{n}$, $D_k=I_{{n/m}}\otimes A_k$ where $1\leq k\leq r$, and $D_{r+k}=B_k\otimes J_{m}$ for $1\leq k\leq n/m$.  From Lemma \ref{lem:sgpclass}, all adjacency matrices $M_k$ of $\mathcal{S}(G,G')$ for some class $P_k=g^{G'}\subset G'$ are of the form $M_k=I_{p^{n/m}}\otimes A_k$ where $A_k$ is the adjacency matrix for $g^{G'}$ in $G'$. Ranging over all $P_k\subset G'$ we have $D_i=M_i$ for all $0\leq i\leq r$. From Lemma \ref{lem:sgpclass'}, all adjacency matrices $M_k$ of $\mathcal{S}(G,G')$ for some class $P_k=g^{G'}$ outside of $G'$ are of the form $M_k=B_\ell\otimes J_{m}$ where $B_\ell$ is the adjacency matrix for $\{gG'\}$ in $G/Z(G)$. We then have a one-to-one correspondence between the adjacency matrices $M_k$ of $\mathcal{S}(G,G')$ for $P_k\not\subseteq G'$ and the adjacency matrices of the form $D_{r+k}=B_k\otimes J_{m}$ for $1\leq k\leq n/m$ in $\mathcal{G}(G')\wr \mathcal{G}(G/G')$.  

    We then see that $\mathcal{S}(G,G')$ and $\mathcal{G}(G')\wr \mathcal{G}(G/G')$ have the exact same adjacency matrices. Therefore, $\mathcal{S}(G,G')=\mathcal{G}(G')\wr \mathcal{G}(G/G')$.    
\end{proof}

\printbibliography

\end{document}